\documentclass[smallextended]{svjour3}
\smartqed  % flush right qed marks, e.g. at end of proof
\usepackage{graphicx}
%
% \usepackage{mathptmx}      % use Times fonts if available on your TeX system
%
% insert here the call for the packages your document requires
%\usepackage{latexsym}

\usepackage{graphicx}
\usepackage{amsmath}
\usepackage{amsfonts}
\usepackage{amssymb}
\usepackage{mathrsfs}

\usepackage{caption}
\usepackage{subcaption} % for multiple floats

%% numbering

\numberwithin{equation}{section}
\numberwithin{figure}{section}
\numberwithin{table}{section}
\numberwithin{theorem}{section}
\numberwithin{lemma}{section}
\numberwithin{proposition}{section}
\numberwithin{corollary}{section}
\numberwithin{definition}{section}
\numberwithin{remark}{section}

%% use hyperfer
\usepackage{color}
\definecolor{newblue}{RGB}{94,89,144}
\definecolor{newblue2}{cmyk}{1,0.6,0,0.06}
\usepackage[pdfborder={0 0 0},
  colorlinks=true,
  citecolor=newblue,
  linkcolor=newblue2,
  urlcolor=newblue]{hyperref}

% clever referencing
\usepackage{xifthen}

\newcommand{\myref}[2][]{%
  \ifthenelse{\isempty{#1}}%
  {\ref{#2}}% if #1 is empty
  {\hyperref[#2]{#1~\ref*{#2}}}% if #1 is not empty
}

\newcommand{\mycite}[2]{\textnormal{\cite[#1]{#2}}}

%% define short keys

\newcommand{\dd}{\,\mathrm{d}}
\newcommand{\abs}[1]{\left|#1\right|}
\newcommand{\norm}[1]{\left\|#1\right\|}
\newcommand{\R}{\mathbb{R}}
\newcommand{\N}{\mathcal{N}}
\newcommand{\A}{\mathcal{A}}
\newcommand{\B}{\mathcal{B}}
\newcommand{\E}{\mathcal{E}}
\newcommand{\M}{\mathcal{M}}
\renewcommand{\S}{\mathcal{S}}
\newcommand{\G}{\mathcal{G}}

\newcommand{\T}{\mathscr{T}}

\renewcommand{\H}{\mathcal{H}}

\renewcommand{\Psi}{\mathcal{F}}

\DeclareMathOperator*{\esssup}{ess\, sup}
\DeclareMathOperator{\diam}{diam}
\DeclareMathOperator{\trace}{trace}

\newcommand{\md}{\partial^\bullet}
\newcommand{\nd}{\partial^\circ}
\newcommand{\Rh}{\mathcal{R}_h}

\newcommand{\eps}{\varepsilon}
\newcommand{\weakto}{\rightharpoonup}
\newcommand{\id}{\mathrm{Id}}

\renewcommand{\tilde}{\widetilde}

\newcommand{\dt}{\frac{\mathrm{d}}{\mathrm{d}t}}
\newcommand{\ds}{\frac{\mathrm{d}}{\mathrm{d}s}}

%\usepackage{framed}
%\definecolor{shadecolor}{cmyk}{0.0,0.1,0.1,0.0}

%\usepackage{showframe}
%\newcommand\ShowFrameColor{red}

% etc.
%
% please place your own definitions here and don't use \def but
% \newcommand{}{}
%
% Insert the name of "your journal" with
\journalname{Numerische Mathematik}
\begin{document}

\title{Evolving surface finite element method for the Cahn-Hilliard equation%
  \thanks{%
    The work of C.\ M.\ Elliott was supported by the UK Engineering and Physical Sciences Research Council EPSRC Grant EP/G010404 and
    the work of T.\ Ranner was supported by a EPSRC Ph.D. studentship (Grant EP/P504333/1 and EP/P50516X/1) and the Warwick Impact Fund.
  }
}
%\subtitle{Do you have a subtitle?\\ If so, write it here}

%\titlerunning{Short form of title}        % if too long for running head

\author{Charles M. Elliott         \and
        Thomas Ranner %etc.
}

%\authorrunning{Short form of author list} % if too long for running head

\institute{C.M.Elliott \at
              Mathematics Institute, Zeeman Building, University of Warwick, Coventry. CV4 7AL. UK \\
              Tel.: +44 (0)24 7615 0773\\
%              Fax: +123-45-678910\\
              \email{C.M.Elliott@warwick.ac.uk}           %  \\
%             \emph{Present address:} of F. Author  %  if needed
           \and
           T. Ranner \at
              Mathematics Institute, Zeeman Building, University of Warwick, Coventry. CV4 7AL. UK \\
%              Tel.: +44 (0)24 7615 0773\\
%              Fax: +123-45-678910\\
              \emph{Present address:} %
              School of Computing, University of Leeds. LS2 9JT. UK \\
              E-mail: T.Ranner@leeds.ac.uk
            }

\date{The final publication is available at springerlink.com. DOI: 10.1007/s00211-014-0644-y}
% The correct dates will be entered by the editor

\maketitle

\begin{abstract}
We use the evolving surface finite element method to solve a Cahn-Hilliard equation on an evolving surface with prescribed velocity. We start by deriving the equation using a conservation law and appropriate transport formulae and provide the necessary functional analytic setting. The finite element method relies on evolving an initial triangulation by moving the nodes according to the prescribed velocity. We go on to show a rigorous well-posedness result for the continuous equations by showing convergence, along a subsequence, of the finite element scheme. We conclude the paper by deriving error estimates and present various numerical examples.
\keywords{Evolving surface finite element method \and %
Cahn-Hilliard equation \and %
triangulated surfaces \and %
error analysis}
% \PACS{PACS code1 \and PACS code2 \and more}
% \subclass{MSC code1 \and MSC code2 \and more}
\end{abstract}

%--- introduction -----------------------------------------------------------%           
                                                                                         
\section{Introduction}
\label{sec:introduction-1}

In this paper, we will study a Cahn-Hilliard equation posed on an evolving surface with prescribed velocity. The key methodology is to discretise the equations using the evolving surface finite element method \cite{DziEll07} originally proposed for a surface heat equation. The idea is to take a triangulation of the initial surface and evolve the nodes along the velocity field. This leads to a family of discrete surfaces on which we can pose a variational form of the Cahn-Hilliard equation.

There are two key results in this paper: first, we show well posedness of the continuous scheme and, second, we show convergence of a finite element scheme. The well posedness result is proven by rigorously showing convergence, along a subsequence, of the discrete scheme. In contrast to the planar setting, there are extra difficulties in this work since the classical Bochner space set-up is unavailable to us. The finite element method is analysed under the assumption of higher regularity of the solution and shown to converge to the true solution quadratically with respect to the mesh size in an $L^2$ norm. The paper concludes with some numerical examples to show various properties of the methodology.

%--- equations --------------------------------------------------------------%

\subsection{The Cahn-Hilliard equation}

We assume we are given an evolving surface $\{ \Gamma(t) \}$, for $t \in [0,T]$, which evolves according to a given underlying velocity field $v$ which can be decomposed into normal $(v_\nu)$ and tangential components $(v_\tau)$ so that $v = v_\nu + v_\tau$. We seek a solution $u$ of
\begin{equation}
  \label{eq:111}
  \md u + u \nabla_\Gamma \cdot v = \Delta_\Gamma \left( -\eps \Delta_\Gamma u + \frac1\eps \psi'(u) \right) \quad \mbox{ on } \bigcup_{t \in (0,T)} \Gamma(t) \times \{ t \}
\end{equation}
subject to the initial condition
\begin{equation}
  \label{eq:112}
  u( \cdot, 0 ) = u_0 \quad \mbox{ on } \Gamma(0) = \Gamma_0.
\end{equation}
Here $\md u$ denotes the material derivative of $u$ and $\Delta_\Gamma u$ the Laplace-Beltrami operator of $u$. The function $\psi$ is a double well potential, which we will take to be given by
\begin{equation}
  \label{eq:113}
  \psi( z ) = \frac14 ( z^2 - 1 )^2.
\end{equation}

The behaviour of the Cahn-Hilliard equation in the planar case is well studied \cite{Ell89}. Extra effects such as spatial or concentration dependent mobilities or more physically realistic potentials could also be solved with similar methods to those suggested in this paper. Such considerations are left for future work.

This Cahn-Hilliard equation is a simplification of the model for surface dissolution set out in \cite{EilEll08,ErlAziKar01} arising from a conservation law. The model \cite{MerPtaKuh12} takes a different approach and considers a gradient flow for an energy consisting of the sum of the Ginzburg-Landau functional and a Helfrich energy on a stationary surface. One could alternatively couple the evolution of the surface to the surface field $u$ and recover a gradient flow of the Ginzburg-Landau functional \cite{EllSti10b,EllSti10a}.

The results in this work can be seen as a generalisation of the work of \cite{DuJuTia09} to evolving surfaces. That work considers a fully discrete approximation of a Cahn-Hilliard equation posed on a two-dimensional stationary surface with boundary (with a zero Dirichlet boundary condition) under the assumption $u_0 \in H^1_0(\Gamma) \cap H^2(\Gamma)$ and $\Delta_\Gamma u_0 \in H^1_0(\Gamma) \cap W^{1,2+\gamma}(\Gamma)$ for $\gamma \in (0,1)$. Their method uses a triangulated surface for the spatial discretisation and a Crank-Nicolson scheme in time. They show an error estimate of the form
\begin{equation*}
  \max_m \norm{ u_h^m - u^{-\ell}( t_m ) }_{L^2(\Gamma_h)} \le c ( h^2 + \tau^2 ),
\end{equation*}
where $0 = t_0 < t_1 < \ldots < t_m < \ldots < t_M = T$ is a partition of time with fixed time step $\tau$ and $u^{-\ell}$ is the inverse lift \eqref{eq:76} of the continuous solution $u$. 

%--- outline of paper -------------------------------------------------------%

\subsection{Outline of paper}

The paper is laid out as follows. In Section two, we will derive a Cahn-Hilliard equation on an evolving surface using a local conservation law. We introduce the notation for partial differential equations on evolving surfaces taken from \cite{DecDziEll05,DziEll13a} and state any assumptions on the smoothness of the surfaces and its evolution we require. The third section introduces a finite element discretisation of the continuous equations. We describe the process of triangulating an evolving surface and how we formulate the space discrete-time continuous problem as a system of ordinary differential equations. This section is completed by showing some domain perturbation results relating geometric quantities on the discrete and smooth surfaces. Well posedness of the continuous equations is addressed in the fourth section. An existence result is achieved by showing convergence, along a subsequence, of the discrete solutions as the mesh size tends to zero. In Section five, we analyse the errors introduced by our finite element scheme and go on to show an optimal order error estimate. Some numerical experiments are shown in the sixth section backing up the analytical results.

We will use a Gronwall inequality as a standard tool in the analysis which leads to exponential dependence on $\eps$ in most bounds. We are not interested in taking $\eps \to 0$ in this work so will simply write $c_\eps$ for a generic constant which depends on $\eps$.

%--- derivation of continuous equations -------------------------------------%           
                                                                                         
\section{Derivation of continuous equations}

In this section, we will derive a Cahn-Hilliard equation on an evolving surface as a conservative advection-diffusion equation. We will also introduce functional analytic setting and definition of solution that will be used.

\subsection{Assumptions on the evolving surface}
\label{sec:ch-notation}

Given a final time $T>0$, for each time $t \in [0,T]$, we write $\Gamma(t)$ for a compact, smooth, connected $n$-dimensional hypersurface in $\R^{n+1}$ for $n = 1,2$ or $3$ and $\Gamma_0 = \Gamma(0)$. We assume that $\Gamma(t)$ is the boundary of an open, bounded domain $\Omega(t)$. It follows that $\Gamma(t)$ admits a description as the zero level set of a signed distance function $d(\cdot, t) \colon \R^{n+1} \to \R$ so that $d(\cdot, t) < 0$ in $\Omega(t)$ and $d(\cdot, t) > 0$ in $\bar{\Omega}(t)^c$. We denote by $\G_T$ for the space-time domain given by
\begin{equation}
  \label{eq:99}
  \G_T = \bigcup_{t \in [0,T]} \Gamma(t) \times \{ t \}.
\end{equation}

\begin{figure}
  \centering
  \includegraphics{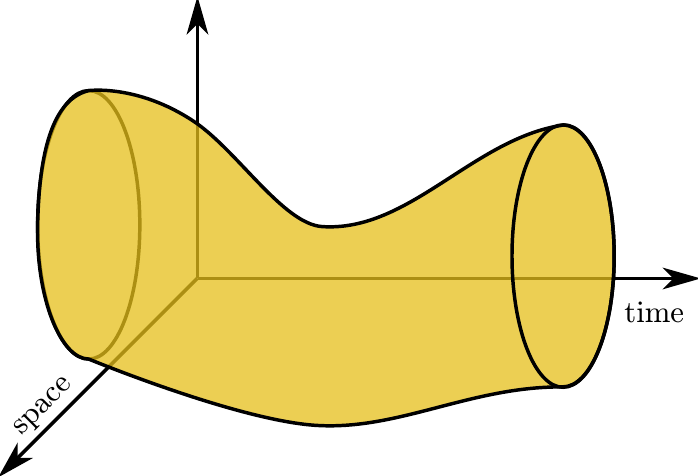}
  \caption[Sketch of space-time domain $\G_T$]{A sketch of the space-time domain $\G_T$.}
\end{figure}

For our analysis, it is sufficient to consider $d(\cdot, t)$ locally to $\Gamma(t)$. We restrict our considerations to $\N(t)$, an open neighbourhood of $\Gamma(t)$. We choose $\N(t)$ so that $\abs{ \nabla d(x,t) } \neq 0$ for $x \in \N(t)$ and assume that
\begin{equation*}
  d, d_t, d_{x_i}, d_{x_i x_j} \in C^2(\N_T) \quad \mbox{ for } i,j=1,\ldots,n+1;
\end{equation*}
here $\N_T = \bigcup_{t \in [0,T]} \N(t) \times \{ t \}$. The orientation of $\Gamma(t)$ is fixed by choosing $\nu$ as the outward pointing normal, so that $\nu(x,t) = \nabla d(x,t)$. For $(x,t) \in \G_T$, we denote $P = P(x,t)$ the projection operator onto the tangent space $T_x \Gamma(t)$, given by $P_{ij}(x,t) = \delta_{ij} - \nu_i(x,t) \nu_j(x,t)$ and by $\H = \H(x,t)$ the (extended) Weingarten map (or shape operator),
\begin{equation*}
  \H_{ij}(x,t) = (\nu_i(x,t) )_{x_j} = d_{x_i x_j}(x,t).
\end{equation*}
We will use the fact that $P \H = \H P = \H$. Finally, we denote by $H = H(x,t)$ the mean curvature of $\Gamma(t)$
\begin{equation*}
  H(x,t) = \trace \H(x,t) = \sum_{i=1}^{n+1} \H_{ii}(x,t).
\end{equation*}

For a function $\eta \colon \Gamma(t) \to \R$, we define its tangential gradient $\nabla_\Gamma \eta$ by
\begin{equation*}
  \nabla_\Gamma \eta = \nabla \tilde\eta - \nabla \tilde\eta \cdot \nu \nu = P \nabla \tilde\eta,
\end{equation*}
where $\tilde\eta$ is a smooth extension of $\eta$ away from $\Gamma(t)$. It can be shown that this definition is independent of the choice of extension. We denote the $n+1$ components of $\nabla_\Gamma \eta$ by
\begin{equation*}
  \nabla_\Gamma \eta = ( \underline{D}_1 \eta, \ldots, \underline{D}_{n+1} \eta ).
\end{equation*}
The Laplace-Beltrami operator is given by
\begin{equation*}
  \Delta_\Gamma \eta = \nabla_\Gamma \cdot \nabla_\Gamma \eta = \sum_{j=1}^{n+1} \underline{D}_j \underline{D}_j \eta.
\end{equation*}

We will denote by $\mathrm{d} \sigma$ the surface measure on $\Gamma(t)$ which admits the following formula for partial integration for a portion $\M(t) \subseteq \Gamma(t)$ \cite[Theorem~2.10]{DziEll13a}:
\begin{equation}
  \label{eq:int-by-parts}
  \int_{\M(t)} \nabla_\Gamma \eta \dd \sigma = \int_{\M(t)} \eta H \nu \dd \sigma + \int_{\partial \M(t)} \eta \mu \dd \sigma,
\end{equation}
where $\mu$ is the co-normal to $\partial \M(t)$ which is normal to $\partial \M(t)$ but tangent to $\Gamma(t)$. If $\M(t) = \Gamma(t)$ and has no boundary, the boundary term vanishes. Furthermore, we have a Green's formula on $\Gamma(t)$ \cite[Theorem~2.14]{DziEll13a}:
\begin{equation}
  \label{eq:greens-form}
  \int_{\Gamma(t)} \nabla_\Gamma \eta \cdot \nabla_\Gamma \varphi \dd \sigma
  = - \int_{\Gamma(t)} \varphi \Delta_\Gamma \eta \dd \sigma.
\end{equation}

These formulae allow the definition of weak derivatives and Sobolev spaces. We define the space $W^{1,q}(\Gamma(t))$ by
\begin{equation*}
  W^{1,q}(\Gamma(t)) := \left\{ \eta \in L^q(\Gamma(t)) : \underline{D}_j \eta \in L^q(\Gamma(t)) \mbox{ for } j = 1,\ldots,n+1 \right\},
\end{equation*}
with norm
\begin{equation*}
  \norm{ \eta }_{W^{1,q}(\Gamma(t))} = \left( \norm{ \eta }_{L^q(\Gamma(t))}^q + \norm{ \nabla_\Gamma \eta }_{L^q(\Gamma(t))}^q \right)^{\frac1q}.
\end{equation*}
This can be easily extended to higher order spaces. See \cite{DziEll13a} for details. We will use the notation $H^k(\Gamma(t))$ for $W^{k,2}(\Gamma(t))$.

We will make use of the following Sobolev embeddings:
\begin{lemma}
  [\mycite{Theorems~2.5 and 2.6}{Heb00}]
  \label{lem:sobolev-embed}
  For $\Gamma(t)$ as above, we have
  \begin{align}
    W^{1,q}(\Gamma(t)) \subset
    \begin{cases} 
      L^{ nq / ( n-q )}( \Gamma(t) ) & \mbox{ for }  q < n \\
      C^0( \Gamma(t) ) & \mbox{ for } q > n.
    \end{cases}
  \end{align}
  Furthermore there exists a constant $c = c(n,q)$, independent of $t$, such that for any $\eta \in W^{1,q}(\Gamma(t))$,
  \begin{subequations}
    \begin{align}
      \label{eq:128}
      \norm{ \eta }_{L^{n q / (n-q)}(\Gamma(t))} & \le c \norm{ \eta }_{W^{1,q}(\Gamma(t))} && \mbox{ for } q < n \\
      \norm{ \eta }_{L^\infty(\Gamma(t))} & \le c \norm{ \eta }_{W^{1,q}(\Gamma(t))} && \mbox{ for } q > n.
    \end{align}
  \end{subequations}
\end{lemma}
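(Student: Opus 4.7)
The non-uniform version of both embeddings is simply the standard Sobolev embedding theorem on the compact smooth $n$-dimensional Riemannian manifold $\Gamma(t)$, which is exactly the content of Theorems 2.5 and 2.6 of Hebey applied to each time slice individually. The only additional content that the lemma asserts, and therefore the only thing that needs to be argued, is that the constant may be chosen independently of $t \in [0,T]$.

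To obtain this uniformity I would reduce everything to the fixed reference surface $\Gamma_0$. The hypotheses on the signed distance function, namely $d, d_t, d_{x_i}, d_{x_i x_j} \in C^2(\N_T)$ together with $\abs{\nabla d} \neq 0$ on $\N_T$, allow the construction of a smooth family of diffeomorphisms $\Phi_t \colon \Gamma_0 \to \Gamma(t)$; one natural choice is the map obtained by flowing points of $\Gamma_0$ along the prescribed velocity field $v$, but the nearest-point projection induced by $d$ would work identically. Because $[0,T]$ is compact and the dependence on $t$ is $C^2$, both $\Phi_t$ and $\Phi_t^{-1}$, together with their first derivatives and their surface Jacobians, are bounded above and bounded away from zero by positive constants that depend only on the data of the evolution.

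These bounds yield the norm equivalence
\[
  c_1 \norm{ \tilde\eta }_{W^{1,q}(\Gamma_0)} \le \norm{ \tilde\eta \circ \Phi_t^{-1} }_{W^{1,q}(\Gamma(t))} \le c_2 \norm{ \tilde\eta }_{W^{1,q}(\Gamma_0)},
\]
and analogous equivalences for every $L^p$ norm, with $c_1, c_2$ independent of $t$. Writing a given $\eta \in W^{1,q}(\Gamma(t))$ as $\eta = \tilde\eta \circ \Phi_t^{-1}$ with $\tilde\eta := \eta \circ \Phi_t \in W^{1,q}(\Gamma_0)$, applying the standard Sobolev embedding on the single fixed manifold $\Gamma_0$ to $\tilde\eta$, and unwinding the equivalences produces \eqref{eq:128} and its $L^\infty$ counterpart with a constant that absorbs $c_1, c_2$ and the embedding constant on $\Gamma_0$ but retains no $t$-dependence.

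The main obstacle is essentially only the uniform bi-Lipschitz control of $\Phi_t$, which is precisely where the assumed regularity of $d$ on $\N_T$ and the compactness of $[0,T]$ enter. Once this is in hand, the embedding statement transfers mechanically from Hebey's theorem on the fixed reference manifold through the change of variables.
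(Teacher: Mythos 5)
The paper states this lemma purely as a citation of Hebey's Theorems~2.5 and~2.6 and supplies no proof of its own, so there is no argument in the text to compare against step by step. Your proposal is correct and supplies exactly the content the citation leaves implicit: the per-time-slice embeddings are Hebey's theorems verbatim, and the only point requiring justification is the $t$-independence of the constant, which you obtain by transporting to the fixed surface $\Gamma_0$ through the flow diffeomorphisms $\Phi_t$, whose differentials and surface Jacobians are uniformly bounded above and below thanks to the assumed $C^2$ regularity of the data on the compact interval $[0,T]$. This is the standard and correct route; the resulting $L^p$ and $W^{1,q}$ norm equivalences with $t$-independent constants let the embedding constant on $\Gamma_0$ serve for every $\Gamma(t)$.
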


In particular, this allows us to embed $H^1(\Gamma(t))$ in $L^6(\Gamma(t))$ for all dimensions $(n=1,2,3)$ so that $\norm{ \psi'(\eta) }_{L^2(\Gamma(t))} \le c \big( \norm{ \eta }_{H^1(\Gamma(t))}^3 + \norm{ \eta }_{H^1(\Gamma(t))} \big)$.

Further, we assume that for each $(x,t) \in \N_T$ there exists a unique $p = p(x,t) \in \Gamma(t)$, such that
\begin{equation}
\label{eq:101}
  x = p(x,t) + d(x,t) \nu( p(x,t), t).
\end{equation}
See \cite[Chapter~14]{GilTru01} for a proof. We extend $\nu, P$ and $\H$ to functions on $\N_T$ by setting
\begin{equation*}
  \nu(x,t) = \nu(p(x,t),t) = \nabla d(x,t),
\end{equation*}
and similarly $P(x,t) = P(p(x,t),t) = \id - \nu(x,t) \otimes \nu(x,t)$ and $\H(x,t) = \nabla^2 d(x,t)$ for $(x,t) \in \N_T$.

Although it is sufficient to describe the evolution of the surface through a normal velocity, we wish to consider material surfaces for which a material particle, at $X(t)$ on $\Gamma(t)$, has a material velocity $\dot{X}(t)$ not necessarily only in the normal direction. The normal velocity of the surface can be calculated to be $v_\nu = -d_t \nu$. We say $v_\tau$ is a tangential velocity field if $v_\tau \cdot \nu = 0$ in $\N_T$. Given a tangential velocity field $v_\tau$, we call
\begin{equation*}
  v := v_\tau + v_\nu
\end{equation*}
a material velocity field. We assume that we are given a global velocity field $v$ so that points $X(t)$ evolve with the velocity $\dot{X}(t) = v( X(t), t )$. We will assume that $v \in C^2(\N_T)$.

\subsection{Material derivative and transport formulae}

Given a family of surfaces $\{ \Gamma(t) \}$ evolving in time with normal velocity field $v_\nu$, we define the normal time derivative $\nd$ of a function $\eta\colon \G_T \to \R$ by
\begin{equation}
  \label{eq:100}
  \nd \eta := \frac{\partial \tilde\eta}{\partial t} + v_\nu \cdot \nabla \tilde\eta.
\end{equation}
Here, $\tilde\eta$ denotes a smooth extension of $\eta$ to $\N_T$. This derivative describes how a quantity $\eta$ evolves in time with respect to the evolution of $\Gamma(t)$. It can be shown that this definition is an intrinsic surface derivative, independent of the choice of extension.

Given a tangential vector field $v_\tau$, we define the material derivative of a scalar function $\eta \colon \G_T \to \R$, by
\begin{equation*}
  \md \eta := \nd \eta + v_\tau \cdot \nabla_\Gamma \eta = \frac{\partial \tilde\eta}{\partial t} + v \cdot \nabla \tilde\eta.
\end{equation*}

The following formula shows the significance of the material derivative. The result is a generalisation of the classical Reynolds' Transport Formula to curved domains.

\begin{lemma}
  [Transport formula \mycite{Lemma 2.1}{DziEll13}]
  \label{lem:transport}
  Let $\M(t)$ be an evolving surface with normal velocity $v_\nu$. Let $v_\tau$ be a tangential velocity field on $\M(t)$. Let the boundary $\partial \M(t)$ evolve with velocity $v = v_\nu + v_\tau$. Assume that $\eta, \varphi$ are functions such that all the following quantities exist. Then, we obtain the identity
  \begin{equation}
    \label{eq:transport}
    \dt \int_{\M(t)} \eta \dd \sigma = \int_{\M(t)} \md \eta + \eta \nabla_\Gamma \cdot v \dd \sigma.
  \end{equation}

  Furthermore, we have
  \begin{equation}
    \label{eq:96}
    \dt \int_{\M(t)} \eta \varphi \dd \sigma
    = \int_{\M(t)} \md \eta \, \varphi + \eta \, \md \varphi + \eta \varphi \, \nabla_\Gamma \cdot v \dd \sigma.
  \end{equation}
  Let $\A = \A(x,t)$ be a matrix which is positive definite on the tangent space to $\Gamma(t)$. Denote by $D(v)$ the rate of deformation tensor given by
  \begin{equation}
    D(v)_{ij} = \frac12 \sum_{k=1}^{n+1} \big( \A_{ik} \underline{D}_k v_j + \A_{jk} \underline{D}_k v_i \big) \quad \mbox{ for } i,j = 1,\ldots, n+1,
  \end{equation}
  and by $\B(v)$ the tensor
  \begin{equation}
    \label{eq:98}
    \B(v) := \md \A + \nabla_\Gamma \cdot v \A - 2 D(v).
  \end{equation}
  Then we have the formula
  \begin{equation}
    \label{eq:103}
    \begin{aligned}
    \dt \int_{\M(t)} \A \nabla_\Gamma \eta \cdot \nabla_\Gamma \varphi \dd \sigma
    & = \int_{\M(t)} \A \nabla_\Gamma \md \eta \cdot \nabla_\Gamma \varphi
    + \A \nabla_\Gamma \eta \cdot \nabla_\Gamma \md \varphi \dd \sigma \\
    & \qquad+ \int_{\M(t)} \B(v) \nabla_\Gamma \eta \cdot \nabla_\Gamma \varphi \dd \sigma.
  \end{aligned}
\end{equation}
\end{lemma}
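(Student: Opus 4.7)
The plan is to establish \eqref{eq:transport} first by passing to Lagrangian coordinates, and then derive \eqref{eq:96} and \eqref{eq:103} from it by applying the basic formula to suitable integrands together with Leibniz-type identities for $\md$ and a commutator identity between $\md$ and $\nabla_\Gamma$.

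For \eqref{eq:transport}, I would parametrize $\M(t)$ by the Lagrangian flow $\Phi_t$ of the material velocity $v$, so that $\Phi_0 = \id$ on $\M(0)$ and $\dt \Phi_t(p) = v(\Phi_t(p),t)$. The surface measure pulls back as $J(p,t)\dd \sigma_0$, where $J$ is the surface Jacobian of $\Phi_t|_{\M(0)}$. The key ingredient is the identity $\dt J(\cdot,t) = (\nabla_\Gamma\cdot v)(\Phi_t(\cdot),t)\, J(\cdot, t)$, obtained by differentiating $J = \sqrt{\det(D\Phi_t^T D\Phi_t)}$ on tangent vectors and reading off the surface divergence. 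Combined with $\dt\bigl[\eta(\Phi_t(p),t)\bigr] = (\md \eta)(\Phi_t(p),t)$, differentiating under the pulled-back integral gives \eqref{eq:transport}. Identity \eqref{eq:96} then follows immediately by applying \eqref{eq:transport} to the product $\eta\varphi$ together with the Leibniz rule $\md(\eta\varphi) = (\md\eta)\varphi + \eta(\md\varphi)$, which holds since $\md$ is a first-order derivation.

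For \eqref{eq:103}, I would apply \eqref{eq:transport} with $f := \A\nabla_\Gamma \eta\cdot\nabla_\Gamma\varphi$ as the integrand, and then expand $\md f$ by Leibniz to obtain $(\md\A)\nabla_\Gamma \eta\cdot\nabla_\Gamma\varphi + \A(\md\nabla_\Gamma \eta)\cdot\nabla_\Gamma\varphi + \A\nabla_\Gamma \eta\cdot(\md\nabla_\Gamma\varphi)$. The crucial step is the commutator identity relating $\md\nabla_\Gamma\eta$ to $\nabla_\Gamma\md\eta$; writing out both sides via the extension-based definitions and using $\nabla_\Gamma\eta\cdot\nu = 0$ yields
\begin{equation*}
  \md (\underline{D}_k \eta) = \underline{D}_k (\md \eta) - \sum_{j=1}^{n+1} (\underline{D}_k v_j)\,\underline{D}_j \eta + (\text{terms involving } \nu),
\end{equation*}
where the normal terms drop out once contracted against the tangential vectors $\A\nabla_\Gamma\varphi$ or $\A\nabla_\Gamma\eta$. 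By symmetry of $\A$ on the tangent space, the two $(\underline{D}_k v_j)$ contributions (one from the $\eta$ slot, one from the $\varphi$ slot) combine into exactly $-2 D(v)\nabla_\Gamma\eta\cdot\nabla_\Gamma\varphi$. Collecting this with the $(\md\A)\nabla_\Gamma\eta\cdot\nabla_\Gamma\varphi$ term and the divergence contribution $f\,\nabla_\Gamma\cdot v$ coming from \eqref{eq:transport} reproduces precisely $\B(v)\nabla_\Gamma\eta\cdot\nabla_\Gamma\varphi$, yielding \eqref{eq:103}.

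The main obstacle is the commutator identity between $\md$ and $\nabla_\Gamma$: the tangent space to $\M(t)$ itself moves under the flow, so $\md$ does not commute with the projection $P$ hidden inside $\nabla_\Gamma$. Care is needed to pick a consistent extension of $\eta$ off $\M(t)$, differentiate in the ambient space, and then project tangentially, while tracking the Weingarten/curvature contributions that appear in intermediate calculations but ultimately cancel against normal components of $\nabla_\Gamma\eta$. Once this identity is pinned down the rest of the argument reduces to assembling the symmetric and divergence terms into $\B(v)$.
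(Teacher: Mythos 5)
This lemma is quoted from \mycite{Lemma~2.1}{DziEll13}; the paper itself gives no proof. Your outline reproduces the standard argument from that reference and from \cite{DziEll13a} --- Lagrangian pull-back with the Jacobian identity $\dt J = (\nabla_\Gamma \cdot v)\, J$ for \eqref{eq:transport}, the Leibniz rule for $\md$ applied to $\eta\varphi$ for \eqref{eq:96}, and the commutator of $\md$ with $\nabla_\Gamma$ followed by symmetrisation in $\A$ for \eqref{eq:103} --- and it is sound. The one place needing more care than you give it: the normal contributions (e.g.\ the term $-\nu\,(\md \nu \cdot \nabla_\Gamma \eta)$ coming from $\md P$) are only annihilated by contraction against $\A \nabla_\Gamma \varphi$ if $\A$ maps the tangent space into itself, i.e.\ $\A \nu \cdot \nabla_\Gamma \varphi = 0$; for a general symmetric $\A$ that is merely positive definite on the tangent space this should be added as a hypothesis (say $P\A P = \A$) or the surviving terms tracked explicitly. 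In the only instance used in this paper, $\A = \id$, the point is harmless.
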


We conclude this subsection with a result allowing us to extend functions defined on one surface to the whole space-time domain.

\begin{lemma}
  \label{lem:pull-back}
  Fix $t \in [0,T]$ and let $\eta \in H^1(\Gamma(t))$, respectively $C^1(\Gamma(t))$. Then there exists an extension $\tilde\eta \colon \G_T \to \R$ such that $\tilde\eta|_{t} = \eta$ and $\tilde\eta \in H^1(\Gamma(s))$, resp. $C^1(\Gamma(s))$, for all times $s \in [0,T]$ and $\md \tilde\eta = 0$.
\end{lemma}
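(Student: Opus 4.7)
My plan is to construct the extension using the flow generated by the material velocity field $v$, so that $\tilde\eta$ is constant along particle trajectories.

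More precisely, since $v\in C^2(\N_T)$, the ODE
\begin{equation*}
  \frac{\mathrm{d}}{\mathrm{d}s}\Phi(x,t;s)=v(\Phi(x,t;s),s),\qquad \Phi(x,t;t)=x,
\end{equation*}
defines a flow map $\Phi(\cdot,t;s)\colon \Gamma(t)\to \Gamma(s)$ for every $s\in[0,T]$. I would first verify that this map does indeed take $\Gamma(t)$ into $\Gamma(s)$: this follows because $v=v_\nu+v_\tau$ is a material velocity field and $v_\nu$ is precisely the normal velocity of the evolving surface, so the trajectory $s\mapsto \Phi(x,t;s)$ starting on $\Gamma(t)$ stays on $\{\Gamma(s)\}$. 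Standard ODE theory (continuous dependence, differentiability with respect to initial data) then tells us that $\Phi(\cdot,t;s)$ is a $C^2$-diffeomorphism between the compact surfaces $\Gamma(t)$ and $\Gamma(s)$, with inverse $\Phi(\cdot,s;t)$.

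With this in hand, I would simply define
\begin{equation*}
  \tilde\eta(y,s):=\eta\bigl(\Phi(y,s;t)\bigr),\qquad y\in\Gamma(s),\ s\in[0,T].
\end{equation*}
The initial identity $\tilde\eta(\cdot,t)=\eta$ is immediate. To check that $\md\tilde\eta=0$, fix any $x\in\Gamma(t)$, set $X(s)=\Phi(x,t;s)$, and observe that the semigroup property of the flow gives $\Phi(X(s),s;t)=x$, hence $\tilde\eta(X(s),s)=\eta(x)$ is constant in $s$. Since $\md$ is exactly the derivative along such material trajectories, $\md\tilde\eta=0$. Regularity in space on each slice follows from the fact that composition with a $C^2$-diffeomorphism of compact manifolds preserves $C^1$ and $H^1$: for $\eta\in C^1(\Gamma(t))$, $\tilde\eta(\cdot,s)\in C^1(\Gamma(s))$ is clear, while for $\eta\in H^1(\Gamma(t))$ a change-of-variables argument using the uniformly bounded Jacobian of $\Phi(\cdot,s;t)$ shows $\tilde\eta(\cdot,s)\in H^1(\Gamma(s))$ with
\begin{equation*}
  \norm{\tilde\eta(\cdot,s)}_{H^1(\Gamma(s))}\le C\,\norm{\eta}_{H^1(\Gamma(t))}
\end{equation*}
for a constant depending only on $v$ and $T$.

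The only mildly delicate point I anticipate is the Sobolev case: one needs either to approximate $\eta$ by smooth functions on $\Gamma(t)$ (using density of $C^1(\Gamma(t))$ in $H^1(\Gamma(t))$), push them forward via $\Phi(\cdot,t;s)$, and pass to the limit using the uniform Jacobian bound, or to invoke directly the fact that the pullback by a $C^2$-diffeomorphism is a bounded linear operator between the corresponding $H^1$ spaces. Everything else is a routine consequence of the smoothness assumptions on $v$ and the compactness of $\Gamma(t)$.
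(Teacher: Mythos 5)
Your construction is exactly the one the paper uses: define $\tilde\eta$ by pulling $\eta$ back along the flow of the material velocity field $v$, observe that $\tilde\eta$ is constant along trajectories so $\md\tilde\eta=0$, and transfer the $C^1$/$H^1$ regularity through the (at least $C^1$) diffeomorphisms $\phi_s\colon\Gamma(t)\to\Gamma(s)$ guaranteed by standard ODE theory. Your additional remarks on the change-of-variables/density argument for the Sobolev case only flesh out a step the paper treats as immediate, so the two proofs coincide in substance.
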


\begin{proof}
  The ordinary differential equation:
  \begin{equation*}
    \ds X(s) = v( X(s), s ) \quad \mbox{ for } s \in [0,T], \qquad X(t) = x,
  \end{equation*}
  determines a flow $\phi_s(x)$ on $\G_T$ for $x \in \Gamma(t)$ such that
  \begin{equation*}
    \phi_s(x) \in \Gamma(s) \quad \mbox{ for all } s \in [0,T] 
    \quad \mbox{ and }
    \phi_t(x) = x.
  \end{equation*}
  Our assumptions on $v$ imply that $\phi_s \colon \Gamma(t) \to \Gamma(s)$ and $(\phi_s)^{-1}\colon \Gamma(s) \to \Gamma(t)$ are both $C^1$ mappings \cite[Theorem~3.1]{Har64}.

  We define the extension $\tilde\eta$ by
  \begin{equation*}
    \tilde\eta( x, s ) := \eta( (\phi_s)^{-1}(x) ) \quad \mbox{ for } (x,s) \in \G_T.
  \end{equation*}
  It is clear that since $(\phi_s)^{-1} \in C^1(\Gamma(t);\Gamma(s))$, we have $\tilde\eta \in H^1(\Gamma(s))$ (resp. $C^1(\Gamma(s))$) for all times $s \in [0,T]$.
  
  Finally, we can calculate for $y = (\phi_s)^{-1}(x)$,
  \begin{equation*}
    \md \tilde\eta(x,s) = \ds \tilde \eta ( \phi_s(y), s ) = \ds \eta( y ) = 0 \quad \mbox{ for } (x,s) \in \G_T,
  \end{equation*}
  which shows the result. \qed
\end{proof}

\subsection{Derivation of Cahn-Hilliard equations}

We will consider a conservation law on an evolving surface with a diffusive flux driven by a chemical potential. This is the approach taken by \cite{ErlAziKar01}. In general, the Ginzburg-Landau functional on $\Gamma(t)$ will not decrease along the trajectory of solutions.

Let $u$ represent a density of a scalar quantity on $\Gamma(t)$. Following \cite{DziEll13}, we arrive at the pointwise conservation law
\begin{equation}
  \label{eq:64}
  \nd u + u \nabla_\Gamma \cdot v_\nu + \nabla_\Gamma \cdot q = 0.
\end{equation}
Here $q$ represents the tangential flux of $u$ on $\{ \Gamma(t) \}$.

We will assume that the flux $q$ is the sum of a diffusive flux $q_d$ and an advective flux $q_a$:
\begin{equation*}
  q_d = - \nabla_\Gamma w \quad \mbox{ and } \quad q_a = u v_\tau.
\end{equation*}
The diffusive flux is driven by the gradient of chemical potential $w$ gives us the split system \cite{EllFreMil89}
\begin{subequations}
  \label{eq:ch-split}
  \begin{align}
    \label{eq:ch-w}
    \md u + u \nabla_\Gamma \cdot v - \Delta_\Gamma w & = 0 \\
    \label{eq:ch-u}
    - \eps \Delta_\Gamma u + \frac1\eps \psi'(u) - w & = 0.
  \end{align}
\end{subequations}
This leads to the fourth order Cahn-Hilliard equation on $\G_T$:
\begin{equation}
  \label{eq:ch}
  \md u + u \nabla_\Gamma \cdot v  = \Delta_\Gamma \left( - \eps \Delta_\Gamma u + \frac1\eps \psi'(u) \right).
\end{equation}
We close the system with the initial condition
\begin{equation}
  \label{eq:ch-init}
  u( \cdot, 0 ) = u_0 \quad \mbox{ on } \Gamma_0.
\end{equation}
There are no boundary conditions since the boundary of $\Gamma(t)$ is empty.

\begin{remark}
  One can derive the Cahn-Hilliard equations posed in a Cartesian domain as an $H^{-1}$ gradient flow of the Ginzburg-Landau functional. To obtain a gradient flow on an evolving surface, there would need to be a model for $v$ and which would lead to a coupled system for $u$ and $v$. In terms of modelling, we feel these extra terms are geometric terms determining an evolution equation for the surface, which we assume is given. Therefore, we do not consider such terms in this work.
\end{remark}

\subsection{Solution spaces}

In standard parabolic theory one looks for solutions in Bochner spaces. Considering our Cahn-Hilliard equation on a Cartesian domain $\Omega$ \cite{Ell89}, one would expect solutions to live in the spaces
\begin{equation*}
  u \in L^\infty(0,T;H^1(\Omega)), u' \in L^2(0,T;H^{-1}(\Omega)), w \in L^2(0,T;H^1(\Omega)).
\end{equation*}
These spaces are constructed by considering $u$ as a function from $(0,T)$ into the Hilbert space $H^1(\Omega)$. We would like to extend this definition so that $u(t)$ is in the now time-dependent Hilbert space $H^1(\Gamma(t))$. We consider Sobolev spaces over the space-time domain $\G_T$. We will write $\nabla_{\G_T}$ for the space-time gradient and $\mathrm{d} \sigma_T$ for the space-time measure on $\G_T$. This approach is similar to the Eulerian formulation of \cite{OlsReuXu13-pp}. We contrast our approach with that of \cite{Vie11-pp}, who proposed using an equivalent formulation using a reference domain.

We start by presenting the space-time domains $L^2(\G_T)$ and $H^1(\G_T)$ defined by
\begin{align*}
  L^2(\G_T) & := \left\{ \eta \in L^1_{\mathrm{loc}} (\G_T) : \int_{\G_T} \eta^2 \dd \sigma_T < + \infty \right\} \\
  H^1(\G_T) & := \Big\{ \eta \in L^2(\G_T) : \nabla_{\G_T} \eta \in L^2(\G_T) \Big\}.
\end{align*}
with norms
\begin{align*}
  \norm{ \eta }_{L^2(\G_T)} & := \left( \int_{\G_T} \eta^2 \dd \sigma_T \right)^{\frac12} \\
  \norm{ \eta }_{H^1(\G_T)} & := \left( \norm{ \eta }_{L^2(\G_T)}^2 + \norm{ \nabla_{\G_T} \eta }_{L^2(\G_T)}^2 \right)^{\frac12}.
\end{align*}
 
\begin{proposition}
  [\mycite{Theorem 2.9}{Heb00}]
  \label{prop:compact-embed}
  The space $H^1(\G_T)$ is compactly embedded into $L^2(\G_T)$.
\end{proposition}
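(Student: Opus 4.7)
The plan is to view $\G_T$ as a compact smooth $(n{+}1)$-dimensional Riemannian submanifold with boundary of $\R^{n+2}$, and then invoke the classical Rellich--Kondrachov compactness theorem on such manifolds (which is exactly the content of Hebey's Theorem 2.9). First I would verify that under the standing regularity assumptions on $d$ and $v$ (in particular $d \in C^2$ and $v \in C^2(\N_T)$, together with the assumption that $\{\Gamma(t)\}$ is a smooth family of compact hypersurfaces), the set $\G_T$ is indeed a compact $C^2$ submanifold of $\R^{n+2}$ whose boundary consists of the two ``end caps'' $\Gamma(0)\times\{0\}$ and $\Gamma(T)\times\{T\}$; the space-time measure $\mathrm{d}\sigma_T$ and gradient $\nabla_{\G_T}$ are just the standard Riemannian volume and gradient induced by the Euclidean ambient metric.

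Next I would reduce the global statement to a local one by compactness. Cover $\G_T$ by finitely many coordinate charts $(U_\alpha,\chi_\alpha)$ (interior and boundary charts) and take a subordinate partition of unity $\{\theta_\alpha\}$. For any bounded sequence $\{\eta_j\}\subset H^1(\G_T)$, the sequence $\theta_\alpha \eta_j$ is bounded in $H^1$ of a fixed bounded Lipschitz domain $\chi_\alpha(U_\alpha)\subset \R^{n+1}$ (with uniform constants coming from the $C^2$ bounds on the charts). The classical Rellich--Kondrachov theorem in Euclidean space then produces, for each $\alpha$, a subsequence convergent in $L^2(\chi_\alpha(U_\alpha))$. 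A standard diagonal argument across the finitely many $\alpha$ gives one subsequence along which every $\theta_\alpha \eta_j$ converges in $L^2$, and summing in $\alpha$ yields convergence of $\eta_j$ in $L^2(\G_T)$.

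The only mildly non-routine step is the construction of good boundary charts near $\Gamma(0)\times\{0\}$ and $\Gamma(T)\times\{T\}$, but this is harmless: one can straighten out the space-time surface locally by flowing along $v$ (exactly the construction used in Lemma~\ref{lem:pull-back}), turning a neighbourhood in $\G_T$ into a product $U\times I$ with $U\subset\Gamma(t_0)$ and $I\subset[0,T]$ an interval; the resulting transition maps are $C^1$ by the assumed regularity of $v$, so the induced metric is uniformly equivalent to the product metric and the Euclidean Rellich theorem applies. Since this is precisely the setting of Hebey's Theorem 2.9 (compact Riemannian manifold with boundary), one may equivalently simply cite that result, which is the route suggested by the statement.
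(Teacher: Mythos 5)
Your argument is correct, and it coincides with the paper's treatment: the paper offers no proof of its own but simply cites Hebey's Theorem~2.9 (Rellich--Kondrachov on a compact Riemannian manifold), and your chart/partition-of-unity reduction to the Euclidean Rellich theorem is exactly the standard proof of that cited result, applied to $\G_T$ viewed as a compact $(n{+}1)$-dimensional submanifold of $\R^{n+2}$ with boundary $\Gamma(0)\times\{0\}\cup\Gamma(T)\times\{T\}$. As you note yourself, one may equivalently stop at the citation, which is precisely what the authors do.
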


Using the identities,
\begin{equation*}
  \int_0^T \int_{\Gamma(t)} \eta \dd \sigma \dd t = \int_{\G_T} \frac{\eta}{\sqrt{ 1 + \abs{ v_\nu }^2 }} \dd \sigma_T,
\end{equation*}
and
\begin{equation*}
  \nabla_{\G_T} \eta = \left( \nabla_\Gamma \eta + \frac{\nd \eta \, v_\nu }{ 1 + \abs{ v_\nu }^2 }, \frac{\nd \eta }{1 + \abs{v_\nu}^2 } \right),
\end{equation*}
our assumptions on $v$ imply that the space-time norms can be replaced with the equivalent norms
\begin{align*}
  \norm{ \eta }^\prime_{L^2(\G_T)} & := \left( \int_0^T \int_{\Gamma(t)} \eta^2 \dd \sigma \dd t \right)^{\frac12} \\
  \norm{ \eta }^\prime_{H^1(\G_T)} & := \left( \int_0^T \int_{\Gamma(t)} \eta^2 + \abs{ \nabla_\Gamma \eta }^2 + (\nd \eta)^2 \dd \sigma \dd t \right)^{\frac12}.
\end{align*}
We will use the equivalent primed norms (dropping the prime) on $L^2(\G_T)$ and $H^1(\G_T)$ in the following.

We define the space $L^2_{L^2}$ by
\begin{align*}
  L^2_{L^2} := \left\{ \eta \in L^1_{\mathrm{loc}}(\G_T) : \int_0^T \int_{\Gamma(t)} \eta \dd \sigma \dd t < + \infty \right\},
\end{align*}
with the inner product
\begin{equation*}
  ( \eta, \xi )_{L^2_{L^2}} := \int_0^T \int_{\Gamma(t)} \eta \xi \dd \sigma \dd t.
\end{equation*}
It is clear that $L^2_{L^2}$ is equivalent to $L^2(\G_T)$ and hence is a Hilbert space.

Next, we define the space $L^2_{H^1}$ as
\begin{align*}
  L^2_{H^1} := \left\{ \eta \colon \G_T \to \R : \eta \in L^2_{L^2} \mbox{ and } \nabla_\Gamma \eta \in (L^2_{L^2})^{n+1} \right\},
\end{align*}
with the inner product
\begin{equation*}
  ( \eta, \xi )_{L^2_{H^1}} := \int_0^T \int_{\Gamma(t)} \nabla_\Gamma \eta \cdot \nabla_\Gamma \xi + \eta \xi \dd \sigma \dd t,
\end{equation*}
where $\nabla_\Gamma \eta$ should be interpreted in the weak sense. Notice that elements of this space are weakly differentiable at almost every time.

\begin{lemma}
  The space $L^2_{H^1}$ is a Hilbert space.
\end{lemma}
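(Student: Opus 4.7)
The plan is the standard Hilbert-space completion argument: verify that the given bilinear form is an inner product, and then show that the space is complete by exploiting the (already established) completeness of $L^2_{L^2}$ together with closedness of the weak tangential gradient as an unbounded operator from $L^2_{L^2}$ into $(L^2_{L^2})^{n+1}$.

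First I would check the algebraic properties: bilinearity and symmetry of $(\cdot,\cdot)_{L^2_{H^1}}$ are immediate, while positive definiteness follows from the fact that $\|\eta\|_{L^2_{H^1}}^2 \geq \|\eta\|_{L^2_{L^2}}^2$ and $L^2_{L^2}$ is an inner product space (equivalent to $L^2(\G_T)$). Next, take a Cauchy sequence $(\eta_k) \subset L^2_{H^1}$. By the definition of the norm, $(\eta_k)$ is Cauchy in $L^2_{L^2}$ and each component of $(\nabla_\Gamma \eta_k)$ is Cauchy in $L^2_{L^2}$. Since $L^2_{L^2}$ is a Hilbert space, there exist $\eta \in L^2_{L^2}$ and $\xi = (\xi_1,\ldots,\xi_{n+1}) \in (L^2_{L^2})^{n+1}$ such that $\eta_k \to \eta$ and $\underline{D}_j \eta_k \to \xi_j$ in $L^2_{L^2}$ for each $j$.

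The crucial step is identifying $\xi$ with the weak tangential gradient $\nabla_\Gamma \eta$. For this I would test against a smooth compactly supported function $\varphi \in C^1_c(\G_T)$ and use the partial integration formula \eqref{eq:int-by-parts} at a.e.\ time slice, which (since $\Gamma(t)$ has no boundary) gives
\begin{equation*}
  \int_0^T \int_{\Gamma(t)} \underline{D}_j \eta_k \, \varphi \dd \sigma \dd t
  = - \int_0^T \int_{\Gamma(t)} \eta_k \, \underline{D}_j \varphi \dd \sigma \dd t
  + \int_0^T \int_{\Gamma(t)} \eta_k \, \varphi \, H \nu_j \dd \sigma \dd t.
\end{equation*}
Passing $k \to \infty$ on both sides using the $L^2_{L^2}$ convergence (and the fact that $H, \nu_j$ are bounded on $\G_T$ by the smoothness assumptions on $d$) yields the analogous identity with $\underline{D}_j \eta_k$ replaced by $\xi_j$ and $\eta_k$ by $\eta$. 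This is precisely the weak definition of $\underline{D}_j \eta = \xi_j$, so $\eta \in L^2_{H^1}$ and $\eta_k \to \eta$ in $L^2_{H^1}$.

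The main obstacle is Step 3, namely verifying that the weak tangential gradient defined slice-wise is compatible with the global $L^2_{L^2}$ limits; the subtlety is that the test function space must be rich enough to characterise $\xi_j$ uniquely. This is handled by choosing product test functions $\varphi(x,t) = \chi(t)\psi(x)$ with $\chi \in C_c^\infty(0,T)$ and $\psi$ a smooth function pulled back along the flow $\phi_s$ of Lemma~\ref{lem:pull-back}, so that the integration-by-parts identity at each time slice aggregates into a global identity on $\G_T$ by Fubini; the density of such test functions in an appropriate dual space then forces $\xi_j = \underline{D}_j \eta$ almost everywhere.
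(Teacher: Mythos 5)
Your proposal is correct and follows essentially the same route as the paper's own proof: extract $L^2_{L^2}$ limits $\eta$ and $\xi$ from the Cauchy sequence, identify $\xi$ as the weak tangential gradient by testing against products $\alpha(t)\tilde\varphi$ with $\tilde\varphi$ the flow extension of a fixed-time test function from Lemma~\ref{lem:pull-back}, and localise to almost every time slice via the Fundamental Lemma of the Calculus of Variations. The only cosmetic difference is that you retain the curvature term $\eta\varphi H\nu_j$ from \eqref{eq:int-by-parts} in the slice-wise integration by parts, which the paper's displayed computation suppresses; since $H$ and $\nu$ are bounded on $\G_T$ this term passes to the limit just as easily and does not alter the argument.
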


\begin{proof}
  It is clear that $L^2_{H^1}$ is an inner product space and we are left to show completeness. Let $\eta_k$ be a Cauchy sequence in $L^2_{H^1}$. This implies that $\eta_k$ and $\nabla_\Gamma \eta_k$ are Cauchy sequences in $L^2(\G_T)$ and $( L^2(\G_T) )^{n+1}$. This means that there exists $\eta \in L^2(\G_T), \xi \in ( L^2(\G_T) )^{n+1}$ such that
  \begin{equation*}
    \norm{ \eta_k - \eta }_{L^2(\G_T)} + \norm{ \nabla_\Gamma \eta_k - \xi }_{L^2(\G_T)} \to 0 \quad \mbox{ as } k \to \infty.
  \end{equation*}
  Fix $t^* \in (0,T)$ and let $\varphi \in C^1(\Gamma(t^*))$ and $\alpha \in C(0,T)$. Using \myref[Lemma]{lem:pull-back}, we can construct $\tilde\varphi \colon \G_T \to \R$ such that $\tilde\varphi(\cdot, t) = \varphi$ and $\tilde\varphi \in C^1(\Gamma(t))$ for each time $t \in (0,T)$. Then, for $j=1,\ldots,n+1$, we obtain
  \begin{align*}
    & \int_0^T \int_{\Gamma(t)} \eta \underline{D}_j ( \alpha \tilde\varphi ) + \xi_j (\alpha \tilde\varphi) \dd \sigma \dd t \\
    & \quad = \int_0^T \int_{\Gamma(t)} ( \eta - \eta_k ) \underline{D}_j  (\alpha \tilde\varphi) + \big( \eta_k \underline{D}_j (\alpha \tilde\varphi) + \xi_j (\alpha \tilde\varphi) \big) \dd \sigma \dd t \\
    & \quad = \int_0^T \int_{\Gamma(t)} ( \eta - \eta_k ) \underline{D}_j (\alpha \tilde\varphi) + ( - \underline{D}_j \eta_k + \xi_j ) (\alpha \tilde\varphi) \dd \sigma \dd t,
  \end{align*}
  where we have used the fact that $\eta_k$ is weakly differentiable at almost every time. Taking the limit $k \to \infty$, we infer
  \begin{equation*}
    \int_0^T \alpha \left( \int_{\Gamma(t)} \eta \underline{D}_j \tilde\varphi + \xi_j \tilde\varphi \dd \sigma \right) \dd t = 0.
  \end{equation*}
  Since this holds for all $\alpha \in C(0,T)$, by the Fundamental Lemma of the Calculus of Variations, at $t = t^*$, we have
  \begin{equation*}
    \int_{\Gamma(t^*)} \eta \underline{D}_j \varphi + \xi_j \varphi \dd \sigma = 0 \quad \mbox{ for all } \varphi \in C^1(\Gamma(t^*)).
  \end{equation*}
  Since the choice of $t^*$ was arbitrary, we infer that $\xi$ is the weak gradient of $\eta$ for almost every time $t \in (0,T)$ and the proof is complete. \qed
\end{proof}

The equivalence of norms implies that $\eta \in L^2_{H^1}$ with $\md \eta \in L^2_{L^2}$ if, and only if, $\eta \in H^1(\G_T)$.

For $1 \le q \le \infty$, we will define the space $L^q_{H^1}$ by
\begin{equation*}
  L^q_{H^1}:= \left\{ \eta \in L^q(\G_T) : \norm{ \eta }_{L^q_{H^1}} < + \infty \right\},
\end{equation*}
with norm
\begin{equation*}
  \norm{ \eta }_{L^q_{H^1}} := 
  \begin{cases}
    \left( \displaystyle\int_0^T \norm{ \eta }_{H^1(\Gamma(t))}^q \dd t \right)^{\frac1q} & \quad \mbox{ for } q < \infty, \\
    \esssup\limits_{t \in (0,T)} \norm{ \eta }_{H^1(\Gamma(t))} & \quad \mbox{ for } q = \infty.
  \end{cases}
\end{equation*}
It is clear that $L^\infty_{H^1} \subset L^2_{H^1}$ and that
\begin{equation*}
  \norm{ \eta }_{L^2_{H^1}} \le \sqrt{T} \norm{ \eta }_{L^\infty_{H^1}} \quad \mbox{ for all } \eta \in L^\infty_{H^1}.
\end{equation*}
Finally, we define $L^\infty_{H^2}$ and $L^2_{H^2}$ by
\begin{align*}
  L^\infty_{H^2} & := \left\{ \eta \in L^2(\G_T) : \esssup_{t \in (0,T)} \norm{ \eta }_{H^2(\Gamma(t))} < + \infty \right\} \\
  L^2_{H^2} & := \left\{ \eta \in L^2(\G_T) : \int_0^T \norm{ \eta }_{H^2(\Gamma(t))}^2 \dd t < + \infty \right\}.
\end{align*}

\begin{remark}
  As a restriction on our analysis we will only consider $\md u$ as a function in  $L^2_{L^2}$ since we do not wish to consider a weak material derivative. Such considerations are left to future work.
\end{remark}

We conclude this section with a result which will take an integral in time equality into an almost everywhere in time equality. The proof is the generalisation of a similar result given in \cite[Lemma~7.4]{Rob01} for planar domains.

\begin{lemma}
  \label{lem:tiredy}
  Let $\eta \in L^2_{H^1}$ with
  \begin{equation}
    \int_0^T \int_{\Gamma(t)} \nabla_\Gamma \eta \cdot \nabla_\Gamma \xi + \eta \xi \dd \sigma \dd t = 0 \quad \mbox{ for all } \xi \in L^2_{H^1}.
  \end{equation}
  Then for almost all times $t \in (0,T)$, 
  \begin{equation}
    \label{eq:59}
    \int_{\Gamma(t)} \nabla_\Gamma \eta \cdot \nabla_\Gamma \varphi + \eta \varphi \dd \sigma = 0 \quad \mbox{ for all } \varphi \in L^2_{H^1}.
  \end{equation}
\end{lemma}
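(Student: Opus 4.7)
The plan is to localize the integral identity pointwise in time via the Fundamental Lemma of the Calculus of Variations, then to remove the dependence of the resulting exceptional null set on the test function using separability of $H^1(\Gamma_0)$ and the pull-back of \myref[Lemma]{lem:pull-back} to transport test functions from the reference surface $\Gamma_0$ to each time $t$. Concretely, for any fixed $\varphi \in L^2_{H^1}$ define
\[
F_\varphi(t) := \int_{\Gamma(t)} \nabla_\Gamma \eta \cdot \nabla_\Gamma \varphi + \eta \varphi \dd \sigma,
\]
which lies in $L^1(0,T)$ by Cauchy-Schwarz in space and time, with $\norm{F_\varphi}_{L^1(0,T)} \le \norm{\eta}_{L^2_{H^1}}\norm{\varphi}_{L^2_{H^1}}$. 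For any $\alpha \in L^\infty(0,T)$, the product $\alpha\varphi$ belongs to $L^2_{H^1}$, so the hypothesis with $\xi := \alpha\varphi$ gives $\int_0^T \alpha(t) F_\varphi(t) \dd t = 0$. The Fundamental Lemma of the Calculus of Variations then yields $F_\varphi(t) = 0$ for $t \in (0,T)\setminus N_\varphi$, where $N_\varphi$ is a null set depending on $\varphi$.

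To obtain a single null set valid for all test functions, choose a countable dense sequence $\{\zeta_k\}_{k \in \mathbb{N}}$ in the separable space $H^1(\Gamma_0)$ and extend each $\zeta_k$ via \myref[Lemma]{lem:pull-back} (applied at $t=0$) to $\tilde\zeta_k \in L^2_{H^1}$ with $\tilde\zeta_k(\cdot, s) = \zeta_k \circ (\phi_s)^{-1}$. Set $N := \bigcup_k N_{\tilde\zeta_k}$, still a null set, and fix $t^* \in (0,T)\setminus N$ at which also $\eta(\cdot, t^*) \in H^1(\Gamma(t^*))$ (true for a.e.\ $t^*$ by the definition of $L^2_{H^1}$). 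Given any $\psi \in H^1(\Gamma(t^*))$, set $\zeta := \psi \circ \phi_{t^*} \in H^1(\Gamma_0)$ and pick $\zeta_k \to \zeta$ in $H^1(\Gamma_0)$. The $C^1$-regularity of $(\phi_{t^*})^{-1}$ supplied by \myref[Lemma]{lem:pull-back} gives $\tilde\zeta_k(\cdot, t^*) \to \psi$ in $H^1(\Gamma(t^*))$. Passing to the limit in $F_{\tilde\zeta_k}(t^*) = 0$ produces $\int_{\Gamma(t^*)} \nabla_\Gamma \eta \cdot \nabla_\Gamma \psi + \eta \psi \dd \sigma = 0$. Since the integrand in \eqref{eq:59} depends on $\varphi$ only through its a.e.-defined restriction $\varphi(\cdot, t^*) \in H^1(\Gamma(t^*))$, this establishes \eqref{eq:59} for every $\varphi \in L^2_{H^1}$.

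I expect the main obstacle to be the pointwise-in-time convergence of the approximation at the prescribed time $t^*$: density of $\{\tilde\zeta_k\}$ in $L^2_{H^1}$ does not by itself imply density of the time slices $\{\tilde\zeta_k(\cdot, t^*)\}$ in $H^1(\Gamma(t^*))$, which is why the approximation must be routed through the fixed reference surface $\Gamma_0$ and then transported back to time $t^*$ by the flow, relying on the $C^1$-regularity of $\phi_{t^*}^{\pm 1}$ to preserve the $H^1$-topology.
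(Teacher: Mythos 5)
Your proof is correct, and it is in fact more complete than the one in the paper. The paper's argument consists only of your first step: fix $\varphi \in L^2_{H^1}$, test with $\xi = \alpha\varphi$ (the paper takes $\alpha \in C([0,T])$ where you take $\alpha \in L^\infty(0,T)$; either suffices since $F_\varphi \in L^1(0,T)$), and invoke the Fundamental Lemma of the Calculus of Variations. Read literally, that yields ``for every $\varphi$ there is a null set $N_\varphi$ outside of which \eqref{eq:59} holds'', i.e.\ the quantifiers in the opposite order to the statement. Your second step --- pushing a countable dense subset of the separable space $H^1(\Gamma_0)$ through the flow of \myref[Lemma]{lem:pull-back}, taking the union of the countably many null sets, and then recovering an arbitrary $\psi \in H^1(\Gamma(t^*))$ as an $H^1(\Gamma(t^*))$-limit of the transported slices via the $C^1$ regularity of $\phi_{t^*}^{\pm1}$ --- supplies exactly the missing uniformisation, and your closing remark correctly identifies why the approximation must be routed through the fixed reference surface: density in the space-time norm says nothing about any individual time slice. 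The only point worth spelling out is that $\tilde\zeta_k \in L^2_{H^1}$ requires $\norm{\tilde\zeta_k}_{H^1(\Gamma(s))} \le c \norm{\zeta_k}_{H^1(\Gamma_0)}$ uniformly in $s \in [0,T]$, which follows from the compactness of the time interval and the smoothness of $v$; the paper uses the same extension as a space-time test function in its completeness proof for $L^2_{H^1}$, so this is consistent with its framework.
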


\begin{proof}
  Fix $\varphi \in L^2_{H^1}$ and $\alpha \in C([0,T])$, then choosing $\xi = \alpha \varphi \in L^2_{H^1}$ and
  \begin{equation*}
    0 = \int_0^T \int_{\Gamma(t)} \nabla_\Gamma \eta \cdot \nabla_\Gamma \xi + \eta \xi \dd \sigma \dd t
    = \int_0^T \alpha \left( \int_{\Gamma(t)} \nabla_\Gamma \eta \cdot \nabla_\Gamma \varphi + \eta \varphi \dd \sigma \right) \dd t.
  \end{equation*}
  Since the choice of $\alpha$ was arbitrary, the Fundamental Lemma of the Calculus of Variations implies the result. \qed
\end{proof}

\subsection{Weak and variational form}

We start by multiplying (\ref{eq:ch-w}, \ref{eq:ch-u}) by a test function $\varphi$ and apply integration by parts to the Laplacian terms to give the weak form. This will be the definition of solution used throughout this paper. Existence and uniqueness of solutions will be shown \myref[Section]{sec:well-posedn-cont}.

\begin{definition}[Weak solution]
  \label{def:ch-solution}
  We say that the pair $(u,w) \colon \G_T \to \R^2$, with $u \in L^\infty_{H^1} \cap H^1(\G_T)$ and $w \in L^2_{H^1}$, are a weak solution of the Cahn-Hilliard equation \eqref{eq:ch} if, for almost every time $t \in (0,T)$,
  \begin{subequations}
    \label{eq:ch-weak}
    \begin{align}
      \label{eq:ch-weak-w}
      \int_{\Gamma(t)} \md u \varphi + u \varphi \nabla_\Gamma \cdot v + \nabla_\Gamma w \cdot \nabla_\Gamma \varphi \dd \sigma & = 0 \\
      \label{eq:ch-weak-u}
      \int_{\Gamma(t)} \eps \nabla_\Gamma u \cdot \nabla_\Gamma \varphi + \frac1\eps \psi'(u) \varphi - w \varphi \dd \sigma & = 0,
    \end{align}
  \end{subequations}
  \begin{flushright}for all $\varphi \in L^2_{H^1}$,\end{flushright}
  and $u(\cdot, 0) = u_0$ pointwise almost everywhere in $\Gamma_0$.
\end{definition}

Restricting our thoughts to $\varphi \in H^1(\G_T)$, applying the transport formula to the first two terms in \eqref{eq:ch-weak-w} gives the variational formulation:
\begin{subequations}
  \label{eq:ch-var}
  \begin{align}
    \label{eq:ch-var-w}
    \dt \left( \int_{\Gamma(t)} u \varphi \dd \sigma \right) + \int_{\Gamma(t)} \nabla_\Gamma w \cdot \nabla_\Gamma \varphi \dd \sigma & = \int_{\Gamma(t)} u \md \varphi \dd \sigma \\
    \label{eq:ch-var-u}
    \int_{\Gamma(t)} \eps \nabla_\Gamma u \cdot \nabla_\Gamma \varphi + \frac1\eps \psi'(u) \varphi \dd \sigma  & = \int_{\Gamma(t)} w \varphi \dd \sigma.
  \end{align}
\end{subequations}
We remark that this formulation has no explicit mention of the velocity field $v$ and will be the basis of our finite element calculations.

It will be useful to write these equations using abstract bilinear forms. We define the following three to describe the above equations for $\eta, \varphi \in H^1(\Gamma(t))$:
\begin{equation*}
  \begin{gathered}
    m ( \eta, \varphi ) =
    \int_{\Gamma(t)} \eta \varphi \dd \sigma \qquad
    a ( \eta, \varphi ) = 
    \int_{\Gamma(t)} \nabla_\Gamma \eta \cdot \nabla_\Gamma \varphi \dd \sigma \\
    g ( v; \eta, \varphi ) =
    \int_{\Gamma(t)} \eta \varphi \nabla_\Gamma \cdot v \dd \sigma.
  \end{gathered}
\end{equation*}
This lets us write \eqref{eq:ch-weak} as
\begin{equation}
  \label{eq:ch-weak-abs}
  \begin{aligned}
    m ( \md u, \varphi ) + g ( v; u, \varphi ) + a ( w, \varphi ) & = 0 \\
    \eps a ( u, \varphi ) + \frac1\eps m ( \psi'(u), \varphi ) - m ( w, \varphi ) & = 0,
  \end{aligned}
\end{equation}
and \eqref{eq:ch-var} as
\begin{equation}
  \label{eq:ch-var-abs}
  \begin{aligned}
    \dt m ( u, \varphi ) + a ( w, \varphi ) & = m ( u, \md \varphi ) \\
    \eps a ( u, \varphi ) + \frac1\eps m ( \psi'(u), \varphi ) & = m ( w, \varphi ).
  \end{aligned}
\end{equation}

We may also write the results of \myref[Lemma]{lem:transport} in this form:
\begin{align*}
  \dt m ( \eta, \varphi ) & = m ( \md \eta, \varphi ) + m ( \eta, \md \varphi ) + g ( v; \eta, \varphi ) \\
  \dt a ( \eta, \varphi ) & = a ( \md \eta, \varphi ) + a ( \eta, \md \varphi ) + b ( v; \eta, \varphi ),
\end{align*}
with the addition of
\begin{equation*}
  b ( v; \eta, \varphi ) = \int_{\Gamma(t)} \B(v) \nabla_\Gamma \eta \cdot \nabla_\Gamma \varphi \dd \sigma,
\end{equation*}
using $\A = \id$ in the definition of $\B(v)$.

%--- finite element approximation -------------------------------------------%           
                                                                                         
\section{Finite element approximation}
\label{sec:finite-elem-appr}

In this section, we propose a finite element method for approximating solutions of the Cahn-Hilliard equation \eqref{eq:ch} based on the evolving surface finite element method \cite{DziEll07}.

\subsection{Evolving triangulation and discrete material derivative}

Let $\Gamma_{h,0}$ be  a polyhedral approximation of the initial surface $\Gamma_0$ with the restriction that the nodes $\{ X_j^0 \}_{j=1}^N$ of $\Gamma_{h,0}$ lie on $\Gamma_0$. We evolve the nodes $\{ X_j(t) \}_{j=1}^N$ by the smooth surface velocity:
\begin{equation*}
  \dot{X}_j(t) = v( X_j(t), t ), \quad X_j(0) = X_j^0, \quad \mbox{ for } j = 1, \ldots, N.
\end{equation*}
Linearly interpolating between these nodes defines a family of discrete surfaces $\{ \Gamma_h(t) \}$. At each time, we assume that we have a triangulation $\T_h(t)$ of $\Gamma_h(t)$, with $h$ the maximum diameter of elements in $\T_h(t)$ uniformly in time:
\begin{equation}
  \label{eq:66}
  h := \sup_{t \in (0,T)} \max_{E(t) \in \T_h(t)} \diam\, E(t).
\end{equation}
We assume this triangulation is quasi-uniform \cite{BreSco02} uniformly in time.

\begin{remark}
  In practical situations, assuming a uniformly regular mesh may not be feasible. Large surface deformations can lead to poor quality triangulations with deformed elements. In such cases, re-meshing may be required \cite{ClaDieDzi04,EilEll08}. Alternatively, one may use an arbitrary Lagrangian-Eulerian formulation by allowing extra tangential mesh motions \cite{EllSti13,EllSty12}.
\end{remark}

We define $\nu_h$ element-wise as the unit outward pointing normal to $\Gamma_h(t)$ and denote by $\nabla_{\Gamma_h}$ the tangential gradient on $\Gamma_h(t)$ defined element-wise by
\begin{equation*}
  \nabla_{\Gamma_h} \eta_h := \nabla \tilde\eta_h - ( \nabla \tilde\eta_h \cdot \nu_h ) \nu_h = ( \id - \nu_h \otimes \nu_h ) \nabla \tilde\eta_h =: P_h \nabla \tilde\eta_h.
\end{equation*}
This is a vector-valued quantity and we will denote its components by
\begin{equation*}
  \nabla_{\Gamma_h} \eta_h = \left( \underline{D}_{h,1} \eta_h, \ldots, \underline{D}_{h,n+1} \eta_h \right)
\end{equation*}

We define the finite element space of piecewise linear functions on $\Gamma_h(t)$ by
\begin{equation}
  \label{eq:68}
  S_h(t) := \{ \phi_h \in C( \Gamma_h(t) ) : \phi_h |_{E(t)} \mbox{ is affine linear, for each } E(t) \in \T_h(t) \}.
\end{equation}
We will write $\{ \phi_j^N ( \cdot, t ) \}_{j=1}^N$ for the nodal basis of $S_h(t)$ given by $\phi_j^N( X_i(t), t ) = \delta_{ij}$.

The definition of a basis of $S_h(t)$ allows us to characterise the velocity of the surface $\{ \Gamma_h(t) \}$. An arbitrary point $X(t)$ on $\Gamma_h(t)$ evolves according to the discrete velocity $V_h$ given by
\begin{equation}
  \label{eq:69}
  \dot{X}(t) = V_h( X(t), t) := \sum_{j=1}^N \dot{X}_j(t) \phi_j^N( X(t), t )
  = \sum_{j=1}^N v( X_j(t), t ) \phi_j^N( X(t), t ).
\end{equation}

We will write $\G_{h,T}$ as the discrete equivalent to $\G_T$:
\begin{equation}
  \label{eq:63}
  \G_{h,T} := \bigcup_{t \in (0,T)} \Gamma_h(t) \times \{ t \}.
\end{equation}
The discrete velocity $V_h$ induces a discrete material derivative. For a scalar quantity $\eta_h$ on $\G_{h,T}$, we define the discrete material derivative $\md_h \eta_h$ by
\begin{equation}
  \label{eq:70}
  \md_h \eta_h := \partial_t \tilde\eta_h + \nabla \tilde\eta_h \cdot V_h,
\end{equation}
where $\tilde\eta_h$ is an arbitrary extension of $\eta_h$ to $\N_T$. This leads to the remarkable transport property of the basis functions $\{ \phi_j^N \}$.
\begin{lemma}
  [Transport of basis functions \mycite{Proposition~5.4}{DziEll07}]
  \label{lem:transport-basis}
  Let $\phi_j^N\colon \G_{h,T} \to \R$ be a nodal basis function as described above, then
  \begin{equation}
    \label{eq:71}
    \md_h \phi_j^N = 0.
  \end{equation}
\end{lemma}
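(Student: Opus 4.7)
My plan is to show that the discrete material derivative of $\phi_j^N$ vanishes by observing that the flow of the discrete velocity $V_h$ preserves barycentric coordinates inside each simplex. Once that is established, $\phi_j^N$ is constant along material trajectories, which is equivalent to $\md_h \phi_j^N = 0$.

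More precisely, fix a time $t$, pick $X(t)\in\Gamma_h(t)$ lying in an element $E(t)$ with vertices $X_{i_1}(t),\ldots,X_{i_{n+1}}(t)$, and write $X(t)=\sum_k\lambda_k(t)X_{i_k}(t)$ in barycentric coordinates ($\sum_k\lambda_k=1$). Because $\phi_j^N$ is affine on $E(t)$ and equals $\delta_{j i_k}$ at vertex $X_{i_k}(t)$, its value at $X(t)$ is $\lambda_k(t)$ if $j=i_k$ and $0$ otherwise. Substituting into the definition \eqref{eq:69},
\begin{equation*}
V_h(X(t),t)=\sum_{k=1}^{n+1}\lambda_k(t)\,v\bigl(X_{i_k}(t),t\bigr)=\sum_{k=1}^{n+1}\lambda_k(t)\,\dot X_{i_k}(t).
\end{equation*}

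Now consider a material trajectory $X(t)$ of $V_h$, i.e.\ $\dot X(t)=V_h(X(t),t)$. Differentiating the barycentric representation gives
\begin{equation*}
\dot X(t)=\sum_k\dot\lambda_k(t)X_{i_k}(t)+\sum_k\lambda_k(t)\dot X_{i_k}(t),
\end{equation*}
so comparison with the formula above forces $\sum_k\dot\lambda_k(t)X_{i_k}(t)=0$; together with $\sum_k\dot\lambda_k(t)=0$ and the affine independence of the (non-degenerate) vertices $X_{i_k}(t)$, this yields $\dot\lambda_k(t)\equiv 0$. Hence the barycentric coordinates of a material point are constant in time, so $\phi_j^N(X(t),t)=\lambda_{k_0}(0)$ or $0$ is constant as well. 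By the chain rule applied to any smooth extension $\tilde\phi_j^N$,
\begin{equation*}
0=\frac{d}{dt}\phi_j^N(X(t),t)=\partial_t\tilde\phi_j^N(X(t),t)+\nabla\tilde\phi_j^N(X(t),t)\cdot V_h(X(t),t)=\md_h\phi_j^N(X(t),t),
\end{equation*}
which is the claim.

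The only subtlety is the piecewise character of the data: $\nabla\tilde\phi_j^N$ is discontinuous across inter-element faces, and one should also verify that the trajectory issuing from the interior of $E(0)$ stays in the corresponding element $E(t)$. The latter is immediate, since the map sending the reference simplex to $E(t)$ via the moving vertices is affine and bijective, so the trajectory with fixed barycentric coordinates is exactly its image; thus we argue element-by-element and do not need to worry about crossings. Choosing the extension $\tilde\phi_j^N$ element-wise (or noting that $\md_h$ is intrinsic and independent of the extension, as in the smooth case) makes the calculation above legitimate on the open interior of each $E(t)$, which is all we need.
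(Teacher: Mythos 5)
The paper does not prove this lemma itself but cites it from \cite{DziEll07}; your argument is correct and is essentially the standard proof given there, namely that a material point of $\Gamma_h(t)$ moving with $V_h$ has constant barycentric coordinates in its (moving) element, so the nodal basis functions are constant along trajectories and hence have vanishing discrete material derivative. Your handling of the two subtleties (element-wise interpretation of the gradient and the fact that the constant-barycentric-coordinate curve is itself the integral curve of $V_h$) is exactly what is needed, so nothing is missing.
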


From a practical view point, a key advantage of this methodology is that, since basis functions have zero discrete material velocity, there is no mention of the velocity or curvature in the resulting finite element scheme.

These discrete quantities also satisfy a variant of the transport formula from \myref[Lemma]{lem:transport}. We label the surface measure on $\Gamma_h(t)$ as $\mathrm{d} \sigma_h$.

\begin{lemma}
  [Transport lemma for triangulated surfaces \mycite{Lemma~4.2}{DziEll13}]
  \label{lem:discrete-transport}
  Let $\{ \Gamma_h(t) \}$ be a discrete family of triangulated surfaces evolving with velocity $V_h$. Let $\eta_h, \phi_h$ be time-dependent finite element functions such that the following quantities exist. Then, we have
  \begin{equation}
    \label{eq:discrete-transport}
    \dt \int_{\Gamma_h(t)} \eta_h \dd \sigma_h
    = \int_{\Gamma_h(t)} \md_h \eta_h + \eta_h \nabla_{\Gamma_h} \cdot V_h \dd \sigma_h.
  \end{equation}
  In particular, for the $L^2$ inner product this means that
  \begin{equation}
    \label{eq:mh-transport}
    \dt \int_{\Gamma_h(t)} \eta_h \phi_h \dd \sigma_h
    = \int_{\Gamma_h(t)} ( \md_h \eta_h ) \phi_h  + \eta_h ( \md_h \phi_h ) +
    \eta_h \phi_h \nabla_{\Gamma_h} \cdot V_h \dd \sigma_h,
  \end{equation}
  and for the Dirichlet inner product, we obtain
  \begin{equation}
    \label{eq:ah-transport}
    \begin{aligned}
      & \dt \int_{\Gamma_h(t)} \nabla_{\Gamma_h} \eta_h \cdot \nabla_{\Gamma_h} \phi_h \dd \sigma_h \\
      & \qquad = \int_{\Gamma_h(t)} \nabla_{\Gamma_h} ( \md_h \eta_h ) \cdot \nabla_{\Gamma_h} \phi_h + \nabla_{\Gamma_h} \eta_h \cdot \nabla_{\Gamma_h} ( \md_h \phi_h ) \dd \sigma_h \\
      & \qquad \quad + \sum_{E(t) \in \T_h(t)} \int_{E(t)} \B_h( V_h ) \nabla_{\Gamma_h} \eta_h \cdot \nabla_{\Gamma_h} \phi_h \dd \sigma_h,
    \end{aligned}
  \end{equation}
  where
  \begin{equation*}
    \B_h( V_h ) = \frac12 ( \nabla_{\Gamma_h} \cdot V_h ) \id - D_h( V_h ) \quad \mbox{ and } \quad 
    D_h( V_h )_{ij} = \frac12 \big( \underline{D}_{h,i} V_{h,j} + \underline{D}_{h,j} V_{h,i} \big).
  \end{equation*}
\end{lemma}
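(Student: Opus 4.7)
The plan is to reduce each of the three identities to an element-by-element computation and then apply the smooth transport formulae of \myref[Lemma]{lem:transport} on each triangle. The key observation is that each element $E(t) \in \T_h(t)$ is a smoothly evolving patch of flat surface whose interior moves with the discrete velocity $V_h$ defined in \eqref{eq:69}. On $E(t)$ the discrete material derivative $\md_h$ from \eqref{eq:70} coincides with the smooth material derivative associated with the velocity $V_h$, and $\nabla_{\Gamma_h}$ is the usual tangential gradient on this flat piece. Hence \myref[Lemma]{lem:transport} applies verbatim to each element by setting $\M(t) = E(t)$ and using $V_h$ in place of $v$.

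For \eqref{eq:discrete-transport}, I would apply \eqref{eq:transport} on each $E(t)$ and sum over $E(t) \in \T_h(t)$. The boundary contributions on shared interior edges cancel because adjacent elements inherit consistent edge velocities from the piecewise-linear interpolant $V_h$, and $\Gamma_h(t)$ is closed so there is no outer boundary. The identity \eqref{eq:mh-transport} then follows immediately by applying \eqref{eq:discrete-transport} to the product $\eta_h \phi_h$ and using the Leibniz rule $\md_h(\eta_h \phi_h) = (\md_h \eta_h)\phi_h + \eta_h(\md_h \phi_h)$, which holds pointwise on each element since $\md_h$ is a first-order directional derivative along trajectories of $V_h$.

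For \eqref{eq:ah-transport}, I would apply the gradient transport formula \eqref{eq:103} on each element with $\A = \id$ (so $\md \A = 0$) and velocity $V_h$. This produces the two material-derivative terms together with an element integral of a correction tensor acting on $\nabla_{\Gamma_h} \eta_h \cdot \nabla_{\Gamma_h} \phi_h$. Unlike the mass-type identities, the resulting sum over elements does not collapse into a single surface integral because the element normal $\nu_h$, and hence the projected tensors $D_h(V_h)$ and $\B_h(V_h)$, are discontinuous across interior edges; this is exactly the origin of the element-wise sum in the statement.

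The step I expect to be the main obstacle is the precise identification of the tensor $\B_h(V_h)$ on each element. Although each triangle is geometrically flat, the nodal velocities $v(X_j,t)$ generally have components normal to the plane of the triangle, so $V_h$ tilts and stretches the element as time advances. Tracking how the induced metric on $E(t)$ evolves under this motion, and verifying that the correction term acts exactly as $\tfrac12 (\nabla_{\Gamma_h} \cdot V_h)\id - D_h(V_h)$ on tangential gradients with the correct normalisation inherited from \eqref{eq:103}, is the delicate computation. Once this identification is in place, summing over $\T_h(t)$ is routine.
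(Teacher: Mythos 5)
The paper does not prove this lemma at all: it is quoted verbatim from Dziuk and Elliott (the cited Lemma~4.2), so there is no internal proof to compare against. Your element-by-element strategy is precisely the standard argument behind the cited result, and it is sound: each $E(t)$ is a flat patch evolving smoothly with velocity $V_h$, on which $\md_h$ is exactly the material derivative associated with $V_h$, so \myref[Lemma]{lem:transport} applies per element and the sum over $\T_h(t)$ gives \eqref{eq:discrete-transport} and, via the Leibniz rule, \eqref{eq:mh-transport}. Two refinements. First, no cancellation of interior-edge contributions is required: formula \eqref{eq:transport} carries no boundary term once the boundary of $\M(t)$ moves with the material velocity, and the consistency of the edge velocities (the restriction of $V_h$ to a shared edge is the same linear interpolant from either side) is exactly what verifies this hypothesis on each element, rather than producing terms that must cancel. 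Second, the step you defer as ``delicate'' --- identifying the correction tensor --- is in fact immediate from \eqref{eq:103} with $\A = \id$, since then $\md \A = 0$ and $\B(V_h) = (\nabla_{\Gamma_h} \cdot V_h)\,\id - 2 D_h(V_h)$ on each flat element; no separate tracking of the evolving metric is needed. Carrying this out, however, produces twice the tensor $\B_h(V_h) = \tfrac12 (\nabla_{\Gamma_h} \cdot V_h)\,\id - D_h(V_h)$ printed in the statement, so the lemma as stated carries a spurious factor of $\tfrac12$ relative to \eqref{eq:103} (a one-dimensional stretching example confirms that $(\nabla_{\Gamma_h} \cdot V_h)\,\id - 2 D_h(V_h)$ is the correct normalisation). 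This discrepancy is harmless for the rest of the paper, where $\B_h$ enters only through $L^\infty$ bounds such as \eqref{eq:106} and \eqref{eq:ch-b-bound}, but you should state the tensor with the correct factor rather than leave its identification open.
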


\begin{lemma}
  \label{lem:vh-bound}
  Under our assumptions on $\{ \Gamma_h(t) \}$, we have that
  \begin{equation}
    \label{eq:106}
    \sup_{t \in [0,T]} \left( \norm{ \nabla_{\Gamma_h} \cdot V_h }_{L^\infty(\Gamma_h(t))} + \norm{ \B_h(V_h) }_{L^\infty(\Gamma_h(t))} \right) \le c \sup_{t \in [0,T]} \norm{ v }_{C^2(\N_T)}.
  \end{equation}
\end{lemma}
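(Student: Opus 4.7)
The plan is to exploit the fact that $V_h$ is, on each element, the piecewise linear Lagrange interpolant of the smooth velocity $v \in C^2(\N_T)$ at the vertices, and that both $\nabla_{\Gamma_h} \cdot V_h$ and $\B_h(V_h)$ are built out of first-order tangential derivatives of $V_h$. It therefore suffices to show, uniformly over elements $E(t) \in \T_h(t)$ and times $t \in [0,T]$, a bound of the form
\begin{equation*}
  \norm{ \nabla_{\Gamma_h} V_h }_{L^\infty(E(t))} \le c \norm{ v }_{C^1(\N_T)},
\end{equation*}
with $c$ depending only on the shape regularity constant and the ambient neighbourhood.

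First I would fix $t \in [0,T]$ and a simplex $E(t) \in \T_h(t)$ with vertices $X_{j_1}(t), \ldots, X_{j_{n+1}}(t)$. By definition \eqref{eq:69},
\begin{equation*}
  V_h|_{E(t)}(x) = \sum_{k=1}^{n+1} v( X_{j_k}(t), t ) \, \phi_{j_k}^N(x,t),
\end{equation*}
so that, since $\nabla_{\Gamma_h}$ is a linear differential operator and the $v(X_{j_k},t)$ are constants,
\begin{equation*}
  \nabla_{\Gamma_h} V_h |_{E(t)} = \sum_{k=1}^{n+1} v(X_{j_k}(t),t) \otimes \nabla_{\Gamma_h} \phi_{j_k}^N.
\end{equation*}
Using the partition of unity $\sum_k \phi_{j_k}^N \equiv 1$ on $E(t)$, hence $\sum_k \nabla_{\Gamma_h} \phi_{j_k}^N = 0$, I can rewrite this as a sum of differences
\begin{equation*}
  \nabla_{\Gamma_h} V_h |_{E(t)} = \sum_{k=2}^{n+1} \bigl( v(X_{j_k},t) - v(X_{j_1},t) \bigr) \otimes \nabla_{\Gamma_h} \phi_{j_k}^N.
\end{equation*}
Applying the mean value theorem componentwise to the smooth extension of $v$ in $\N_T$ yields $\abs{ v(X_{j_k},t) - v(X_{j_1},t) } \le \norm{ v }_{C^1(\N_T)} \, \abs{ X_{j_k} - X_{j_1} } \le h \norm{ v }_{C^1(\N_T)}$, while the assumed quasi-uniformity of $\T_h(t)$ together with standard affine estimates on the reference simplex give $\norm{ \nabla_{\Gamma_h} \phi_{j_k}^N }_{L^\infty(E(t))} \le c\, h^{-1}$. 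The powers of $h$ cancel, delivering the required bound on $\norm{ \nabla_{\Gamma_h} V_h }_{L^\infty(E(t))}$.

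From this single estimate both quantities in \eqref{eq:106} follow at once: $\nabla_{\Gamma_h} \cdot V_h = \trace \nabla_{\Gamma_h} V_h$ is immediately controlled, and since $\B_h(V_h) = \tfrac12 (\nabla_{\Gamma_h} \cdot V_h)\id - D_h(V_h)$ with $D_h(V_h)$ the symmetric part of $\nabla_{\Gamma_h} V_h$, the same bound applies to $\norm{ \B_h(V_h) }_{L^\infty(E(t))}$. Taking the supremum over $E(t) \in \T_h(t)$ and $t \in [0,T]$, and noting that $\norm{v}_{C^1(\N_T)} \le \norm{v}_{C^2(\N_T)}$, closes the argument. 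The only real obstacle is bookkeeping: one has to be careful that the $h^{-1}$ blow-up of the gradients of the basis functions depends only on the shape-regularity constant (and not on $t$), which is exactly what the uniform-in-time quasi-uniformity assumption on $\T_h(t)$ provides.
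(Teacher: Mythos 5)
Your proof is correct, but it takes a different route from the paper. The paper's proof is a one-line appeal to the geometric estimates already recorded in Section~3: the stability of the lift \eqref{eq:79}, which transfers $L^\infty$ bounds on tangential gradients between $\Gamma_h(t)$ and $\Gamma(t)$, and the velocity comparison \eqref{eq:92}, which says $\abs{v-v_h}+h\abs{\nabla_\Gamma(v-v_h)}\le ch^2\norm{v}_{C^2(\N_T)}$; combining these with $v\in C^2(\N_T)$ bounds $\nabla_{\Gamma_h}\cdot V_h$ and $\B_h(V_h)$ by first comparing to the smooth velocity on the lifted elements. You instead argue directly and elementarily on each flat simplex: since $V_h$ is the nodal interpolant of $v$, the partition-of-unity trick turns $\nabla_{\Gamma_h}V_h$ into a sum of first differences of $v$ at the vertices times gradients of basis functions, and the $O(h)$ from the mean value theorem cancels the $O(h^{-1})$ inverse estimate. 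This is a clean, self-contained argument that needs only $v\in C^1$ and shape regularity (in fact the diameters cancel element by element, so quasi-uniformity is not even essential), whereas the paper's route reuses machinery it needs anyway for the error analysis and explains why the constant is phrased in terms of the $C^2$ norm. Two small points to tidy: you should note that the chord between two vertices of $E(t)$ lies in $\N(t)$ for $h$ sufficiently small so that the mean value theorem applies (the vertices lie on $\Gamma(t)$ and $\N(t)$ is an open neighbourhood), and that no ``smooth extension'' of $v$ is required since $v$ is already given on all of $\N_T$.
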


\begin{proof}
  The result follows from applying the geometric estimates \eqref{eq:79} and \eqref{eq:92} along with our assumption that $v \in C^2(\N_T)$. \qed
\end{proof}

\subsection{Finite element scheme}

We will assume that there exists a mesh size $h_0 > 0$ such that $\norm{ U_0 }_{H^1(\Gamma_{h,0})}$ is bounded independently of $h$ for $h < h_0$. This implies that there exists $C > 0$ such that for all $h < h_0$, we have
\begin{equation}
  \label{eq:ch-init-assumption}
  \E_0^h := \int_{\Gamma_{h,0}} \frac{\eps}{2} \abs{ \nabla_{\Gamma_h} U_0 }^2  + \frac1\eps \psi( U_0 ) \dd \sigma_h < C.
\end{equation}

\begin{remark}
  One particular choice of initial condition will be to take $U_0$ as a suitable approximation  of $u_0$ (for example, $\Pi_h u_0$ defined in \eqref{eq:102}) for $u_0 \in H^2(\Gamma_0)$.
\end{remark}

Our solution spaces will be
\begin{equation}
  \label{eq:ShT-defn}
  \begin{aligned}
    \tilde{S}_h^T & := \{ \phi_h  \in C( \G_{h,T} ) : \phi_h( \cdot, t ) \in S_h(t) \mbox{ for all } t \in [0,T] \} \\
    S_h^T & := \{ \phi_h  \in \tilde{S}_h^T : \md_h \phi_h \in C( \G_{h,T} ) \}.
  \end{aligned}
\end{equation}

The finite element scheme is: Given $U_0$, find $U_h \in S_h^T$ and $W_h \in \tilde{S}_h^T$ such that for almost every time $t \in (0,T)$
\begin{subequations}
  \label{eq:ch-fem}
  \begin{align}
    \label{eq:ch-fem-w}
    \dt \left( \int_{\Gamma_h(t)} U_h \phi_h \dd \sigma_h \right)
    + \int_{\Gamma_h(t)} \nabla_{\Gamma_h} W_h \cdot \nabla_{\Gamma_h} \phi_h \dd \sigma_h
    & = \int_{\Gamma_h(t)} U_h \md_h \phi_h \dd \sigma_h \\
    \label{eq:ch-fem-u}
    \int_{\Gamma_h(t)} \eps \nabla_{\Gamma_h} U_h \cdot \nabla_{\Gamma_h} \phi_h 
    + \frac1\eps \psi'(U_h) \phi_h \dd \sigma_h
    & = \int_{\Gamma_h(t)} W_h \phi_h \dd \sigma_h
  \end{align}
\end{subequations}
\begin{flushright}for all $\phi_h \in S_h(t)$,\end{flushright}
subject to the initial condition
\begin{equation}
  \label{eq:67}
  U_h( \cdot, 0 ) = U_0.
\end{equation}

The transport formula \eqref{eq:mh-transport} implies that, for $\phi_h \in S_h^T$, \eqref{eq:ch-fem-w} is equivalent to
\begin{equation}
  \label{eq:ch-fem-w'}
  \int_{\Gamma_h(t)} \md_h U_h \phi_h + U_h \phi_h \nabla_{\Gamma_h} \cdot V_h
  + \nabla_{\Gamma_h} W_h \cdot \nabla_{\Gamma_h} \phi_h \dd \sigma_h = 0.
\end{equation}

We can write these equations in matrix form. First, we will introduce vectors $\alpha(t), \beta(t) \in \R^{N}$ for the nodal values of $U_h$ and $W_h$ by
\begin{equation*}
  U_h( x, t ) = \sum_{j=1}^N \alpha_j(t) \phi_j^N( x, t ),
  \quad\!
  W_h( x, t ) = \sum_{j=1}^N \beta_j(t) \phi_j^N( x, t )
  \quad\!
  \mbox{ for } (x,t) \in \G_{h,T}.
\end{equation*}
In place of the bilinear forms, we have the mass matrix $\M(t)$ and stiffness matrix $\S(t)$:
\begin{align*}
  \M(t)_{ij} = \int_{\Gamma_h(t)} \phi_i^N \phi_j^N \dd \sigma_h 
  \qquad
  \S(t)_{ij} = \int_{\Gamma_h(t)} \nabla_{\Gamma_h} \phi_i^N \cdot \nabla_{\Gamma_h} \phi_j^N \dd \sigma_h,
\end{align*}
and in place of the non-linear term, we will write 
\begin{align*}
  \Psi( \alpha(t) )_j = \int_{\Gamma_h(t)} \psi' \left( \sum_{i=1}^N \alpha_i(t) \phi_i^N \right) \phi_j^N \dd \sigma_h.
\end{align*}
Using the transport of basis property (\myref[Lemma]{lem:transport-basis}), we can write \eqref{eq:ch-fem} as
\begin{subequations}
  \label{eq:ch-matrix}
  \begin{align}
    \dt \left( \M(t) \alpha(t) \right) + \S(t) \beta(t) & = 0 \\
    \eps \S(t) \alpha(t) + \frac1\eps \Psi( \alpha(t) ) - \M(t) \beta(t) & = 0.
  \end{align}
\end{subequations}
Alternatively, eliminating $\beta(t)$, this can be written as
\begin{align}
  \label{eq:ch-matrix-lumped}
  \dt \left( \M(t) \alpha(t) \right) + \S(t) \M(t)^{-1} \left( \eps \S(t) \alpha(t) + \frac1\eps \Psi( \alpha(t) ) \right ) = 0.
\end{align}
One could also use lumped mass integration \cite[Chapter 15]{Tho06} instead of the full mass matrix.

Notice that this is the same structure as a finite element discretisation of a Cahn-Hilliard equation posed on a planar domain. We now have time dependent matrices which need to be assembled on each time step. Various time stepping schemes have been considered for second-order parabolic problems on evolving surfaces \cite{DziEll12,DziLubMan11,LubManVen13}.

Next, we introduce abstract notation which permit a more compact writing of the analysis that follows:
\begin{gather*}
  m_h ( \eta_h, \phi_h ) =
  \int_{\Gamma_h(t)} \eta_h \phi_h \dd \sigma_h \qquad
  a_h ( \eta_h, \phi_h ) = 
  \int_{\Gamma_h(t)} \nabla_{\Gamma_h} \eta_h \cdot \nabla_{\Gamma_h} \phi_h \dd \sigma_h \\
  g_h ( V_h; \eta_h, \phi_h ) =
  \int_{\Gamma_h(t)} \eta_h \phi_h \nabla_{\Gamma_h} \cdot V_h \dd \sigma_h.
\end{gather*}
This lets us write \eqref{eq:ch-fem} as
\begin{align*}
 \dt m_h ( U_h, \phi_h ) + a_h ( W_h, \phi_h ) & = m_h ( U_h, \md_h \phi_h ) \\
  \eps a_h ( U_h, \phi_h ) + \frac1\eps m_h ( \psi'(U_h), \phi_h ) & = m_h ( W_h, \phi_h ),
\end{align*}
and \eqref{eq:ch-fem-w'} as
\begin{equation*}
  m_h( \md_h U_h, \phi_h ) + g_h( V_h; U_h, \phi_h ) + a_h( W_h, \phi_h ) = 0.
\end{equation*}
The transport laws from \myref[Lemma]{lem:discrete-transport} transfer to the abstract setting also:
\begin{align*}
  \dt m_h (\eta_h, \phi_h )
  & = m_h ( \md_h \eta_h, \phi_h ) +  m_h ( \eta_h, \md_h \phi_h ) 
  + g_h ( V_h; \eta_h, \phi_h ) \\
  \dt a_h ( \eta_h, \phi_h )
  & = a_h ( \md_h \eta_h, \phi_h ) +  a_h ( \eta_h, \md_h \phi_h ) 
  + b_h ( V_h; \eta_h, \phi_h ),
\end{align*}
where
\begin{equation*}
  b_h( V_h; \eta_h, \phi_h ) = \sum_{E(t) \in \T_h(t)} \int_{E(t)} \B_h( V_h ) \nabla_{\Gamma_h} \eta_h \cdot \nabla_{\Gamma_h} \phi_h \dd \sigma_h.
\end{equation*}

Under the above assumptions, the following estimates are possible.

\begin{theorem}
  [Well-posedness of the finite element scheme \eqref{eq:ch-fem}]
  \label{thm:ch-fem-wp}
  Under the above assumptions on $U_0$ and $\{ \Gamma_h(t) \}$, there exists a unique solution pair $(U_h, W_h) \in S_h^T \times \tilde{S}_h^T$, both with $C^1$ in time nodal values, to the finite element scheme \eqref{eq:ch-fem} and $\int_{\Gamma_h(t)} U_h \dd \sigma_h$ is conserved:
  \begin{equation}
    \label{eq:mass-conserv}
    \int_{\Gamma_h(t)} U_h \dd \sigma_h = \int_{\Gamma_{h,0}} U_0 \dd \sigma_h \quad \mbox{ for all } t \in (0,T).
  \end{equation}
  Furthermore, there exists $h_1$, $0 < h_1 < h_0$, and $C_0 >0$, which depend on the final time $T$ and the $H^1(\Gamma_{h,0})$-norm of the initial condition $U_0$, such that for all $h < h_1$ the following bound is satisfied:
  \begin{equation}
    \label{eq:72}
    \begin{aligned}
      \sup_{t \in (0,T)} \int_{\Gamma_h(t)} \frac\eps2 \abs{ \nabla_{\Gamma_h} U_h }^2 + \frac1\eps \psi( U_h ) \dd \sigma_h + \frac12 \int_0^T \norm{ \nabla_{\Gamma_h} W_h }_{L^2(\Gamma_h(t))}^2 \dd t \le C_0.
    \end{aligned}
  \end{equation}
\end{theorem}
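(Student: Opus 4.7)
My plan decomposes the proof into three parts: (i) local existence and uniqueness via reformulation as an ODE system; (ii) mass conservation; and (iii) an a priori energy estimate which both yields \eqref{eq:72} and extends the local solution to $[0,T]$.

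For (i), I would work with the matrix formulation \eqref{eq:ch-matrix}. Quasi-uniformity of $\T_h(t)$ and $v \in C^2(\N_T)$ guarantee that, for $h < h_1$ sufficiently small, the mass matrix $\M(t)$ is uniformly positive definite and the coefficients $\M, \dot\M, \S$ are $C^1$ in $t$. Eliminating $\beta = \M^{-1}(\eps \S \alpha + \tfrac1\eps \Psi(\alpha))$ reduces the first equation to an ODE for $\alpha$ with right-hand side smooth in $t$ and polynomial (hence locally Lipschitz) in $\alpha$, so Picard-Lindel\"of produces a unique maximal solution $\alpha \in C^1([0,T^*);\R^N)$, determining $U_h \in S_h^T$ and $W_h \in \tilde S_h^T$ with $C^1$ nodal values on $[0,T^*)$. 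Mass conservation in (ii) follows immediately by testing \eqref{eq:ch-fem-w} with the constant $\phi_h \equiv 1$: by \myref[Lemma]{lem:transport-basis} applied to the partition of unity $1 = \sum_j \phi_j^N$, this test function satisfies $\md_h \phi_h = 0$ and $\nabla_{\Gamma_h} \phi_h = 0$, so $\dt \int_{\Gamma_h(t)} U_h \dd \sigma_h = 0$.

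For (iii), the energy identity is obtained by testing \eqref{eq:ch-fem-w'} with $\phi_h = W_h$ and \eqref{eq:ch-fem-u} with $\phi_h = \md_h U_h$, which lies in $S_h(t)$ since $\md_h U_h = \sum_j \dot\alpha_j(t)\, \phi_j^N$. Equating the resulting expressions for $m_h(W_h,\md_h U_h)$, then applying \eqref{eq:ah-transport} to write $a_h(U_h, \md_h U_h) = \tfrac12 \dt a_h(U_h, U_h) - \tfrac12 b_h(V_h; U_h, U_h)$ and the chain rule together with \eqref{eq:discrete-transport} to write $m_h(\psi'(U_h), \md_h U_h) = \dt \int \psi(U_h)\dd\sigma_h - \int \psi(U_h)\,\nabla_{\Gamma_h}\cdot V_h \dd\sigma_h$, yields
\begin{equation*}
  \dt F(t) + a_h(W_h, W_h) = \tfrac{\eps}{2} b_h(V_h; U_h, U_h) + \tfrac{1}{\eps}\int_{\Gamma_h(t)} \psi(U_h)\, \nabla_{\Gamma_h}\cdot V_h \dd \sigma_h - g_h(V_h; U_h, W_h),
\end{equation*}
where $F(t) := \tfrac{\eps}{2} a_h(U_h, U_h) + \tfrac{1}{\eps}\int_{\Gamma_h(t)} \psi(U_h) \dd \sigma_h$. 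By \myref[Lemma]{lem:vh-bound} the first two remainder terms are bounded by $c F(t)$.

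The main obstacle is controlling $g_h(V_h; U_h, W_h)$, since only $\|\nabla_{\Gamma_h} W_h\|_{L^2}$ appears on the left-hand side. I would split $W_h = \bar W_h + (W_h - \bar W_h)$, estimate the oscillation by $c\|\nabla_{\Gamma_h} W_h\|_{L^2}$ via a discrete Poincar\'e inequality uniform in $h,t$, and compute $\bar W_h$ by testing \eqref{eq:ch-fem-u} with $\phi_h \equiv 1$ to obtain $|\Gamma_h(t)| \bar W_h = \tfrac1\eps \int_{\Gamma_h(t)} \psi'(U_h)\dd\sigma_h$. Mass conservation controls the mean of $U_h$, so by Poincar\'e $\|U_h\|_{L^2}^2 \le c(1 + F(t))$; combined with $H^1(\Gamma_h(t)) \hookrightarrow L^6(\Gamma_h(t))$ (uniform in $h,t$ through the lift construction of Section 3.1) and the elementary bound $u^4 \le 8\psi(u) + 4$, this yields $\int_{\Gamma_h(t)} |\psi'(U_h)| \dd\sigma_h \le c(1 + F(t))$. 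Young's inequality then absorbs a fraction of $a_h(W_h, W_h)$ into the left-hand side and produces a differential inequality for $F$ that closes via a Gronwall-type argument on $[0, T^*)$, yielding \eqref{eq:72} with $C_0$ depending on $T$, $\eps$, and $E_0^h$. The $h$-uniform bound on $\|U_0\|_{H^1(\Gamma_{h,0})}$ makes this a priori estimate uniform in $h$, prevents blow-up of $\alpha$, and forces $T^* = T$.
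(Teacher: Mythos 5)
Your parts (i) and (ii) and the derivation of the energy identity coincide with the paper's proof: local existence and uniqueness via the ODE system \eqref{eq:ch-matrix}, mass conservation by testing with $\phi_h\equiv 1$, and the identity
$\dt\big(\tfrac{\eps}{2}a_h(U_h,U_h)+\tfrac1\eps m_h(\psi(U_h),1)\big)+a_h(W_h,W_h)=\tfrac\eps2 b_h(V_h;U_h,U_h)+\tfrac1\eps g_h(V_h;\psi(U_h),1)-g_h(V_h;U_h,W_h)$
obtained by testing with $W_h$ and $\md_h U_h$. You also correctly identify the crux as the term $g_h(V_h;U_h,W_h)$. However, your proposed resolution of that term does not close. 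Writing $F(t)$ for the energy, your splitting gives the oscillation part $\int U_h(W_h-\bar W_h)\nabla_{\Gamma_h}\cdot V_h\le \tfrac14 a_h(W_h,W_h)+c(1+F)$, which is fine, but the mean part is $\bar W_h\int_{\Gamma_h(t)}U_h\,\nabla_{\Gamma_h}\cdot V_h\dd\sigma_h$. Testing \eqref{eq:ch-fem-u} with $1$ gives $|\Gamma_h(t)|\,\bar W_h=\tfrac1\eps\int\psi'(U_h)\dd\sigma_h$, and the best available bounds are $\abs{\int\psi'(U_h)}\le c(1+F)^{3/4}$ (via $\int\abs{U_h}^3\le(\int U_h^4)^{3/4}\abs{\Gamma_h}^{1/4}$ and $U_h^4\le 8\psi(U_h)+2$) and $\abs{\int U_h\nabla_{\Gamma_h}\cdot V_h}\le c\norm{U_h}_{L^1}\le c(1+F)^{1/2}$; mass conservation controls $\int U_h$ but not $\int\abs{U_h}$, and $\nabla_{\Gamma_h}\cdot V_h$ is not close to a constant, so neither factor improves. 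The product is therefore of order $(1+F)^{5/4}$, it contains no gradient of $W_h$ to absorb by Young's inequality, and the resulting differential inequality $\dt F\le c(1+F)^{5/4}+\dots$ blows up in finite time depending on the data: Gronwall does not yield a bound on an arbitrary interval $[0,T]$. A symptom of the gap is that your argument never produces the threshold $h<h_1$ that appears in the statement.

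The paper's way around this is to keep the cubic nonlinearity paired with $U_h$ rather than with the constant $1$. It writes $g_h(V_h;U_h,W_h)\approx m_h\big(W_h,\Lambda_h(U_h(\nabla_\Gamma\cdot v)^{-\ell})\big)$ up to an $O(h^2)$ velocity error, where $\Lambda_h$ is the $L^2(\Gamma_h(t))$ projection onto $S_h(t)$, and then uses $\Lambda_h(U_h(\nabla_\Gamma\cdot v)^{-\ell})$ as a test function in \eqref{eq:ch-fem-u}. The dangerous term then becomes $\tfrac1\eps m_h(\psi'(U_h),U_h(\nabla_\Gamma\cdot v)^{-\ell})$, and since $\psi'(u)u=u^4-u^2\le 4\psi(u)+u^2-1$ this is \emph{linear} in the energy density. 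The price is the $O(h)$ projection error \eqref{eq:5}, which generates the terms $h\,a_h(U_h,U_h)^2+h^2\,a_h(U_h,U_h)^3$ (together with an auxiliary $L^2$ bound on $W_h$ obtained by testing \eqref{eq:ch-fem-u} with $W_h$); these superlinear terms carry powers of $h$ and are handled by the generalised Gronwall inequality \eqref{eq:7}, which is precisely where the smallness condition $h<h_1$ and the form of $C_0$ come from. To repair your proof you would need to replace your treatment of the mean part by an argument of this type (or otherwise produce a bound on $g_h(V_h;U_h,W_h)$ that is at most linear in $F$ up to terms absorbable by $a_h(W_h,W_h)$ or carrying positive powers of $h$).
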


The proof will be shown after we have proven some intermediate results.

\subsection{Lifted finite elements}

The following analysis will rely on lift operators defined using a time dependent closest point operator $p$ \eqref{eq:101}. This lifting process will also be applied to the surface triangulation. This will induce a further discrete material velocity $v_h$ which will describe how the lifts of triangles on $\{\Gamma(t)\}$ evolve.

First, for a function $\eta_h \colon \G_{h,T} \to \R$, we define its lift, $\eta_h^\ell \colon \G_T \to \R$, implicitly, by:
\begin{equation}
  \label{eq:77}
  \eta_h^\ell( p(x,t), t ) = \eta_h( x, t ),
\end{equation}
and, for a function $\eta \colon \G_T \to \R$, we define its inverse lift, $\eta^{-\ell} \colon \G_{h,T} \to \R$ by
\begin{equation}
  \label{eq:76}
  \eta^{-\ell}( x, t ) := \eta( p( x ,t ), t ).
\end{equation}
It is clear that these operations are inverses of each other
\begin{equation*}
  (\eta^{-\ell})^\ell = \eta \qquad \mbox{ and } \qquad
  ( \eta_h^\ell )^{-\ell} = \eta_h.
\end{equation*}

Furthermore, \eqref{eq:101} allows us to define a lifted triangulation $\T_h^\ell(t)$ of $\Gamma(t)$ by
\begin{equation}
  \label{eq:80}
  \T_h^\ell = \{ e(t) = E^\ell(t) : E(t) \in \T_h(t) \}, \qquad
  E^\ell(t) := \{ p(x,t) : x \in E(t) \}.
\end{equation}
This defines an exact triangulation of $\Gamma(t)$.

\begin{lemma}
  [Stability of lift \mycite{for $q=2$}{DziEll07}]
  \label{lem:ch-norm-equiv}
  Let $\eta_h \colon \G_{h,T} \to \R$, with lift $\eta_h^\ell \colon \G_T \to \R$, be such that the following quantities exist. For $1 \le q \le +\infty$, there exists $c_1, c_2 > 0$, independent of $h$, but depending on $q$, such that for each time $t \in [0,T]$ and each element $E(t) \in \T_h(t)$ with associated lifted element $e(t) \in \T_h^\ell(t)$, the following hold:
  \begin{subequations}
    \label{eq:79}
    \begin{align}
      c_1 \norm{ \eta_h^\ell }_{L^q(e(t))} 
      & \le \norm{  \eta_h }_{L^q(E(t))}
      \le c_2 \norm{ \eta_h^\ell }_{L^q(e(t))} \\
      \label{eq:79b}
      c_1 \norm{ \nabla_\Gamma \eta_h^\ell }_{L^q(e(t))} 
      & \le \norm{ \nabla_{\Gamma_h} \eta_h }_{L^q(E(t))}
      \le c_2 \norm{ \nabla_\Gamma \eta_h^\ell }_{L^q(e(t))} \\
      \norm{ \nabla_{\Gamma_h}^2 \eta_h }_{L^2(E(t))}
      & \le c \left( \norm{ \nabla_\Gamma^2 \eta_h^\ell }_{L^2(e(t))} + h \norm{ \nabla_\Gamma \eta_h^\ell }_{L^2(e(t))} \right).
    \end{align}
  \end{subequations}
\end{lemma}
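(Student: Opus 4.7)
The plan is to argue element by element: on each $E(t) \in \T_h(t)$ the closest point projection $p(\cdot,t)$ realises a $C^1$ diffeomorphism onto the lifted element $e(t) \in \T_h^\ell(t)$, and all three estimates reduce to a change of variables together with a chain-rule relation between $\nabla_{\Gamma_h} \eta_h$ and $\nabla_\Gamma \eta_h^\ell$.

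First I would record element-wise geometric estimates. Because the vertices of $E(t)$ lie on $\Gamma(t)$ and $\diam E(t) \le h$, polynomial interpolation together with the $C^2$-smoothness of $d$ gives $\norm{ d(\cdot,t) }_{L^\infty(E(t))} \le c h^2$ and $\norm{ \nu - \nu_h }_{L^\infty(E(t))} \le c h$, uniformly in $t$. Using $\nabla p = P - d\H$ and composing with the tangent-space projections $P_h$ on $E(t)$ and $P$ on $e(t)$, the tangential Jacobian $\delta_h$ of $p|_{E(t)}$ satisfies $\abs{ \delta_h - 1 } \le c h^2$ uniformly on all elements. A direct change of variables then gives the $L^q$ equivalence in the first line of the lemma.

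For the gradient bound, differentiating the defining relation $\eta_h^\ell(p(x,t),t) = \eta_h(x,t)$ tangentially on $E(t)$ yields a pointwise identity
\[
  \nabla_{\Gamma_h} \eta_h(x) = B(x,t)\,\nabla_\Gamma \eta_h^\ell(p(x,t),t),
\]
with $B(x,t) = P_h(\id - d\H)\bigl(\id - \nu \otimes \nu_h/(\nu\cdot \nu_h)\bigr)$ accounting for the mismatched tangent spaces and the normal perturbation; under the element-wise estimates, both $B$ and its inverse have operator norm $1 + O(h^2)$, so combining with the $L^q$ change of variables on $E(t)$ establishes \eqref{eq:79b}.

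For the Hessian bound, I would differentiate the above pointwise relation once more. The term in which both derivatives fall on $\eta_h^\ell$ is controlled by $\norm{ \nabla_\Gamma^2 \eta_h^\ell }_{L^2(e(t))}$ with a bounded coefficient after change of variables. Each remaining term differentiates a factor of $B$; since $P_h$ is constant on $E(t)$, the only contributions come from derivatives of $d$, $\H$, $P$ and $\nu$ evaluated at $p(x,t)$, and each of these is either multiplied by the factor $d = O(h^2)$ or reduces to an $O(h)$ quantity after tangential projection on $E(t)$ (using $\abs{ P_h \nu } = \abs{ P_h(\nu-\nu_h) } \le c h$). Each such term therefore contributes at most an $O(h)$-weighted first derivative of $\eta_h^\ell$, yielding the stated correction. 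I expect this last estimate to be the main obstacle: one has to book-keep every term produced by differentiating $B$ and consistently exploit the element-wise bounds $d = O(h^2)$ and $\nu - \nu_h = O(h)$, which in turn depend on the vertices of $E(t)$ lying on $\Gamma(t)$ and the $C^2$ regularity of the signed distance function.
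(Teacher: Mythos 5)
The paper gives no proof of this lemma: it is quoted directly from \cite{DziEll07} (for $q=2$), with the extension to general $q$ and the Hessian bound taken from the same circle of geometric estimates (cf.\ \cite{Dzi88,DziEll13}). Your sketch reproduces exactly that standard argument --- change of variables via the closest-point map with $\abs{d}\le ch^2$, $\abs{\nu-\nu_h}\le ch$ and $\mu_h = 1+O(h^2)$, the chain-rule identity $\nabla_{\Gamma_h}\eta_h = P_h(\id-d\H)\nabla_\Gamma\eta_h^\ell$, and for the Hessian the crucial facts that $P_h$ is constant on the flat element and $\abs{P_h\nu}=\abs{P_h(\nu-\nu_h)}\le ch$ so that every term produced by differentiating the coefficient matrix is weighted by $O(h)$ or $O(h^2)$ --- so it is essentially the same approach and is correct.
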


This result allows us to give Sobolev embeddings for discrete surfaces:

\begin{lemma}
  \label{lem:sobolev-embedh}
  For $\Gamma_h(t)$ as above,
  \begin{align}
    W^{1,q}(\Gamma_h(t)) \subset
    \begin{cases} 
      L^{ nq / ( n-q )}( \Gamma_h(t) ) & \mbox{ for }  q < n \\
      L^\infty( \Gamma_h(t) ) & \mbox{ for } q > n.
    \end{cases}
  \end{align}
  Furthermore there exists a constant $c = c(n,q)$, independent of $h$, such that for any $\eta_h \in W^{1,q}(\Gamma_h(t))$
  \begin{subequations}
    \begin{align}
      \norm{ \eta_h }_{L^{n q / (n-q)}(\Gamma_h(t))} & \le c \norm{ \eta_h }_{W^{1,q}(\Gamma_h(t))} && \mbox{ for } q < n \\
      \norm{ \eta_h }_{L^\infty(\Gamma_h(t))} & \le c \norm{ \eta_h }_{W^{1,q}(\Gamma_h(t))} && \mbox{ for } q > n.
    \end{align}
  \end{subequations}
\end{lemma}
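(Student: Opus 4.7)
The natural strategy is to transfer the question to the smooth surface $\Gamma(t)$ via the lift operator, apply the smooth Sobolev embedding of Lemma \ref{lem:sobolev-embed} there, and then transfer the conclusion back. The key point is that every step preserves constants that are independent of $h$.

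More precisely, given $\eta_h \in W^{1,q}(\Gamma_h(t))$, I would first define the lift $\eta_h^\ell \colon \Gamma(t) \to \R$ as in \eqref{eq:77}. Applying the element-wise stability bounds \eqref{eq:79} and \eqref{eq:79b}, raising to the $q$-th power and summing over $E(t) \in \T_h(t)$ (respectively taking the essential supremum when $q = \infty$), one obtains the global equivalence
\begin{equation*}
  c_1 \norm{ \eta_h^\ell }_{W^{1,q}(\Gamma(t))} \le \norm{ \eta_h }_{W^{1,q}(\Gamma_h(t))} \le c_2 \norm{ \eta_h^\ell }_{W^{1,q}(\Gamma(t))},
\end{equation*}
with $c_1, c_2 > 0$ independent of $h$ (and of $t$, since the constants in Lemma \ref{lem:ch-norm-equiv} are uniform). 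In particular, $\eta_h^\ell \in W^{1,q}(\Gamma(t))$.

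Next, apply Lemma \ref{lem:sobolev-embed} to $\eta_h^\ell$ on the smooth surface $\Gamma(t)$. This yields, in the case $q < n$, $\eta_h^\ell \in L^{nq/(n-q)}(\Gamma(t))$ with
\begin{equation*}
  \norm{ \eta_h^\ell }_{L^{nq/(n-q)}(\Gamma(t))} \le c \norm{ \eta_h^\ell }_{W^{1,q}(\Gamma(t))},
\end{equation*}
and in the case $q > n$, $\eta_h^\ell \in C^0(\Gamma(t)) \subset L^\infty(\Gamma(t))$ with the analogous estimate. Now one uses the same element-wise $L^r$ stability from \eqref{eq:79} (with $r = nq/(n-q)$ or $r = \infty$) to pass back to $\Gamma_h(t)$, obtaining $\norm{ \eta_h }_{L^r(\Gamma_h(t))} \le c_2 \norm{ \eta_h^\ell }_{L^r(\Gamma(t))}$. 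Chaining these three estimates and absorbing all constants into a single $c = c(n,q)$ independent of $h$ yields the claimed embeddings.

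I do not expect any serious obstacle: the proof is essentially a transplantation of the smooth Sobolev embedding through the bi-Lipschitz lift, and the only care needed is to make sure the constants in \eqref{eq:79} can indeed be taken uniform in $t$ and $h$, which is guaranteed by the uniform quasi-uniformity of the triangulation and the smoothness of the closest point map $p$ on $\N_T$. The fact that $\Gamma_h(t)$ is merely piecewise smooth is harmless because the lift transports the question to the genuinely smooth $\Gamma(t)$ before any Sobolev inequality is invoked.
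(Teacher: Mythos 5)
Your proposal is correct and follows exactly the paper's argument: apply the smooth Sobolev embedding of Lemma~\ref{lem:sobolev-embed} to the lift $\eta_h^\ell$ and transfer the norms back and forth with the stability estimates of Lemma~\ref{lem:ch-norm-equiv}. The paper gives only a two-line sketch of this, and your write-up fills in the same steps with appropriate care about the uniformity of the constants.
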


\begin{proof}
  To see the embedding result, we apply \myref[Lemma]{lem:sobolev-embed}. The bounds then follow using the stability of the lift (\myref[Lemma]{lem:ch-norm-equiv}). \qed
\end{proof}

We will write $S_h^\ell(t)$ for the space of lifted finite element functions:
\begin{equation*}
  S_h^\ell(t) = \{ \varphi_h = \phi_h^\ell : \phi_h \in S_h(t) \}.
\end{equation*}
This space comes with the standard approximation property:
\begin{proposition}
  [Approximation property]
  \label{prop:ch-approx}
  The Lagrangian interpolation operator $I_h \colon C(\Gamma(t)) \to S_h^\ell(t)$ is well defined and, for $z \in H^2(\Gamma(t))$, satisfies the bound
  \begin{equation}
    \label{eq:ch-approx}
    \norm{ z - I_h z }_{L^2(\Gamma(t))} + h \norm{ \nabla_\Gamma ( z - I_h z ) }_{L^2(\Gamma(t))} \le c h^2 \norm{ z }_{H^2(\Gamma(t))}.
  \end{equation}
  Let $1 \le q \le \infty$ be such that $H^1(\Gamma(t))$ embeds into $L^q(\Gamma(t))$, then
  \begin{equation}
    \label{eq:ch-approxp}
    \norm{ \nabla_\Gamma ( z - I_h z ) }_{L^q(\Gamma(t))} \le c h^{1 + \min(0,n/q - n/2)} \norm{ z }_{H^2(\Gamma(t))}.
  \end{equation}
\end{proposition}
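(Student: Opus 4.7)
The strategy is to pull the problem back to the discrete surface via the inverse lift, apply classical element-wise Bramble-Hilbert arguments on the affine triangles of $\T_h(t)$, and then transfer the resulting bounds back to $\Gamma(t)$ using the norm equivalence of \myref[Lemma]{lem:ch-norm-equiv}. By the Sobolev embedding of \myref[Lemma]{lem:sobolev-embed}, $H^2(\Gamma(t)) \hookrightarrow C^0(\Gamma(t))$ for $n \le 3$, so the pointwise nodal values of $z$ are meaningful and $I_h z$ is well-defined as the lift of the standard continuous piecewise linear Lagrange interpolant of $z^{-\ell}$ on $\T_h(t)$.

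For \eqref{eq:ch-approx} I work element-wise. On each $E(t) \in \T_h(t)$, mapping to a unit reference triangle via an affine transformation of scale $O(h)$ and applying the Bramble-Hilbert lemma, using that the Lagrange interpolant preserves affine polynomials, gives
\begin{equation*}
\norm{z^{-\ell} - (I_h z)^{-\ell}}_{L^2(E(t))} + h \norm{\nabla_{\Gamma_h}(z^{-\ell} - (I_h z)^{-\ell})}_{L^2(E(t))} \le c h^2 \norm{z^{-\ell}}_{H^2(E(t))}.
\end{equation*}
Squaring, summing over $E(t) \in \T_h(t)$, and invoking \myref[Lemma]{lem:ch-norm-equiv} to convert between discrete and continuous surface norms yields \eqref{eq:ch-approx}.

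For \eqref{eq:ch-approxp} the crucial observation is that the hypothesis $H^1(\Gamma(t)) \hookrightarrow L^q(\Gamma(t))$ is equivalent to $H^2(\Gamma(t)) \hookrightarrow W^{1,q}(\Gamma(t))$, via the chain $z \in H^2 \Rightarrow \nabla z \in H^1 \hookrightarrow L^q$. On the reference element this embedding combined with Bramble-Hilbert gives $\norm{\nabla(\hat z - \hat I \hat z)}_{L^q(\hat E)} \le c |\hat z|_{H^2(\hat E)}$. Scaling back to an element of diameter $O(h)$ contributes a factor $h^{n/q - 1}$ on the left and $h^{2 - n/2}$ on the right, producing the exponent $1 + n/q - n/2$. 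For $q \le 2$ the exponent is cut off at $1$ because H\"older's inequality on the compact $\Gamma(t)$ combined with the first estimate already gives the stronger bound $h^1$; this is exactly the $\min(0, \cdot)$ in the statement. The element-wise bounds are globalised via $\ell^2 \hookrightarrow \ell^q$ for $q \ge 2$.

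The main subtlety lies in the careful bookkeeping of scaling exponents in the $L^q$ case and the geometric perturbation between $\Gamma_h(t)$ and $\Gamma(t)$. The latter is a uniformly bounded, non-degenerate perturbation of order $h^2$ that is already absorbed into the constants of \myref[Lemma]{lem:ch-norm-equiv}, so with that result in hand the argument reduces to classical planar finite element interpolation theory applied triangle by triangle.
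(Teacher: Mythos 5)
Your proof is correct and follows essentially the same route as the paper, which simply cites Dziuk (1988) for the $q=2$ case and Ciarlet's standard interpolation theory for $q\neq 2$; your element-wise Bramble--Hilbert argument, the scaling exponent $h^{n/q-1}\cdot h^{2-n/2}$, and the transfer between $\Gamma_h(t)$ and $\Gamma(t)$ via \myref[Lemma]{lem:ch-norm-equiv} are precisely the content of those references. The only point worth recording explicitly is that the nodes of $\T_h(t)$ lie on $\Gamma(t)$, so the lift of the affine nodal interpolant on $E(t)$ does interpolate $z$ at the lifted nodes, which is what makes the pull-back/push-forward identification of the two interpolants legitimate.
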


\begin{proof}
  The proof is given in \cite{Dzi88} for the case $q=2$ and can be easily extended using standard interpolation theory \cite[Theorem~3.1.6]{Cia78} to the case $q \neq 2$. \qed
\end{proof}

\begin{remark}
  For the remainder of the paper, we will write lower case letters for the lift finite element functions with capital letters (i.e. $U_h^\ell = u_h$ and $W_h^\ell = w_h$) and $\varphi_h$ for the lift of $\phi_h$.
\end{remark}

The motion of the edges of the simplices in the triangulation $\{ \T_h^\ell(t) \}$ defines a discrete material velocity for the surface $\{ \Gamma(t) \}$. Let $X(t)$ be the trajectory of a point on $\{ \Gamma_h(t) \}$ with velocity $V_h(X(t),t)$. We set $Y(t) = p(X(t),t)$ then define $v_h$ by
\begin{equation}
  \label{eq:75}
  v_h( Y(t), t ) := \dot{Y}(t) = \frac{\partial p}{\partial t} ( X(t), t) + \nabla p( X(t), t ) \cdot V_h( X(t), t),
\end{equation}
so that for $x \in \Gamma_h(t)$, using \eqref{eq:101}, we have
\begin{equation*}
  v_h( p(x,t), t ) = ( P (x,t) - d (x,t)  \H( x,t ) ) V_h(x,t) - d_t(x,t) \nu(x) - d(x,t) \nu_t(x,t).
\end{equation*}
This defines another discrete material derivative for functions $\varphi_h( \cdot, t ) \in S_h^\ell(t)$. We define the discrete material derivative on $\G_T$ element-wise by
\begin{equation}
  \label{eq:78}
  \md_h \varphi_h := \partial_t \varphi_h + v_h \cdot \nabla \varphi_h.
\end{equation}
A quick calculation \cite{DziEll13} shows that for all $\phi_h \in S_h(t)$, with lift $\varphi_h \in S_h^\ell(t)$,
\begin{equation}
  \label{eq:81}
  \md_h \varphi_h = ( \md_h \phi_h )^\ell.
\end{equation}
It can be shown, similarly to \eqref{eq:71}, that $\md_h (\phi_j^N)^\ell = 0$. We will write $S_h^{\ell,T}$ and $\tilde{S}_h^{\ell,T}$ for the lifts of the spaces $S_h^T$ and $\tilde{S}_h^T$ defined by \eqref{eq:ShT-defn}. It is clear that from \myref[Lemma]{lem:ch-norm-equiv} that
\begin{equation*}
  S_h^{\ell,T} \subset H^1(\G_T) \quad \mbox{ and } \quad \tilde{S}_h^{\ell,T} \subset L^2_{H^1}.
\end{equation*}

We remark that the continuous and discrete material velocities on $\{\Gamma(t)\}$ only differ in the tangential direction. This implies that the difference between the two material derivatives on $\{ \Gamma(t) \}$ only depends on the tangential gradient of the original function and not on any time derivatives.

These definitions also permit transport formulae:
\begin{lemma}
  [Transport lemma for smooth triangulated surfaces \mycite{Lemma~4.2}{DziEll13}]
  Let $\{\Gamma(t)\}$ be an evolving surface decomposed at each time into a family curved elements $\{ \T_h^\ell(t) \}$ whose edges evolve with velocity $v_h$. Then the following relations hold for functions $\eta_h, \varphi_h \colon \G_T \to \R$ such that the following quantities exist:
  \begin{equation}
    \label{eq:82}
    \dt \int_{\Gamma(t)} \eta_h \dd \sigma 
    = \int_{\Gamma(t)} \md_h \eta_h + \eta_h \nabla_\Gamma \cdot v_h \dd \sigma,
  \end{equation}
  and
  \begin{align}
    \label{eq:83}
    \dt m( \eta_h, \varphi_h ) & = m( \md_h \eta_h, \varphi_h ) + m( \eta_h, \md_h \varphi_h ) + g( v_h; \eta_h, \varphi_h ) \\
    \dt a( \eta_h, \varphi_h ) & = a( \md_h \eta_h, \varphi_h ) + a( \eta_h, \md_h \varphi_h ) + b( v_h; \eta_h, \varphi_h ).
  \end{align}
\end{lemma}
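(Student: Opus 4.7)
The plan is to obtain the three identities as element-wise consequences of the continuous transport formula stated in Lemma~\ref{lem:transport}, then sum over the lifted triangulation. The three identities are of exactly the forms \eqref{eq:transport}, \eqref{eq:96} and \eqref{eq:103} in that lemma, with the smooth material velocity $v$ replaced by the piecewise-smooth surface velocity $v_h$ and the smooth material derivative $\md$ replaced by $\md_h$; so structurally the work is to check the hypotheses of Lemma~\ref{lem:transport} on each curved element $e(t)\in\T_h^\ell(t)$ and then assemble.

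First I would verify the setting required by Lemma~\ref{lem:transport}. By construction \eqref{eq:75}, $v_h$ is the time derivative of $Y(t)=p(X(t),t)$ along a trajectory $X(t)$ on $\Gamma_h(t)$, and since $Y(t)\in\Gamma(t)$ for all $t$, the velocity $v_h$ must be a material velocity for $\Gamma(t)$: its normal component coincides with $v_\nu$ and only its tangential component differs from $v$. Consequently one may decompose $v_h=v_\nu+v_{h,\tau}$ with $v_{h,\tau}$ tangential, which is exactly the hypothesis needed to invoke Lemma~\ref{lem:transport} on each piece. Moreover, by definition of $\T_h^\ell(t)$, each curved element $e(t)=E^\ell(t)$ with $E(t)\in\T_h(t)$ has vertices $p(X_j(t),t)$, so its boundary $\partial e(t)$ evolves precisely with velocity $v_h$; this gives $\M(t)=e(t)$ as an admissible portion of $\Gamma(t)$ in the sense of Lemma~\ref{lem:transport}.

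Having set this up, I would apply Lemma~\ref{lem:transport} to $\M(t)=e(t)$ with the material velocity $v_h$. The identity \eqref{eq:transport} gives
\begin{equation*}
  \dt\int_{e(t)}\eta_h\dd\sigma
  = \int_{e(t)}\md_h\eta_h+\eta_h\,\nabla_\Gamma\cdot v_h\dd\sigma,
\end{equation*}
and summing over $e(t)\in\T_h^\ell(t)$ yields \eqref{eq:82} because $\Gamma(t)=\bigcup_{e\in\T_h^\ell(t)}e(t)$ (with pairwise intersections of surface-measure zero) and the identity carries no boundary contribution. The same scheme applied to \eqref{eq:96} with $\eta=\eta_h$, $\varphi=\varphi_h$ and summed over the elements delivers the mass transport identity \eqref{eq:83}. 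For the stiffness identity, I would use \eqref{eq:103} with $\A=\id$, so that the tensor $\B(v_h)$ reduces to $\nabla_\Gamma\cdot v_h\,\id-2D(v_h)$; applied element-wise on $e(t)$ and summed, this produces
\begin{equation*}
  \dt a(\eta_h,\varphi_h)
  = a(\md_h\eta_h,\varphi_h)+a(\eta_h,\md_h\varphi_h)+b(v_h;\eta_h,\varphi_h),
\end{equation*}
where the right-hand side is understood as a sum over the curved elements (consistent with the fact that $\nabla_\Gamma\eta_h$ is only piecewise smooth across inter-element edges).

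The only subtlety, and what I expect to be the main obstacle, is justifying the decomposition of the time derivative on the union of elements as a sum of element-wise time derivatives: one needs that the motion of inter-element edges (shared by adjacent curved triangles) is consistent from both sides so that no interfacial terms appear. This is guaranteed because the edges $\partial e(t)$ evolve with the globally defined, continuous velocity $v_h$ (inherited from the continuous velocity $V_h$ on $\Gamma_h(t)$ via the closest-point lift $p(\cdot,t)$), so when summing the element-wise transport identities the contributions from shared edges of adjacent elements cancel. Once this is observed, the three identities follow directly from Lemma~\ref{lem:transport}, and the abstract bilinear-form statements follow by writing the integrals in the notation of $m$, $a$, $g$, and $b$.
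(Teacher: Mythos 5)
Your proposal is correct, and it is essentially the standard argument: the paper itself offers no proof of this lemma, only the citation to \mycite{Lemma~4.2}{DziEll13}, and the derivation there is exactly what you describe --- apply the transport formulae of \myref[Lemma]{lem:transport} on each curved element with the velocity $v_h$ and its induced material derivative $\md_h$, using that $v_h$ differs from $v$ only tangentially so that it is an admissible material velocity for $\{\Gamma(t)\}$, and then sum over $\T_h^\ell(t)$. One small refinement to your final paragraph: no cancellation of interfacial contributions is actually required, since \eqref{eq:transport} contains no explicit boundary term; the summation step only needs that the curved elements continue to tile $\Gamma(t)$ up to sets of surface measure zero at every time, which holds because adjacent elements share edges transported by the same globally defined velocity $v_h$.
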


\subsection{Proof of finite element scheme well-posedness}

Before showing stability of the finite element scheme, we will show a generalised Gronwall inequality:
\begin{lemma}
  Let $y_h(t), z_h(t) \ge 0$ and satisfy the following differential inequality for $\tilde{C} \ge 0$
  \begin{equation}
    \label{eq:6}
    \begin{aligned}
      \dt y_h(t) + z_h(t) & \le c \big( y_h(t) + h y_h(t)^2 + h^2 y_h(t)^3 + \tilde{C} \big) \quad \mbox{ for } 0 \le t \le T, \\
      y_h(0) & = y_0.
    \end{aligned}
  \end{equation}
  Let $h$ be sufficiently small so that $1 - h ( y_0 + \tilde{C} )^2 ( e^{2(1+h)ct} - 1 ) > 0$, then $y_h, z_h$ satisfy the bound
  \begin{equation}
    \label{eq:7}
    \begin{aligned}
      & y_h(t) + \int_0^t z_h( s ) \dd s
      \le \frac{ e^{(1+h) c t} ( y_0 + \tilde{C} ) }{ \sqrt{ 1 - h ( y_0 + \tilde{C} )^2 ( e^{2(1+h)ct} - 1 )  }}.
    \end{aligned}
  \end{equation}
\end{lemma}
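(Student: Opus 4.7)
The plan is to reduce the cubic differential inequality to a Bernoulli-type ODE in the auxiliary quantity $f(t):=y_h(t)+\tilde C$, solve that ODE in closed form, and then invoke a comparison argument to transfer the resulting bound back to $y_h$ and to $\int_0^t z_h$.

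First, I would set $f(t):=y_h(t)+\tilde C\ge 0$, so that $f'=y_h'$, $f(0)=y_0+\tilde C$, and the hypothesis becomes
\begin{equation*}
  f'(t)+z_h(t)\le c\bigl(f(t)+hf(t)^2+h^2f(t)^3\bigr).
\end{equation*}
The next step is a purely algebraic manipulation to absorb the quadratic term into the linear and cubic ones: since $1+f^2-f=(f-\tfrac12)^2+\tfrac34>0$, we have $f+f^3-f^2\ge 0$, hence
\begin{equation*}
  f+hf^2+h^2f^3\le (1+h)f+h(1+h)f^3=(1+h)f(1+hf^2),
\end{equation*}
and therefore
\begin{equation*}
  f'(t)+z_h(t)\le (1+h)c\,f(t)\bigl(1+hf(t)^2\bigr)=:\phi(f(t)).
\end{equation*}

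Next, I would introduce the companion ODE $F'=\phi(F)$, $F(0)=y_0+\tilde C$. This is separable: writing $F'=(1+h)c(F+hF^3)$ and substituting $G=F^2$ yields the linear equation $G'=2(1+h)c\,G+2(1+h)c\,h\,G^2$, or better, $(1/G)'=-2(1+h)c/G-2(1+h)c\,h$, so that $1/F^2$ satisfies a linear first-order ODE. Solving and reverting gives
\begin{equation*}
  F(t)=\frac{(y_0+\tilde C)\,e^{(1+h)ct}}{\sqrt{1-h(y_0+\tilde C)^2\bigl(e^{2(1+h)ct}-1\bigr)}},
\end{equation*}
which is well-defined precisely under the smallness assumption on $h$ stated in the lemma. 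A direct verification (differentiating $F^2=\alpha^2 e^{2(1+h)ct}/D(t)$ with $D(t)=1-h\alpha^2(e^{2(1+h)ct}-1)$ and $\alpha=y_0+\tilde C$) confirms $F'=\phi(F)$.

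Finally, I would conclude by comparison. Since $\phi$ is nondecreasing on $[0,\infty)$, $f(0)=F(0)$, $f'\le \phi(f)$ and $F'=\phi(F)$, the standard ODE comparison principle gives $f(t)\le F(t)$ on the interval where $F$ is defined. For the integral term, using monotonicity of $\phi$ one more time,
\begin{equation*}
  \int_0^t z_h(s)\,ds\le \int_0^t\bigl(\phi(f(s))-f'(s)\bigr)\,ds\le \int_0^t F'(s)\,ds-\bigl(f(t)-f(0)\bigr)=F(t)-f(t),
\end{equation*}
so $f(t)+\int_0^t z_h(s)\,ds\le F(t)$, and dropping the nonnegative $\tilde C$ on the left yields exactly \eqref{eq:7}.

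The only mildly delicate step is the algebraic reduction from the cubic polynomial with $y_h,y_h^2,y_h^3$-terms to the clean Bernoulli form $(1+h)cf(1+hf^2)$; every other step is either an explicit integration or a standard comparison argument.
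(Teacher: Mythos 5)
Your proof is correct, and the central algebraic step is identical to the paper's: both arguments absorb the quadratic term via $f^2\le f+f^3$ (valid since $1-f+f^2>0$ and $f\ge 0$) to reach the Bernoulli-type bound $(1+h)c\,f(1+hf^2)$, and both arrive at the same explicit expression under the same smallness condition on $h$. The differences are organizational. First, the paper folds the dissipation term into the auxiliary variable from the outset, setting $\eta_h(t)=y_h(t)+\int_0^t z_h(s)\dd s+\tilde C$, so that $\dt\eta_h=\dt y_h+z_h$ and the whole left-hand side of \eqref{eq:6} is controlled in one stroke; you instead work with $f=y_h+\tilde C$ alone and recover $\int_0^t z_h$ afterwards by integrating $z_h\le\phi(f)-f'$ and using $\phi(f)\le\phi(F)=F'$. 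Both are valid; the paper's choice avoids the extra telescoping step, while yours keeps the comparison function cleanly tied to $y_h$. Second, the paper integrates the differential inequality directly by observing that $\tfrac{1}{2+2h}\dt\log\big(\eta_h^2/(1+h\eta_h^2)\big)\le c$, whereas you solve the companion ODE $F'=\phi(F)$ in closed form and invoke the comparison principle (legitimate here since $\phi$ is a polynomial, hence locally Lipschitz, and $F$ exists on $[0,t]$ precisely under the stated condition on $h$). Your route requires citing one more standard theorem but sidesteps the implicit assumption $\eta_h>0$ needed to differentiate the logarithm. Either way the conclusion and the constant are the same.
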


\begin{proof}
  Let $\eta_h(t) = y_h(t) + \int_0^t z_h(s) \dd s + \tilde{C}$ and $\eta_0 = y_0 + \tilde{C}$. We note that $y_h(t)^q \le \eta_h(t)^q$ ($q =1,2,3$) and $\eta_h(t) \ge 0$. Then $\eta_h$ satisfies
  \begin{align*}
    \dt \eta_h(t) \le c \big( \eta_h(t) + h \eta_h(t)^2 + h^2 \eta_h(t)^3 \big)
    \le c \big( (1+h) \eta_h(t) + (h+h^2) \eta_h(t)^3 \big).
  \end{align*}
  This implies
  \begin{equation*}
    \frac{1}{2+2h} \dt \log \left( \frac{ \eta_h(t)^2 }{ 1 + h \eta_h(t)^2 } \right) = \frac{ \dt \eta_h(t) }{ (1+h) \eta_h(t) + (h+h^2) \eta_h(t)^3 }
    \le c.
  \end{equation*}
  Assuming that $h$ is sufficiently small so that $1 - h \eta_0^2 ( e^{2(1+h)ct} - 1 ) > 0$, integrating this inequality in time implies
  \begin{equation*}
    \eta_h(t)^2 \le \frac{ e^{2 (1+h) c t} \eta_0^2 }{ 1 - h \eta_0^2 \, ( e^{2(1+h)ct} - 1 ) }.
  \end{equation*}
  Rearranging this inequality gives the desired result. \qed
\end{proof}

We can now show the stability result in Theorem~\ref{thm:ch-fem-wp}.

\begin{proof}[Proof of Theorem~\ref{thm:ch-fem-wp}]
  Considering \eqref{eq:ch-matrix-lumped}, since $\M(t)$ is positive definite, $\S(t)$ positive semi-definite and $\Psi$ is locally Lipschitz, standard theory of ordinary differential equations gives a unique short-time solution $\alpha \in C^1( [0,T_0]; \R^N)$ for some $T_0 < T$. From \eqref{eq:discrete-transport}, we know $\S(t)$ and $\M(t)$ are $C^1$ in time, and $\M(t)^{-1} \in C^1$ by the Inverse Function Theorem. Thus, we infer
  \begin{equation*}
    \beta(t) = \M(t)^{-1} \S(t) \M(t)^{-1} ( \eps \S(t) \alpha(t) + \frac1\eps \Psi(\alpha(t)) ) \in C^1([0,T_0]; \R^N).
  \end{equation*}
  This is easily translated into solutions $U_h, W_h$ in the appropriate spaces.

  Since $\phi_h = 1$ is an admissible test function in \eqref{eq:ch-fem-w}, it is clear that $\int_{\Gamma_h(t)} U_h \dd \sigma_h$ is conserved. 
  
  To extend to the long-term solution, we construct an energy bound. We start by testing \eqref{eq:ch-fem-w} with $W_h$ and \eqref{eq:ch-fem-u} with $\md_h U_h$ and sum to see
  \begin{align*}
    & \eps a_h ( U_h, \md_h U_h ) + \frac1\eps m_h( \psi'(U_h), \md_h U_h) + a_h( W_h,  W_h ) \\
    & \quad = - \dt m_h( U_h, W_h ) + m_h( U_h, \md_h W_h ) + m_h( \md_h U_h, W_h ).
  \end{align*}
  Applying the transport formulae from \myref[Lemma]{lem:discrete-transport}, we obtain
  \begin{align*}
    & \dt \left( \eps a_h( U_h, U_h ) + \frac1\eps m_h( \psi(U_h), 1 ) \right) + a_h( W_h, W_h ) \\
    & \quad = \frac\eps2 b_h( V_h; U_h, U_h ) + \frac1\eps g_h( V_h; \psi(U_h), 1 ) - g_h( V_h; U_h, W_h ).
  \end{align*}
  
  Next, we introduce the $L^2(\Gamma_h(t))$ projection $\Lambda_h \colon L^2(\Gamma_h(t)) \to S_h(t)$. For $z \in L^2(\Gamma_h(t))$, we define $\Lambda_h z$ as the unique solution of
  \begin{equation}
    \label{eq:4}
    m_h( \Lambda_h z, \phi_h ) = m_h( z, \phi_h ) \quad \mbox{ for all } \phi_h \in S_h(t).
  \end{equation}
  For $z \in H^1(\Gamma_h(t))$, we will make use of the following bounds:
  \begin{equation}
    \label{eq:5}
    \norm{ \Lambda_h z }_{H^1(\Gamma_h(t))} \le c \norm{ z }_{H^1(\Gamma_h(t))}, \qquad
    \norm{ z - \Lambda_h z }_{L^2(\Gamma_h(t))} \le c h \norm{ z }_{H^1(\Gamma_h(t))}.
  \end{equation}
  These bounds follow since our triangulation is quasi-uniform.

  We first note that from our assumptions on $v$, we have $U_h (\nabla_{\Gamma} \cdot v)^{-\ell} \in H^1(\Gamma_h(t))$ with $\norm{ U_h (\nabla_{\Gamma} \cdot v)^{-\ell} }_{H^1(\Gamma_h(t))} \le c \norm{ U_h }_{H^1(\Gamma_h(t))}$. Next, we test \eqref{eq:ch-fem-u} with $\Lambda_h( U_h (\nabla_{\Gamma} \cdot v)^{-\ell} )$ and using \eqref{eq:5} and the Sobolev embedding (Lemma~\ref{lem:sobolev-embedh}), we see that
  \begin{align*}
    & \abs{ \int_{\Gamma_h(t)} W_h \Lambda_h \big( U_h  (\nabla_\Gamma \cdot v)^{-\ell} \big) \dd \sigma_h } \\
    & \le \eps \abs{ a_h ( U_h, \Lambda_h \big( U_h  (\nabla_\Gamma \cdot v)^{-\ell} \big) ) } + \frac{1}{\eps} \abs{ m_h ( \psi'(U_h), U_h (\nabla_\Gamma \cdot v )^\ell ) } \\
    & \qquad + \frac{1}{\eps} \abs{ m_h ( \psi'(U_h), \Lambda_h \big( U_h  (\nabla_\Gamma \cdot v)^{-\ell} \big) - U_h (\nabla_\Gamma \cdot v )^\ell ) } \\
    & \le c \left( \eps a_h( U_h, U_h ) + \frac{1}{\eps} m_h( \psi(U_h), 1 ) \right) + \frac{c h}{\eps} \norm{ U_h }_{H^1(\Gamma_h(t))}^4.
  \end{align*}
  Similarly, testing \eqref{eq:ch-fem-u} with $W_h$ leads to
  \begin{align*}
    m_h( W_h, W_h ) \le c \eps a_h( U_h, U_h ) + \frac12 a_h( W_h, W_h ) + \frac{c}{\eps^2} \norm{ U_h }_{H^1(\Gamma_h(t))}^6.
  \end{align*}

  Applying the geometric bound \eqref{eq:92}, the two previous bounds, a Poincar\'e inequality and the fact that the mass of $U_h$ is conserved, we infer that
  \begin{align*}
    & \abs{ g( V_h; U_h, W_h ) } \\
    & \quad \le \abs{ m_h( W_h, U_h \big( \nabla_{\Gamma_h} \cdot V_h - (\nabla_\Gamma \cdot v)^{-\ell} \big) ) }
    + \abs{ m_h \big( W_h, \Lambda_h( U_h ( \nabla_\Gamma \cdot v )^{-\ell} ) \big) } \\
    & \quad \le c m_h( U_h, U_h) + c h^2 m_h( W_h, W_h )
    + \abs{ m_h \big( W_h, \Lambda_h( U_h ( \nabla_\Gamma \cdot v )^{-\ell} ) \big) } \\
    & \quad \le c \left( \eps a_h( U_h, U_h ) + \frac{1}{\eps} m_h( \psi(U_h), 1 ) \right) + \frac{1}{2} a_h( W_h, W_h ) \\
    & \quad \qquad + c_\eps \left( h\, a_h( U_h, U_h )^2 + h^2\, a_h( U_h, U_h )^3 \right) + \tilde{C}_0(U_0),
  \end{align*}
  where $\tilde{C}_0(U_0)$ is a constant which only depends on the integral of $U_0$ on $\Gamma_{h,0}$. This leads to the estimate
  \begin{align*}
    & \dt \left( \eps a_h( U_h, U_h ) + \frac{1}{\eps} m_h( \psi(U_h), 1 ) \right) + a_h( W_h, W_h ) \\
    & \le c_\eps \left( \eps a_h( U_h, U_h ) + \frac{1}{\eps} m_h( \psi(U_h), 1 ) +  h\, a_h( U_h, U_h )^2 + h^2\, a_h( U_h, U_h )^3 \right) \\
    & \qquad + \tilde{C}_0(U_0).
  \end{align*}

  We will use the generalised Gronwall inequality from \eqref{eq:7} with $y_h = \eps a_h( U_h, U_h ) + \frac{1}{\eps} m_h( \psi(U_h), 1 )$, $z_h = a_h( W_h, W_h )$ and $\tilde{C} = \tilde{C}_0(U_0)$. Given $T$, there exists $h_1$, $0 < h_1 < h_0$, such that for $h < h_1$, we have
  \begin{equation*}
    1 - h (\E_0^h + \tilde{C}_0(U_0))^2 ( e^{ 2 ( 1 + h ) ct } - 1 ) > 0.
  \end{equation*}
  This gives the energy bound in \eqref{eq:72} with $C_0$ given by
  \begin{equation*}
    C_0 := \frac{ e^{(1+h)cT} (\E_0^h + \tilde{C}_0(U_0)) }{ \sqrt{ 1 - h (\E_0^h + \tilde{C}_0(U_0))^2 ( e^{ 2 ( 1 + h ) ct } - 1 ) }}
  \end{equation*}
  This implies, that if $h < h_1$, we have an energy bound on $(0,T)$ and hence can turn the short-time existence result in to existence over $(0,T)$ where $T$ is arbitrary. \qed
\end{proof}

\subsection{Geometric estimates}
\label{sec:geometric-estimates}

In this section, we will simply state the following geometric estimates without proof. Details can be found in \cite[Section~5]{DziEll13} except for \eqref{eq:ch-g-bound} and \eqref{eq:ch-b-bound} which can be found in \cite[Lemma~3.3.14]{Ran13}.

\begin{lemma}
  Let $\mu_h$ denote the quotient of surface measures $\mathrm{d} \sigma$ on $\Gamma(t)$ and $\mathrm{d} \sigma_h$ on $\Gamma_h(t)$ such that $\mu_h \dd \sigma_h = \mathrm{d} \sigma$; then
  \begin{subequations}
    \begin{align}
      \label{eq:87}
      \sup_{t \in [0,T]} \sup_{\Gamma_h(t)} \abs{ 1 - \mu_h } & \le c h^2 \\
      \label{eq:ch-mdh-muh-bound}
      \sup_{ t \in [0,T] } \norm{ \md_h \mu_h }_{L^\infty(\Gamma_h(t))} & \le c h^2.
    \end{align}
  \end{subequations}
\end{lemma}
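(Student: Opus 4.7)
The plan is to work locally on each flat element $E(t) \in \T_h(t)$ and its curved lift $e(t) = E^\ell(t)$, derive an explicit formula for $\mu_h$ in terms of $d$, $\nu$, $\nu_h$ and $\H$, and then estimate each ingredient using the smoothness assumptions on $d$ and $v$ together with the fact that $V_h$ interpolates $v$ at the nodes. Since $p(x,t) = x - d(x,t)\nu(x,t)$, differentiating along $T_x\Gamma_h(t)$ yields the standard Jacobian formula (see \mycite{Lemma~5.5}{DziEll13})
\[
\mu_h(x,t) = \frac{1}{\nu(x,t) \cdot \nu_h(x,t)} \, \det\bigl( P_h (P - d\,\H) P_h \bigr)\big|_{T_x\Gamma_h(t)},
\]
whose right-hand side is a polynomial in $d$ with coefficients controlled by $\|\H\|_{L^\infty(\N_T)}$, divided by $\nu \cdot \nu_h$.

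For \eqref{eq:87}, I use that every node $X_j(t)$ of $\Gamma_h(t)$ lies on $\Gamma(t)$, so $d(X_j(t),t) = 0$; since $d \in C^2(\N_T)$ and the elements are flat, piecewise linear interpolation gives $\|d\|_{L^\infty(\Gamma_h(t))} \le c h^2$. The identity $1 - \nu \cdot \nu_h = \tfrac12 |\nu - \nu_h|^2$ together with the element-wise bound $\|\nu - \nu_h\|_{L^\infty(E(t))} \le c h$ then yields $|1 - 1/(\nu \cdot \nu_h)| \le c h^2$. Substituting both into the formula for $\mu_h$ and collecting the $O(h^2)$ contributions proves the first estimate.

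For \eqref{eq:ch-mdh-muh-bound}, I apply $\md_h$ to the formula for $\mu_h$ and use the chain rule, reducing the task to $L^\infty$ bounds on $\md_h d$, $\md_h \nu$, $\md_h \nu_h$ and $\md_h \H$. The crucial cancellation is for $\md_h d$: differentiating the identity $d(X_j(t),t) = 0$ along the nodal trajectory $\dot{X}_j = v(X_j,\cdot)$ yields $d_t(X_j,t) + v(X_j,t) \cdot \nu(X_j,t) = 0$, so
\[
\md_h d(X_j,t) = d_t(X_j,t) + V_h(X_j,t) \cdot \nu(X_j,t) = (V_h - v)(X_j,t) \cdot \nu = 0
\]
because $V_h$ interpolates $v$ at the nodes; combined with the $C^2$ regularity of $d$ and $v$, piecewise linear interpolation promotes this nodal vanishing to $\|\md_h d\|_{L^\infty(\Gamma_h(t))} \le c h^2$. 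Analogous arguments, using $v, \nu, \H \in C^2(\N_T)$, give $O(1)$ bounds on $\md_h \nu$, $\md_h \nu_h$, $\md_h \H$, and in the differentiated expression these enter only through products with factors that are already $O(h^2)$ (either $d$ itself, the combination $1 - 1/(\nu\cdot\nu_h)$, or $\md_h d$). Collecting terms yields the second estimate.

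The main obstacle is the bookkeeping in the second step: one has to track carefully which factors are $O(1)$, $O(h)$ and $O(h^2)$, and rely on the interpolation cancellation $V_h(X_j(t),t) = v(X_j(t),t)$ at the nodes to recover the full $O(h^2)$ rate rather than only $O(h)$. A cleaner alternative, adopted in \mycite{Lemma~3.3.14}{Ran13}, is to apply the two transport identities \eqref{eq:transport} and \eqref{eq:discrete-transport} to the constant function $\eta \equiv 1$ on each lifted element $e(t)$, subtract, and use $\mu_h \dd \sigma_h = \dd\sigma$ to convert the difference into an ODE for $\mu_h$ driven by $\nabla_\Gamma \cdot v_h - (\nabla_{\Gamma_h} \cdot V_h)^\ell$; an $L^\infty$ interpolation bound on this driving term, together with $\mu_h(\cdot,0) = 1 + O(h^2)$, then produces both estimates simultaneously.
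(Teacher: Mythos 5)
The paper does not actually prove this lemma --- it states it and refers to \cite{DziEll13} --- so your proposal is being judged against the standard literature argument rather than an in-paper proof. Your treatment of \eqref{eq:87} is correct and standard: $d$ vanishes at the nodes and is $C^2$, so $\abs{d}\le ch^2$ on each flat element; $1-\nu\cdot\nu_h=\tfrac12\abs{\nu-\nu_h}^2\le ch^2$; substituting into the Jacobian formula gives the claim. Your key cancellation for the second bound is also sound: since $d_t+v\cdot\nabla d\equiv 0$ in $\N_T$ under the paper's conventions, $\md_h d=(V_h-v)\cdot\nu$, which is $O(h^2)$ because $V_h$ is the nodal interpolant of $v$ on each flat element.

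However, the bookkeeping for \eqref{eq:ch-mdh-muh-bound} has a genuine gap. It is not true that $\md_h\nu$ and $\md_h\nu_h$ enter only multiplied by factors that are already $O(h^2)$: differentiating the Jacobian formula produces the term $\md_h(\nu\cdot\nu_h)$ multiplied by the determinant factor, which is $1+O(h^2)$, not $O(h^2)$. Writing $\nu\cdot\nu_h=1-\tfrac12\abs{\nu-\nu_h}^2$ gives $\md_h(\nu\cdot\nu_h)=-(\nu-\nu_h)\cdot\md_h(\nu-\nu_h)$, and with only the $O(1)$ bounds you invoke for $\md_h\nu$ and $\md_h\nu_h$ this term is merely $O(h)$, so your argument as written yields $\abs{\md_h\mu_h}\le ch$ rather than $ch^2$. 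To close it you need the additional estimate $\abs{\md_h(\nu-\nu_h)}\le ch$ (equivalently $\abs{\md_h(\nu\cdot\nu_h)}\le ch^2$), which expresses a cancellation between the evolution of the smooth and discrete normals; this does not follow from the uniform-in-time $O(h)$ smallness of $\nu-\nu_h$ (a quantity can be uniformly $O(h)$ while its time derivative is $O(1)$) and is precisely the delicate part of the proof. Your ``cleaner alternative'' via \eqref{eq:transport} and \eqref{eq:discrete-transport}, which yields the pointwise identity $\md_h\mu_h=\mu_h\big((\nabla_\Gamma\cdot v_h)^{-\ell}-\nabla_{\Gamma_h}\cdot V_h\big)$, is essentially the argument behind the paper's citation; but there the whole burden is relocated to the estimate $\norm{(\nabla_\Gamma\cdot v_h)^{-\ell}-\nabla_{\Gamma_h}\cdot V_h}_{L^\infty(\Gamma_h(t))}\le ch^2$, which is not ``an $L^\infty$ interpolation bound'': note that \eqref{eq:92} only gives $\abs{\nabla_\Gamma(v-v_h)}\le ch$, so a full order must again be recovered from a geometric cancellation between tangential divergences taken on two different surfaces. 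Either route is viable, but in both the $O(h^2)$ rate for the time-derivative bound rests on an estimate you have asserted rather than proved.
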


\begin{lemma}
  \label{lem:ch-geometric-bounds}

  Let $Z_h, \phi_h \in S_h(t)$ with lifts $z_h, \varphi_h \in S_h^\ell(t)$. Then the following estimates hold for the given bilinear forms:
  \begin{subequations}
    \begin{align}
      \label{eq:ch-m-bound}
      \abs{ m_h( Z_h, \phi_h ) - m( z_h, \varphi_h ) }
      & \le c h^2 \norm{ Z_h }_{L^2(\Gamma_h(t))} \norm{ \phi_h }_{L^2(\Gamma_h(t))} \\
      \label{eq:ch-a-bound}
      \abs{ a_h( Z_h, \phi_h ) - a( z_h, \varphi_h ) }
      & \le c h^2 \norm{ \nabla_{\Gamma_h} Z_h }_{L^2(\Gamma_h(t))} \norm{ \nabla_{\Gamma_h} \phi_h }_{L^2(\Gamma_h(t))} \\
      \label{eq:ch-g-bound}
      \abs{ g_h( V_h; Z_h, \phi_h ) - g( v_h; z_h, \varphi_h ) }
      & \le c h^2 \norm{ Z_h }_{L^2(\Gamma_h(t))} \norm{ \phi_h }_{L^2(\Gamma_h(t))} \\
      \label{eq:ch-b-bound}
      \abs{ b_h( V_h; Z_h, \phi_h ) - b( v_h; z_h, \varphi_h ) }
      & \le c h^2 \norm{ \nabla_{\Gamma_h} Z_h }_{L^2(\Gamma_h(t))} \norm{ \nabla_{\Gamma_h} \phi_h }_{L^2(\Gamma_h(t))}.
    \end{align}
  \end{subequations}
\end{lemma}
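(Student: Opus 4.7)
The plan is to transport every term to a common reference (namely $\Gamma_h(t)$) via the closest-point lift, and then exploit the uniform $O(h^2)$ smallness of the geometric quantities that separate the smooth and the polyhedral surfaces. For each of the four bilinear forms the difference decomposes into three ingredients: the measure ratio $(1-\mu_h)$ between $\dd\sigma_h$ and $\dd\sigma$; the mismatch between the tangential gradients $\nabla_\Gamma$ and $\nabla_{\Gamma_h}$; and the discrepancy between $V_h$ and $v_h$ (and of their divergences and rate-of-deformation tensors). Under the $C^3$-regularity of $d$ and $v\in C^2(\N_T)$, all three are uniformly $O(h^2)$ on $\Gamma_h(t)$, because $|d|,|d_t| \le c h^2$ there.

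For \eqref{eq:ch-m-bound} I would use $\dd\sigma = \mu_h \dd\sigma_h$ and the definition of the lift to get
\begin{equation*}
m_h(Z_h,\phi_h) - m(z_h,\varphi_h) = \int_{\Gamma_h(t)} Z_h \phi_h (1-\mu_h) \dd\sigma_h,
\end{equation*}
and then \eqref{eq:87} combined with Cauchy--Schwarz gives the result immediately. For \eqref{eq:ch-a-bound} the standard Dziuk--Elliott relation between the two tangential gradients on a lifted element supplies a geometric tensor $A_h = A_h(d,\H,P,P_h)$ with $\norm{P_h - A_h}_{L^\infty(\Gamma_h(t))} \le c h^2$ such that
\begin{equation*}
a_h(Z_h,\phi_h) - a(z_h,\varphi_h) = \int_{\Gamma_h(t)} (P_h - A_h) \nabla_{\Gamma_h} Z_h \cdot \nabla_{\Gamma_h}\phi_h \dd\sigma_h,
\end{equation*}
after which Cauchy--Schwarz closes the estimate.

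For \eqref{eq:ch-g-bound} and \eqref{eq:ch-b-bound} the new ingredients are perturbation bounds relating the piecewise polynomial velocity $V_h$ on $\Gamma_h(t)$ to $v_h$ on $\Gamma(t)$. The key auxiliary estimate I would prove first is that, element-wise on $\T_h(t)$,
\begin{equation*}
\bigl| \nabla_{\Gamma_h}\cdot V_h - (\nabla_\Gamma\cdot v_h)^{-\ell} \bigr| + \bigl| \B_h(V_h) - \B(v_h)^{-\ell} \bigr| \le c h^2.
\end{equation*}
This is obtained by differentiating the identity $v_h\circ p = (P - d\H) V_h - d_t \nu - d\,\nu_t$ from \eqref{eq:75} element-wise and combining $|d|\le ch^2$ on $\Gamma_h(t)$ with the $C^2$-smoothness of $p$ and $v$. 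Splitting
\begin{equation*}
g_h(V_h;Z_h,\phi_h) - g(v_h;z_h,\varphi_h) = \int_{\Gamma_h(t)} Z_h\phi_h \bigl[ \nabla_{\Gamma_h}\cdot V_h - \mu_h (\nabla_\Gamma\cdot v_h)^{-\ell} \bigr] \dd\sigma_h
\end{equation*}
and inserting the pointwise bound together with \eqref{eq:87} yields \eqref{eq:ch-g-bound}; \eqref{eq:ch-b-bound} follows from the same scheme applied to $\B$, with the gradient factors absorbed via \eqref{eq:79b}.

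The main obstacle is the perturbation estimate for $\nabla_{\Gamma_h}\cdot V_h$, because $V_h$ is only piecewise affine (so its discrete divergence is element-wise constant) while $v_h$ inherits the regularity of $v$ through the closest-point parametrisation. The cleanest route is to exploit the fact that $V_h$ is the nodal Lagrange interpolant of $v$ at the vertices of $\T_h(t)$: standard element-wise interpolation error bounds, combined with $|d|\le ch^2$ and $C^2$-bounds on $p$, produce the required $O(h^2)$ control on the difference of the divergences without having to differentiate $V_h$ twice.
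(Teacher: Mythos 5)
First, note that the paper does not actually prove this lemma: it is stated without proof, with \eqref{eq:ch-m-bound} and \eqref{eq:ch-a-bound} deferred to \mycite{Section~5}{DziEll13} and \eqref{eq:ch-g-bound}, \eqref{eq:ch-b-bound} to \mycite{Lemma~3.3.14}{Ran13}. Your treatment of the first two estimates is correct and is exactly the standard argument of those references: the identity $m_h(Z_h,\phi_h)-m(z_h,\varphi_h)=\int_{\Gamma_h(t)}Z_h\phi_h(1-\mu_h)\dd\sigma_h$ together with \eqref{eq:87} gives \eqref{eq:ch-m-bound}, and the Dziuk--Elliott tensor $A_h$ with $\norm{P_h-A_h}_{L^\infty}\le ch^2$ gives \eqref{eq:ch-a-bound}. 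You have also correctly identified the auxiliary estimate that drives \eqref{eq:ch-g-bound} and \eqref{eq:ch-b-bound}.

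The gap is in how you propose to prove that auxiliary estimate. Your fallback route --- ``$V_h$ is the nodal Lagrange interpolant of $v$, so standard interpolation error bounds give $O(h^2)$ control on the difference of the divergences'' --- cannot work: for a piecewise affine interpolant of a $C^2$ function the interpolation error in any \emph{first} derivative is only $O(h)$, and indeed the paper's own estimate \eqref{eq:92} records that $\abs{\nabla_\Gamma(v-v_h)}$ is genuinely only $O(h)$. Any comparison of $\nabla_{\Gamma_h}\cdot V_h$ with $(\nabla_\Gamma\cdot v_h)^{-\ell}$ that is routed through the smooth velocity $v$ therefore loses a power of $h$ and cannot close \eqref{eq:ch-g-bound}. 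The $O(h^2)$ agreement of the two divergences is a purely geometric cancellation between the two \emph{discrete} velocities, and the clean way to capture it is the transport identity for the measure ratio: applying \eqref{eq:discrete-transport} on $E(t)$ and \eqref{eq:82} on $e(t)=E^\ell(t)$ to the area element yields the pointwise relation $\mu_h(\nabla_\Gamma\cdot v_h)^{-\ell}=\md_h\mu_h+\mu_h\nabla_{\Gamma_h}\cdot V_h$, so that $\nabla_{\Gamma_h}\cdot V_h-\mu_h(\nabla_\Gamma\cdot v_h)^{-\ell}=(1-\mu_h)\nabla_{\Gamma_h}\cdot V_h-\md_h\mu_h$, which is $O(h^2)$ by \eqref{eq:87} and \eqref{eq:ch-mdh-muh-bound}; this is precisely why the paper records the bound \eqref{eq:ch-mdh-muh-bound} on $\md_h\mu_h$, which your proposal never uses. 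The estimate \eqref{eq:ch-b-bound} is obtained analogously by differentiating in time the identity underlying \eqref{eq:ch-a-bound} (equivalently, comparing the two transport formulae for the Dirichlet forms $a_h$ and $a$), rather than by a pointwise comparison of $\B_h(V_h)$ with $\B(v)$; your first suggestion of differentiating \eqref{eq:75} term by term also runs into individually $O(h)$ contributions (e.g.\ $\nabla_{\Gamma_h}d=P_h\nu=O(h)$) whose cancellation is the entire content of the lemma and is not exhibited in your sketch.
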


Using the same reasoning, it is also clear that
\begin{equation}
  \label{eq:ch-mpsi-bound}
  \abs{ m_h( \psi'(Z_h), \phi_h) - m( \psi'(z_h), \varphi_h) } 
  \le c h^2 \norm{ \psi'(Z_h) }_{L^2(\Gamma_h(t))} \norm{ \phi_h }_{L^2(\Gamma_h(t))}.
\end{equation}

Similar results apply if the first argument is the material derivative of a finite element function:

\begin{lemma}
  For $Z_h\in S_h^T, \phi_h \in \tilde{S}_h^T$ with lifts $z_h, \varphi_h \in S_h^\ell(t)$ for each time, we have
  \begin{subequations}
    \begin{align}
      \label{eq:mmd-bound}
      \abs{ m_h( \md_h Z_h, \varphi_h ) - m( \md_h z_h, \phi_h ) }
      & \le c h^2 \norm{ \md_h Z_h }_{L^2(\Gamma_h(t))} \norm{ \phi_h }_{L^2(\Gamma_h(t))} \\
      \label{eq:amd-bound}
      \abs{ a_h( \md_h Z_h, \varphi_h ) - a( \md_h z_h, \phi_h ) }
      & \le c h^2 \norm{ \nabla_{\Gamma_h} (\md_h Z_h) }_{L^2(\Gamma_h(t))} 
      \norm{ \nabla_{\Gamma_h} \phi_h }_{L^2(\Gamma_h(t))}.
    \end{align}
  \end{subequations}
\end{lemma}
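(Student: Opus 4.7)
\smallskip

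\noindent\textbf{Proof plan.} The main observation is that, for $Z_h \in S_h^T$, the discrete material derivative $\md_h Z_h$ is itself a piecewise linear finite element function on $\Gamma_h(t)$. Indeed, expanding $Z_h(\cdot,t) = \sum_{j=1}^N \alpha_j(t) \phi_j^N(\cdot,t)$ and invoking the transport of basis property $\md_h \phi_j^N = 0$ from Lemma~\ref{lem:transport-basis}, we get $\md_h Z_h = \sum_{j=1}^N \dot\alpha_j(t)\, \phi_j^N \in S_h(t)$. Moreover, by \eqref{eq:81}, the lift of $\md_h Z_h$ is precisely $\md_h z_h \in S_h^\ell(t)$.

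With this identification, the two estimates reduce to direct applications of Lemma~\ref{lem:ch-geometric-bounds} with the finite element function $\md_h Z_h$ replacing the generic $Z_h$ appearing there. Concretely, for \eqref{eq:mmd-bound} one applies \eqref{eq:ch-m-bound} to the pair $(\md_h Z_h, \phi_h)$ whose lifts are $(\md_h z_h, \varphi_h)$, obtaining
\begin{equation*}
  \abs{ m_h( \md_h Z_h, \phi_h ) - m( \md_h z_h, \varphi_h ) }
  \le c h^2 \norm{ \md_h Z_h }_{L^2(\Gamma_h(t))} \norm{ \phi_h }_{L^2(\Gamma_h(t))}.
\end{equation*}
For \eqref{eq:amd-bound}, one applies \eqref{eq:ch-a-bound} in the same way, yielding
\begin{equation*}
  \abs{ a_h( \md_h Z_h, \phi_h ) - a( \md_h z_h, \varphi_h ) }
  \le c h^2 \norm{ \nabla_{\Gamma_h} (\md_h Z_h) }_{L^2(\Gamma_h(t))} \norm{ \nabla_{\Gamma_h} \phi_h }_{L^2(\Gamma_h(t))}.
\end{equation*}

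There is essentially no obstacle here beyond the conceptual one: the reader must accept that $\md_h Z_h$ lies in $S_h(t)$ at each time, so that the earlier geometric lemma applies verbatim. The stated bound in \eqref{eq:amd-bound} is written in terms of $\nabla_{\Gamma_h}(\md_h Z_h)$ rather than $\nabla_\Gamma(\md_h z_h)$, which matches the output of \eqref{eq:ch-a-bound} directly (the two are equivalent up to $O(1)$ constants by \eqref{eq:79b}, should one prefer the lifted norm). Thus both assertions follow as immediate corollaries of Lemma~\ref{lem:ch-geometric-bounds}, without any further geometric perturbation argument.
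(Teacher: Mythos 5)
Your proposal is correct and matches the paper's (implicit) reasoning: the paper states this lemma without proof, remarking only that ``similar results apply'' to those of Lemma~\ref{lem:ch-geometric-bounds}, and your observation that $\md_h Z_h = \sum_j \dot\alpha_j(t)\,\phi_j^N \in S_h(t)$ (by Lemma~\ref{lem:transport-basis}) with lift $(\md_h Z_h)^\ell = \md_h z_h$ (by \eqref{eq:81}) is exactly the justification needed to apply \eqref{eq:ch-m-bound} and \eqref{eq:ch-a-bound} verbatim. You also correctly (if silently) repaired the swapped roles of $\phi_h$ and $\varphi_h$ in the statement's left-hand sides.
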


The next lemma bounds errors from the approximation of $v$ by $v_h$:

\begin{lemma}
  The difference between the continuous velocity $v$ and the discrete velocity $v_h$ on $\Gamma(t)$ can be estimated by
  \begin{equation}
    \label{eq:92}
    \abs{ v - v_h } + h \abs{ \nabla_\Gamma ( v - v_h ) } \le c h^2 \norm{ v }_{C^2(\N_T)} < c h^2.
  \end{equation}
\end{lemma}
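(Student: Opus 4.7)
My plan is to work from the explicit formula for $v_h$ stated immediately after \eqref{eq:75}. Fix $t$ and let $p = p(x,t) \in \Gamma(t)$ for some $x \in \Gamma_h(t)$. Then
\begin{equation*}
  v_h(p,t) = \bigl( P(x,t) - d(x,t) \H(x,t) \bigr) V_h(x,t) - d_t(x,t) \nu(x) - d(x,t) \nu_t(x,t),
\end{equation*}
while on the continuous side the decomposition $v = v_\tau + v_\nu$ with $v_\nu = -d_t \nu$ and $v_\tau = P v$, together with the fact that $\nu$ is constant along normal rays, gives
\begin{equation*}
  v(p,t) = P(x,t) v(p,t) - d_t(p,t) \nu(x).
\end{equation*}

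Subtracting yields three classes of terms to estimate. First, $P(x,t)\bigl( v(p,t) - V_h(x,t) \bigr)$: by Lipschitz continuity of $v$ we can replace $v(p,t)$ by $v(x,t)$ at the cost of $c|x - p(x,t)|\|v\|_{C^1} \le c h^2 \|v\|_{C^2(\N_T)}$, using that $|d(x,t)| \le c h^2$ on $\Gamma_h(t)$ (since the polyhedral surface interpolates $\Gamma(t)$ at its nodes). Standard piecewise-linear nodal interpolation theory applied element-wise on $\Gamma_h(t)$ then gives $|v(x,t) - V_h(x,t)| \le c h^2 \|v\|_{C^2}$ and $|\nabla_{\Gamma_h}(v - V_h)(x,t)| \le c h \|v\|_{C^2}$. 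Second, $\bigl( d_t(x,t) - d_t(p,t) \bigr) \nu(x)$ is bounded by $c|x - p(x,t)| \|d_t\|_{C^1} \le c h^2 \|v\|_{C^2(\N_T)}$, since $d_t = -v \cdot \nu$. Third, the remaining terms $d(x,t) \H(x,t) V_h(x,t)$ and $d(x,t) \nu_t(x,t)$ are each bounded by $c h^2 \|v\|_{C^2(\N_T)}$ through the factor $|d(x,t)| \le c h^2$ and the $C^1$-boundedness of $\H, \nu_t, V_h$.

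For the tangential gradient bound, I would differentiate the same decomposition element-wise on the lifted triangulation $\T_h^\ell(t)$. The dominant contribution is $\nabla_\Gamma\bigl( P(v - V_h)\bigr)$ which, via the chain rule from $\Gamma_h$ to $\Gamma$ (\myref[Lemma]{lem:ch-norm-equiv}) and the interpolation estimate $|\nabla_{\Gamma_h}(v - V_h)| \le c h \|v\|_{C^2}$, contributes $O(h)$. Every other term carries either a factor $d = O(h^2)$ (whose tangential derivative is still $O(h)$ uniformly, since $|\nabla d| = 1$ but $d$ itself is still $O(h^2)$ and the product rule produces $\nabla d \cdot (\text{smooth})$ bounded by a constant times geometric quantities and pre-factored by $d$ in the other summand) or a derivative of a smooth geometric quantity times a Lipschitz-small remainder, all of which are $O(h)$ or better when multiplied by $h$.

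\textbf{Main obstacle.} The routine piece is the interpolation estimate for $V_h$ on flat triangles; the care-requiring piece is tracking the decomposition into normal and tangential parts and verifying, for each product, that either $|d| \le c h^2$ or the interpolation factor $c h$ appears, so that multiplication by $h$ in the gradient bound produces the claimed $c h^2$. Since $V_h$ is only piecewise smooth on $\Gamma_h(t)$ (and hence $v_h$ is piecewise smooth on $\T_h^\ell(t)$), the $\nabla_\Gamma$ bound must be interpreted element-wise, which is consistent with the way the estimate is used in \myref[Lemmas]{lem:ch-geometric-bounds}.
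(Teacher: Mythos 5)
The paper gives no proof of this lemma: it is quoted from \cite{DziEll13} (Section 5), and your strategy --- subtract the explicit formula for $v_h$ given after \eqref{eq:75} from the decomposition $v = Pv - d_t\nu$, then combine $|d|\le ch^2$ on $\Gamma_h(t)$ with the nodal interpolation error of $V_h$ --- is precisely the argument of that reference. The $L^\infty$ part of your sketch is correct and complete in outline.

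The gradient part, however, has a gap in its justification. The terms carrying a factor $d$ (for instance $d\,\H V_h$ and $d\,\nu_t$) contribute, after the product rule, a summand of the form $(\nabla_{\Gamma_h} d)\cdot(\text{smooth})$, and you bound this by ``a constant times geometric quantities'', i.e.\ $O(1)$, on the grounds that $|\nabla d|=1$. That is not enough: the claimed estimate requires $|\nabla_\Gamma(v-v_h)|\le ch$, so an $O(1)$ contribution from this summand would destroy the bound. What saves the argument is that the relevant derivative is tangential, and $\nabla_{\Gamma_h} d = P_h\nabla d = P_h\nu = (P_h - P)\nu$, which is $O(h)$ because $|\nu-\nu_h|\le ch$ --- the standard companion to $|d|\le ch^2$ for an interpolating polyhedral surface. (Alternatively: $d$ restricted to an element of diameter $h$ satisfies $|d|\le ch^2$ and has bounded second derivatives $\H$, so a Landau-type inequality on the element gives $|\nabla_E d|\le c(h^{-1}h^2 + h)\le ch$.) You assert the correct conclusion, that the tangential derivative of $d$ is $O(h)$, but the reason you offer does not produce it; the estimate $|\nu-\nu_h|\le ch$, equivalently $\|P-P_h\|\le ch$, must be invoked explicitly. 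The same ingredient is needed when differentiating the remainder $(d_t(x)-d_t(p))\nu$, whose tangential gradient again reduces to terms pre-multiplied by $P_h\nu$ or by $d$. With that supplied, the rest of the gradient bound goes through element-wise on $\T_h^\ell(t)$ as you describe.
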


This allows us to bound the error between the material derivatives on $\Gamma(t)$:

\begin{corollary}
  Suppose that $\eta\colon\G_T \to \R$ and $\md \eta$ and $\md_h \eta$ exist. For $\eta \in H^1(\Gamma(t))$, we have the estimate
  \begin{equation}
    \label{eq:93}
    \norm{ \md \eta - \md_h \eta }_{L^2(\Gamma(t))} 
    \le c h^2 \norm{ \nabla_\Gamma \eta }_{L^2(\Gamma(t))},
  \end{equation}
  and for $\eta \in H^2(\Gamma(t))$, we obtain
  \begin{equation}
    \label{eq:94}
    \norm{ \nabla_\Gamma ( \md \eta - \md_h \eta ) }_{L^2(\Gamma(t))}
    \le c h^2 \norm{ \eta }_{H^2(\Gamma(t))}.
  \end{equation}
\end{corollary}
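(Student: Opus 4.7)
The plan rests on the identity
\begin{equation*}
\md\eta - \md_h\eta = (v - v_h) \cdot \nabla_\Gamma \eta \quad \text{on } \Gamma(t),
\end{equation*}
after which both bounds reduce to direct application of the velocity estimate \eqref{eq:92}. To derive the identity, I would subtract $\md\eta = \partial_t\tilde\eta + v\cdot\nabla\tilde\eta$ from $\md_h\eta = \partial_t\tilde\eta + v_h\cdot\nabla\tilde\eta$ to obtain $(v-v_h)\cdot\nabla\tilde\eta$, and then argue that $v - v_h$ is purely tangential on $\Gamma(t)$: since the lifted material point $Y(t) = p(X(t),t)$ stays on $\Gamma(t)$, differentiating $d(Y(t),t)=0$ along the discrete flow yields $\nu\cdot v_h = -d_t$, matching $\nu\cdot v = -d_t$. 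The decomposition $\nabla\tilde\eta = \nabla_\Gamma\eta + \nu(\nu\cdot\nabla\tilde\eta)$ then kills the normal contribution, leaving an intrinsic formula independent of the extension.

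For \eqref{eq:93}, the result is immediate from H\"older's inequality and the pointwise bound $\|v-v_h\|_{L^\infty(\Gamma(t))} \le ch^2$ supplied by \eqref{eq:92}.

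For \eqref{eq:94}, I would differentiate the identity element-wise over the lifted triangulation $\T_h^\ell(t)$ (necessary because $v_h$ has gradient jumps across element edges), apply the product rule
\begin{equation*}
\nabla_\Gamma\bigl((v-v_h)\cdot\nabla_\Gamma\eta\bigr) = (\nabla_\Gamma(v-v_h))^T\nabla_\Gamma\eta + (\nabla_\Gamma^2\eta)(v-v_h),
\end{equation*}
and combine the two halves of \eqref{eq:92}: the $\mathcal{O}(h^2)$ $L^\infty$ bound on $v-v_h$ against $\|\nabla_\Gamma^2\eta\|_{L^2(\Gamma(t))}$ controls the Hessian term, while the $\mathcal{O}(h)$ $L^\infty$ bound on $\nabla_\Gamma(v-v_h)$ against $\|\nabla_\Gamma\eta\|_{L^2(\Gamma(t))}$ controls the first term.

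The main obstacle is the first-derivative term in \eqref{eq:94}: the naive pointwise bound $\|\nabla_\Gamma(v-v_h)\|_{L^\infty} \le ch$ yields only $\mathcal{O}(h)$, so reaching the stated $\mathcal{O}(h^2)$ requires extracting additional structure from the smoothness of $\nabla_\Gamma\eta$ -- for instance by an elementwise integration by parts on $\T_h^\ell(t)$ (with the resulting edge terms absorbed using a discrete trace inequality) that shifts a derivative off $v-v_h$, producing a superconvergence-type gain analogous to the Aubin--Nitsche improvement for Lagrange interpolants.
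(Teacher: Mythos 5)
Your identity $\md\eta - \md_h\eta = (v - v_h)\cdot\nabla_\Gamma\eta$ and its justification are correct: differentiating $d(p(X(t),t),t)=0$ along the discrete flow gives $\nu\cdot v_h = -d_t = \nu\cdot v$, so the velocity difference is purely tangential and the normal component of $\nabla\tilde\eta$ drops out. This is precisely the observation the paper records in the remark preceding the lifted transport formulae (``the continuous and discrete material velocities on $\{\Gamma(t)\}$ only differ in the tangential direction''); the paper itself gives no proof of the corollary, deferring to \mycite{Section~5}{DziEll13}. Your derivation of \eqref{eq:93} from the $L^\infty$ part of \eqref{eq:92} by H\"older is complete and is the intended argument.

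Your treatment of \eqref{eq:94}, however, has a genuine gap, and the repair you sketch cannot close it. As you note, the product rule leaves the term $(\nabla_\Gamma(v-v_h))^{T}\nabla_\Gamma\eta$, and \eqref{eq:92} only supplies $\abs{\nabla_\Gamma(v-v_h)}\le ch$. This is not an artefact of a crude norm choice: $V_h$ is the Lagrange interpolant of $v$ at the nodes, so the gradient of the interpolation error is genuinely of order $h$ elementwise for a generic $C^2$ velocity, and hence no norm of $\nabla_\Gamma(v-v_h)$ is $o(h)$; the term is irreducibly $O(h)\norm{\nabla_\Gamma\eta}_{L^2(\Gamma(t))}$. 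The proposed elementwise integration by parts with trace inequalities does not apply here because you are bounding the $L^2$ norm of a fixed function rather than a duality pairing against a test function --- there is nothing to shift a derivative onto --- and Aubin--Nitsche-type gains improve \emph{weaker} norms of an error, never the $L^2$ norm of its gradient. The direct argument therefore yields $\norm{\nabla_\Gamma(\md\eta-\md_h\eta)}_{L^2(\Gamma(t))}\le ch\norm{\eta}_{H^2(\Gamma(t))}$, one power of $h$ short of \eqref{eq:94} but consistent with the structure of \eqref{eq:92}, where the gradient of the velocity error likewise loses one power of $h$. Since \eqref{eq:94} is never invoked elsewhere in the paper (only \eqref{eq:93} is used, in Lemma~\ref{lem:md-Pih-bound}), before searching for a superconvergence mechanism you should verify the exponent against the cited source; if the correct statement carries $h$ rather than $h^2$, your argument, run to its natural $O(h)$ conclusion, is already the complete proof.
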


\subsection{Ritz projection}
\label{sec:ritz-projection}

We conclude this section by constructing a discrete projection operator, similar to an interpolation operator. We define the Ritz projection operator, $\Pi_h z \in S_h(t)$, of $z \in H^1(\Gamma(t))$ as the unique solution of
\begin{equation}
  \label{eq:102}
  a_h ( \Pi_h z, \phi_h ) = a ( z, \varphi_h )
  \quad \mbox{ for all } \phi_h \in S_h(t), \mbox{ with lift } \varphi_h \in S_h^\ell(t)
\end{equation} 
and
\begin{equation*}
  \int_{\Gamma_h(t)} \Pi_h z \dd \sigma_h = \int_{\Gamma(t)} z \dd \sigma.
\end{equation*}
We will write $\pi_h z = (\Pi_h z)^\ell$ for the lift of the Ritz projection.

\begin{remark}
  This operator is the Ritz projection used by \cite{DuJuTia09}, but different to that used in other surface finite element analyses such as \cite{DziEll07a,DziEll13}, which use the operator $\Rh \colon H^2(\Gamma(t)) \to S_h^\ell(t)$ given as the unique solution of
  \begin{equation*}
    a ( \Rh z, \varphi_h ) = a ( z, \varphi_h ) \quad \mbox{ for all } \varphi_h \in S_h^\ell(t) 
    \quad \mbox{ and } \quad
    \int_{\Gamma(t)} \Rh z \dd \sigma = 0.
  \end{equation*}
\end{remark}

The following bounds are immediate:
\begin{theorem}
  \label{thm:Pih-bound}
  For $z \in H^1(\Gamma(t))$,
  \begin{equation}
    \label{eq:89}
    \norm{ \pi_h z }_{H^1(\Gamma(t))} \le c \norm{ z }_{H^1(\Gamma(t))}, \qquad \norm{ \pi_h z - z }_{L^2(\Gamma(t))} \le c h \norm{ z }_{H^1(\Gamma(t))}.
  \end{equation}
  For $z \in H^2(\Gamma(t))$, 
  \begin{equation}
    \label{eq:104}
    \norm{ \pi_h z - z }_{L^2(\Gamma(t))} +
    h \norm{ \nabla_\Gamma ( \pi_h z - z ) }_{L^2(\Gamma(t))}
    \le c h^2 \norm{ z }_{H^2(\Gamma(t))},
  \end{equation}
  and for $1 \le q \le \infty$, such that $H^1(\Gamma(t))$ embeds into $L^q(\Gamma(t))$,
  \begin{equation}
    \label{eq:2}
    \norm{ \nabla_\Gamma ( \pi_h z - z ) }_{L^q(\Gamma(t))} \le c h^{1 + \min( 0, n/q - n/2 )} \norm{ z }_{H^2(\Gamma(t))}.
  \end{equation}
\end{theorem}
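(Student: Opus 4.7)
The plan is to establish the four estimates in order, carefully managing the mean-value constraint and the geometric mismatch between $a$ and $a_h$ throughout.

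\emph{Stability \eqref{eq:89}.} Testing the Ritz identity with $\phi_h = \Pi_h z$ gives $a_h(\Pi_h z, \Pi_h z) = a(z, \pi_h z)$; applying Cauchy--Schwarz and the lift equivalence \eqref{eq:79b} yields $\|\nabla_\Gamma \pi_h z\|_{L^2(\Gamma(t))} \le c \|\nabla_\Gamma z\|_{L^2(\Gamma(t))}$. For the $L^2$ bound, a Poincar\'e inequality controls $\pi_h z - \overline{\pi_h z}$, and the constant $\overline{\pi_h z}$ is controlled by the constraint $\int_{\Gamma_h(t)} \Pi_h z\,\mathrm{d}\sigma_h = \int_{\Gamma(t)} z\,\mathrm{d}\sigma$ combined with the surface-measure estimate \eqref{eq:87}, after absorbing an $O(h^2)\|\pi_h z\|_{L^2}$ remainder for $h$ sufficiently small. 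Standard lift equivalence then transfers the bound to $\Gamma(t)$.

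\emph{$H^1$ error in \eqref{eq:104}.} Decompose $\pi_h z - z = (\pi_h z - I_h z) + (I_h z - z)$ and control the interpolation part by Proposition~\ref{prop:ch-approx}. For the first term, test the Ritz equation with $\phi_h = \Pi_h z - (I_h z)^{-\ell} \in S_h(t)$ and insert $\pm\, a_h((I_h z)^{-\ell}, \phi_h)$ to get
\begin{equation*}
a_h\bigl(\Pi_h z - (I_h z)^{-\ell}, \phi_h\bigr) = a(z - I_h z, \varphi_h) + \bigl[a(I_h z, \varphi_h) - a_h((I_h z)^{-\ell}, \phi_h)\bigr].
\end{equation*}
The bracketed geometric error is bounded by \eqref{eq:ch-a-bound}, and choosing $\phi_h$ as itself gives $\|\nabla_\Gamma(\pi_h z - I_h z)\|_{L^2} \le c h \|z\|_{H^2}$.

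\emph{$L^2$ errors via duality.} The crucial ingredient is the quasi-orthogonality
\begin{equation*}
|a(\pi_h z - z, \varphi_h)| \le c h^2 \|\nabla_\Gamma z\|_{L^2} \|\nabla_\Gamma \varphi_h\|_{L^2} \qquad \text{for all } \varphi_h \in S_h^\ell(t),
\end{equation*}
which follows by subtracting the Ritz identity from $a(\pi_h z, \varphi_h)$ and invoking \eqref{eq:ch-a-bound}. Set $e = \pi_h z - z$, let $\psi \in H^2(\Gamma(t))$ solve $-\Delta_\Gamma \psi = e - \bar e$ with $\int_\Gamma \psi\,\mathrm{d}\sigma = 0$ so that $\|\psi\|_{H^2} \le c\|e\|_{L^2}$, and write
\begin{equation*}
\|e - \bar e\|_{L^2}^2 = a(\psi, e) = a(e, \psi - \pi_h \psi) + a(e, \pi_h \psi).
\end{equation*}
The first term is controlled by the previous step (for $z \in H^2$) or by stability (for $z \in H^1$); the second by quasi-orthogonality. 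Meanwhile $|\bar e|$ is $O(h^2)\|\pi_h z\|_{L^2}$ by \eqref{eq:87} and the mean constraint, which is of harmless order. This delivers both the second inequality in \eqref{eq:89} and the $L^2$ half of \eqref{eq:104}.

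\emph{$W^{1,q}$ error \eqref{eq:2}.} Again split $\pi_h z - z = (\pi_h z - I_h z) + (I_h z - z)$; the second term is handled by \eqref{eq:ch-approxp}. For the finite element piece $\pi_h z - I_h z \in S_h^\ell(t)$, apply a standard inverse estimate on each lifted element: for $q>2$, $\|\nabla_\Gamma \phi_h\|_{L^q} \le c h^{n/q-n/2}\|\nabla_\Gamma \phi_h\|_{L^2}$; for $q\le 2$ use H\"older's inequality on the bounded surface. Combining with the $L^2$-gradient bound from \eqref{eq:104} gives the stated exponent $1+\min(0,n/q-n/2)$.

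\emph{Main obstacle.} The principal technical point is the quasi-orthogonality identity that replaces Galerkin orthogonality: since $a$ and $a_h$ are evaluated on different surfaces, the natural consistency error is $O(h^2)$ in $H^1$, and in the duality step this must be balanced against the $O(h^2)$ correction to $\bar e$ forced by the non-homogeneous mean constraint. Once that balance is set up, the rest of the proof follows the classical finite element pattern combined with the geometric lemmas of Section~\ref{sec:geometric-estimates}.
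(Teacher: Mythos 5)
Your proposal is correct and follows essentially the same route the paper indicates: the paper's proof is a short sketch that delegates the $H^2$ estimates to the standard surface finite element theory of Dziuk, invokes an Aubin--Nitsche duality argument for the $L^2$ bound on $H^1$ functions, and handles the $L^q$ bound by the same splitting plus an inverse inequality --- precisely the quasi-orthogonality, interpolant-comparison, duality, and inverse-estimate steps you carry out in detail. Your explicit treatment of the $O(h^2)$ mean-value mismatch arising from the constraint $\int_{\Gamma_h(t)}\Pi_h z\,\mathrm{d}\sigma_h=\int_{\Gamma(t)}z\,\mathrm{d}\sigma$ is a detail the paper leaves implicit, and you handle it correctly.
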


\begin{proof}
  The $H^1$ stability result is clear and the $L^2$ error bound for a $H^1$ function follows from an Aubin-Nitsche trick. The $L^2$ results for $z \in H^2(\Gamma(t))$ follow from standard error estimates for the surface finite element \cite{Dzi88}. The $L^q$ result follows from the same splitting argument along with an inverse inequality. \qed
\end{proof}

\begin{corollary}
  The Ritz projection is bounded in $L^\infty$ and we have the bound
  \begin{equation}
    \label{eq:Pih-infty}
    \norm{ \Pi_h z }_{L^\infty(\Gamma_h(t))} \le \norm{ \pi_h z }_{L^\infty(\Gamma_h(t))} \le c \norm{ z }_{H^2(\Gamma(t))}.
  \end{equation}
\end{corollary}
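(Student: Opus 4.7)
The plan is to derive both inequalities from standard tools: the first from the pointwise definition of the lift, and the second from a triangle inequality against the Lagrangian interpolant combined with an inverse estimate.

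First, I would observe that the closest point projection $p(\cdot,t)$ gives a bijection between $\Gamma_h(t)$ and $\Gamma(t)$, and the lift is defined pointwise by $(\Pi_h z)^\ell(p(x,t),t) = \Pi_h z(x,t)$. Hence $\norm{\Pi_h z}_{L^\infty(\Gamma_h(t))}$ equals (so in particular is $\le$) $\norm{\pi_h z}_{L^\infty(\Gamma(t))}$, which establishes the first inequality.

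For the second inequality, I would insert the Lagrangian interpolant $I_h z$ (which is defined since $H^2(\Gamma(t))$ embeds continuously into $C^0(\Gamma(t))$ for $n \le 3$ by \myref[Lemma]{lem:sobolev-embed}) and write
\begin{equation*}
  \norm{\pi_h z}_{L^\infty(\Gamma(t))} \le \norm{I_h z}_{L^\infty(\Gamma(t))} + \norm{\pi_h z - I_h z}_{L^\infty(\Gamma(t))}.
\end{equation*}
The first term is bounded by $\norm{z}_{L^\infty(\Gamma(t))} \le c \norm{z}_{H^2(\Gamma(t))}$ because Lagrangian interpolation of a continuous function does not increase the sup norm, and then Sobolev embedding is applied. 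For the second term, since $\pi_h z - I_h z \in S_h^\ell(t)$, I would invoke the inverse estimate
\begin{equation*}
\norm{\varphi_h}_{L^\infty(\Gamma(t))} \le c\, h^{-n/2} \norm{\varphi_h}_{L^2(\Gamma(t))},
\end{equation*}
which is available by quasi-uniformity of the mesh (together with the stability of the lift, \myref[Lemma]{lem:ch-norm-equiv}). A triangle inequality together with the approximation bounds \eqref{eq:ch-approx} and \eqref{eq:104} then gives
\begin{equation*}
  \norm{\pi_h z - I_h z}_{L^2(\Gamma(t))} \le \norm{\pi_h z - z}_{L^2(\Gamma(t))} + \norm{z - I_h z}_{L^2(\Gamma(t))} \le c h^2 \norm{z}_{H^2(\Gamma(t))},
\end{equation*}
so $\norm{\pi_h z - I_h z}_{L^\infty(\Gamma(t))} \le c h^{2-n/2} \norm{z}_{H^2(\Gamma(t))}$, which for $n \le 3$ is uniformly bounded (in fact vanishes as $h \to 0$).

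There is no serious obstacle here; the mild subtlety is merely checking that the exponent $2 - n/2$ is nonnegative for $n=1,2,3$, so that the finite element difference can be absorbed into $c \norm{z}_{H^2(\Gamma(t))}$ uniformly in $h$. This combined with the Sobolev embedding bound on $\norm{z}_{L^\infty(\Gamma(t))}$ closes the estimate.
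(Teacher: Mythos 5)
Your proof is correct, but it takes a genuinely different route from the paper. The paper proves the $L^\infty$ bound by choosing $q>n$ with $H^1(\Gamma(t))\hookrightarrow L^q(\Gamma(t))$, invoking the $L^q$ gradient error estimate \eqref{eq:2} together with a Poincar\'e inequality and the mean-value control coming from \eqref{eq:87} to get $\norm{\pi_h z - z}_{W^{1,q}(\Gamma(t))}\le c\norm{z}_{H^2(\Gamma(t))}$, and then applying the embedding $W^{1,q}(\Gamma(t))\hookrightarrow L^\infty(\Gamma(t))$ from \myref[Lemma]{lem:sobolev-embed}; the identity $\norm{\Pi_h z}_{L^\infty(\Gamma_h(t))}=\norm{\pi_h z}_{L^\infty(\Gamma(t))}$ is used at the end exactly as in your first step. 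You instead insert the interpolant $I_h z$, use the discrete maximum principle for piecewise linear Lagrange interpolation, and control $\pi_h z - I_h z$ by an $L^\infty$--$L^2$ inverse estimate together with \eqref{eq:ch-approx} and \eqref{eq:104}, giving $c\,h^{2-n/2}\norm{z}_{H^2(\Gamma(t))}$ with $2-n/2\ge 1/2$ for $n\le 3$. Both arguments are legitimate under the paper's standing quasi-uniformity assumption (and indeed the paper's own proof of \eqref{eq:2} already hides an inverse inequality one level down). Your version is more elementary and makes the $h$-dependence explicit, but it leans on two ingredients the paper's route avoids at this level: the monotonicity of nodal interpolation, which is specific to piecewise linear elements, and the embedding $H^2(\Gamma(t))\hookrightarrow C^0(\Gamma(t))$ needed both to define $I_h z$ and to bound $\norm{z}_{L^\infty(\Gamma(t))}$ --- the latter is fine for $n\le 3$ but is worth stating, since \myref[Lemma]{lem:sobolev-embed} gives it only via $H^2\subset W^{1,q}$ for suitable $q>n$. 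The paper's $W^{1,q}$ route generalises more readily to higher-order elements and avoids any appeal to the interpolant's sup-norm stability.
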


\begin{proof}
  Let $1 < q < \infty$ be such that $H^1(\Gamma(t))$ embeds into $L^q(\Gamma(t))$ and such that $W^{1,q}(\Gamma(t))$ embeds into $L^\infty(\Gamma(t))$. The previous result, a Poincar\'e inequality and \eqref{eq:87} imply that
  \begin{align*}
    \norm{ \pi_h z - z }_{W^{1,q}(\Gamma(t))} & \le c \norm{ \nabla_\Gamma ( \pi_h z - z ) }_{L^q(\Gamma(t))} + \abs{ \int_{\Gamma(t)} \pi_h z - z \dd \sigma } \\
    & \le c \norm{ z }_{H^2(\Gamma(t))},
  \end{align*}
  for $h$ sufficiently small. We use a Sobolev embedding (\myref[Lemma]{lem:sobolev-embed}), to see
  \begin{equation*}
    \norm{ \pi_h z }_{L^\infty(\Gamma(t))} \le c \norm{ \pi_h z }_{W^{1,q}(\Gamma(t))} \le c \norm{ z }_{H^2(\Gamma(t))}.
  \end{equation*}
  It is clear that 
  \begin{equation*}
    \norm{ \Pi_h z }_{L^\infty(\Gamma_h(t))} = \norm{ \pi_h z }_{L^\infty(\Gamma(t))},
  \end{equation*}
  which completes the proof. \qed
\end{proof}

Since $\md_h \Pi_h z \neq \Pi_h \md_h z$, we also wish to have a bound on the discrete material derivative of this error for a function. We will assume that $z \in H^2(\Gamma(t))$ and $\md z \in H^2(\Gamma(t))$ for each $t$. Under this assumption, we may take a time derivative of \eqref{eq:102}, so that for all $\phi_h \in S_h^T$ with lift $\varphi_h \in S_h^{\ell,T}$,
\begin{equation}
  \label{eq:61}
  a_h( \md_h \Pi_h z, \phi_h )
  = a( \md_h z, \varphi_h ) + \big( b( v_h; z, \varphi_h ) - b_h( V_h; \Pi_h z, \phi_h ) \big).
\end{equation}
In fact using similar arguments to \myref[Lemma]{lem:pull-back}, we can construct a similar extension of a finite element function $\phi_h \in S_h(t)$ to a function $\tilde\phi_h \in S_h^T$ by
\begin{equation*}
  \tilde\phi_h( x, s ) = \sum_{j=1}^N \gamma_j \phi_j^N( x, s ) \quad \mbox{ for } ( x, s ) \in \G_{h,T} \quad \mbox{ where } 
  \phi_h( x ) = \sum_{j=1}^N \gamma_j \phi_j^N( x, t ).
\end{equation*}
Hence, we deduce that \eqref{eq:61} applies at each time $t \in (0,T)$ for $\phi_h \in S_h(t)$.

We start by proving two technical lemmas:

\begin{lemma}
  Given $z \colon \G_T \to \R$ with $z \in H^2(\Gamma(t))$ and $\md z \in H^2(\Gamma(t))$ for almost every time $t \in (0,T)$, then $\md_h \Pi_h z$ exists and we have the bound
  \begin{equation}
    \label{eq:ch-Pih-md-stability}
    \norm{ \nabla_{\Gamma_h} \big( \md_h \Pi_h z \big) }_{L^2(\Gamma_h(t))} \le c \big( \norm{ z }_{H^2(\Gamma(t))} + \norm{ \md z }_{H^2(\Gamma(t))} \big). 
  \end{equation}
\end{lemma}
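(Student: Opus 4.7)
The plan is to test the identity \eqref{eq:61} with the admissible choice $\phi_h = \md_h \Pi_h z \in S_h(t)$ at each time $t$, since the paragraph following \eqref{eq:61} guarantees this identity holds pointwise in time. The left-hand side then becomes
\[
a_h( \md_h \Pi_h z, \md_h \Pi_h z ) = \norm{ \nabla_{\Gamma_h} ( \md_h \Pi_h z ) }_{L^2(\Gamma_h(t))}^2,
\]
so the problem reduces to bounding each of the three terms on the right-hand side of \eqref{eq:61} by a multiple of $\norm{ \nabla_{\Gamma_h} ( \md_h \Pi_h z ) }_{L^2(\Gamma_h(t))}$ and then dividing through.

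For the first term, $a(\md_h z, (\md_h \Pi_h z)^\ell)$, I would apply Cauchy--Schwarz on $\Gamma(t)$ and use the stability of the lift \eqref{eq:79b} to obtain a factor $c \norm{ \nabla_\Gamma \md_h z }_{L^2(\Gamma(t))} \norm{ \nabla_{\Gamma_h} \md_h \Pi_h z }_{L^2(\Gamma_h(t))}$. For the two $b$-terms, I would use that $\B(v_h) = (\nabla_\Gamma \cdot v_h)\id - 2 D(v_h)$ and $\B_h(V_h)$ are bounded in $L^\infty$ by a constant depending only on $\norm{v}_{C^2(\N_T)}$: this follows from \myref[Lemma]{lem:vh-bound} together with \eqref{eq:92}. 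Cauchy--Schwarz then yields $|b(v_h; z, (\md_h \Pi_h z)^\ell)| \le c \norm{ \nabla_\Gamma z }_{L^2(\Gamma(t))} \norm{ \nabla_{\Gamma_h} \md_h \Pi_h z }_{L^2(\Gamma_h(t))}$ and $|b_h(V_h; \Pi_h z, \md_h \Pi_h z)| \le c \norm{ \nabla_{\Gamma_h} \Pi_h z }_{L^2(\Gamma_h(t))} \norm{ \nabla_{\Gamma_h} \md_h \Pi_h z }_{L^2(\Gamma_h(t))}$, and the latter factor is controlled by $c \norm{ z }_{H^1(\Gamma(t))}$ using the $H^1$ stability of $\Pi_h$ from \myref[Theorem]{thm:Pih-bound}. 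Collecting and dividing, I obtain
\[
\norm{ \nabla_{\Gamma_h} \md_h \Pi_h z }_{L^2(\Gamma_h(t))} \le c \big( \norm{ \md_h z }_{H^1(\Gamma(t))} + \norm{ z }_{H^1(\Gamma(t))} \big).
\]

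The final step is to replace $\md_h z$ by $\md z$. The remark before the smooth-triangulated transport lemma states that $v - v_h$ is tangential on $\Gamma(t)$, which means $\md_h z - \md z = (v_h - v) \cdot \nabla_\Gamma z$. Expanding the gradient and applying \eqref{eq:92} term by term gives
\[
\norm{ \md_h z - \md z }_{H^1(\Gamma(t))} \le c h \norm{ z }_{H^2(\Gamma(t))},
\]
so that $\norm{ \md_h z }_{H^1(\Gamma(t))} \le \norm{ \md z }_{H^1(\Gamma(t))} + c h \norm{ z }_{H^2(\Gamma(t))}$ and the claimed bound \eqref{eq:ch-Pih-md-stability} follows by absorbing $\norm{ z }_{H^1}$ into $\norm{ z }_{H^2}$ and $\norm{ \md z }_{H^1}$ into $\norm{ \md z }_{H^2}$.

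The main obstacle is the bookkeeping in the last step: the right-hand side of \eqref{eq:61} naturally involves $\md_h z$ rather than $\md z$, so some care is needed to convert between the two and to verify that the first-order geometric error $|v - v_h| \lesssim h^2$ and the gradient error $|\nabla_\Gamma (v - v_h)| \lesssim h$ in \eqref{eq:92} are indeed enough to keep the $h$-dependent remainder benign when $z \in H^2(\Gamma(t))$. The bounds on the $\B$-tensors, while not deep, also need to be traced back through \myref[Lemma]{lem:vh-bound} and the definition of $v_h$ to confirm they depend only on $\norm{v}_{C^2(\N_T)}$ and not on $h$.
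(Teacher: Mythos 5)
Your proof is correct and follows the same skeleton as the paper's: both start from \eqref{eq:61}, test with $\phi_h = \md_h \Pi_h z$, bound the right-hand side, and divide through (the paper phrases the division as a Young's inequality, absorbing $\tfrac12 \norm{\nabla_{\Gamma_h}\phi_h}_{L^2(\Gamma_h(t))}^2$ into the left). The one place you diverge is the treatment of the $b$-terms: the paper controls the combination $b(v_h; z, \varphi_h) - b_h(V_h; \Pi_h z, \phi_h)$ by inserting the Ritz projection and invoking the error estimate \eqref{eq:104} together with the geometric consistency bound \eqref{eq:ch-b-bound}, whereas you estimate the two terms separately via $L^\infty$ bounds on $\B(v_h)$ and $\B_h(V_h)$ (traced back through \myref[Lemma]{lem:vh-bound} and \eqref{eq:92}) and the $H^1$-stability of $\Pi_h$ from \myref[Theorem]{thm:Pih-bound}. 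Your route is slightly more elementary and perfectly adequate here, because the target estimate carries no power of $h$; the paper's route reuses machinery that is needed anyway for the sharper bound \eqref{eq:md-Pih-bound} proved immediately afterwards. Your final conversion of $\md_h z$ to $\md z$ is also sound --- indeed the paper's estimate \eqref{eq:94} already gives $\norm{\nabla_\Gamma(\md z - \md_h z)}_{L^2(\Gamma(t))} \le c h^2 \norm{z}_{H^2(\Gamma(t))}$, which is stronger than the $O(h)$ bound you derive by hand, so that remainder is certainly benign.
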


\begin{proof}
  To show the bound, we start from \eqref{eq:61}, using a Young's inequality, \eqref{eq:104} and \eqref{eq:ch-b-bound} gives
  \begin{equation*}
    a_h( \md_h \Pi_h z, \phi_h ) \le c \big( \norm{ \nabla_\Gamma \md z }_{L^2(\Gamma(t))}^2 + \norm{ z }_{H^2(\Gamma(t))}^2 \big) + \frac{1}{2} \norm{ \nabla_{\Gamma_h} \phi_h }_{L^2(\Gamma_h(t))}^2.
  \end{equation*}
  Applying this bound with $\phi_h = \md_h \Pi_h z$ gives the estimate \eqref{eq:ch-Pih-md-stability}. \qed
\end{proof}

\begin{lemma}
  Define the function $T_h$ on $S_h^\ell(t)$ by
  \begin{equation}
    \label{eq:Th-defn}
    T_h( \varphi_h ) := a( \md_h ( \pi_h z - z ), \varphi_h ).
  \end{equation}
  Then we have the bound
  \begin{equation}
    \label{eq:T-bound}
    \abs{ T_h( \varphi_h ) } \le c h \left( \norm{ z }_{H^2(\Gamma(t))} + \norm{ \md z }_{H^2(\Gamma(t))} \right) \norm{ \nabla_{\Gamma} \varphi_h }_{L^2(\Gamma(t))}.
  \end{equation}
  Furthermore, for any $\eta \in H^2(\Gamma(t))$, we have that
  \begin{equation}
    \label{eq:T-bound2}
    \begin{aligned}
      \abs{ T_h( \varphi_h ) } 
      & \le c h \norm{ z }_{H^2(\Gamma(t))} \norm{ \nabla_\Gamma ( \varphi_h - \eta ) }_{L^2(\Gamma(t))} + c h^2 \norm{ z }_{H^2(\Gamma(t))} \norm{ \eta }_{H^2(\Gamma(t))} \\
      & \quad + c h^2 \left( \norm{ z }_{H^2(\Gamma(t))} + \norm{ \md z }_{H^2(\Gamma(t))} \right) \norm{ \nabla_\Gamma \varphi_h }_{L^2(\Gamma(t))}.
    \end{aligned}
  \end{equation}
\end{lemma}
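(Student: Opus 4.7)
The plan is to compare $\md_h \pi_h z$ with $\md_h z$ by routing everything through the discrete-level identity \eqref{eq:61} for $\md_h \Pi_h z$, then converting back to the continuous bilinear forms using the geometric error estimates from \myref[Lemma]{lem:ch-geometric-bounds}. Concretely, writing $T_h(\varphi_h) = a(\md_h \pi_h z, \varphi_h) - a(\md_h z, \varphi_h)$ and substituting the identity \eqref{eq:61} for $a(\md_h z, \varphi_h)$, we obtain
\begin{equation*}
  T_h(\varphi_h) = \bigl[ a(\md_h \pi_h z, \varphi_h) - a_h(\md_h \Pi_h z, \phi_h) \bigr] + \bigl[ b(v_h; z, \varphi_h) - b_h(V_h; \Pi_h z, \phi_h) \bigr].
\end{equation*}
The first bracket is a difference between $a$ and $a_h$ of the same function, so \eqref{eq:amd-bound} gives a factor $h^2 \norm{\nabla_{\Gamma_h} \md_h \Pi_h z}_{L^2}$, and by \eqref{eq:ch-Pih-md-stability} this is $O(h^2)(\norm{z}_{H^2(\Gamma(t))} + \norm{\md z}_{H^2(\Gamma(t))}) \norm{\nabla_\Gamma \varphi_h}_{L^2(\Gamma(t))}$, which is already better than we need.

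For the $b$-bracket I would first add and subtract $b(v_h; \pi_h z, \varphi_h)$:
\begin{equation*}
  b(v_h; z, \varphi_h) - b_h(V_h; \Pi_h z, \phi_h)
  = b(v_h; z - \pi_h z, \varphi_h) + \bigl[ b(v_h; \pi_h z, \varphi_h) - b_h(V_h; \Pi_h z, \phi_h) \bigr].
\end{equation*}
The second term is controlled by $c h^2 \norm{\nabla_\Gamma \pi_h z}_{L^2} \norm{\nabla_\Gamma \varphi_h}_{L^2}$ via \eqref{eq:ch-b-bound} together with the $H^1$-stability of the Ritz projection \eqref{eq:89}. The first term is the critical one: using the straightforward estimate $|b(v_h; z - \pi_h z, \varphi_h)| \le c \norm{\nabla_\Gamma(z - \pi_h z)}_{L^2} \norm{\nabla_\Gamma \varphi_h}_{L^2}$ and the Ritz error bound \eqref{eq:104} gives the $O(h)$ contribution that yields \eqref{eq:T-bound}.

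For the sharper bound \eqref{eq:T-bound2} the main obstacle is to squeeze an extra factor of $h$ out of $b(v_h; z - \pi_h z, \varphi_h)$. This is where the auxiliary function $\eta \in H^2(\Gamma(t))$ enters: I would split
\begin{equation*}
  b(v_h; z - \pi_h z, \varphi_h) = b(v_h; z - \pi_h z, \varphi_h - \eta) + b(v_h; z - \pi_h z, \eta).
\end{equation*}
The first piece is controlled crudely by $c h \norm{z}_{H^2} \norm{\nabla_\Gamma(\varphi_h - \eta)}_{L^2}$ and gives the first term of \eqref{eq:T-bound2}. For the second piece I would integrate by parts on the closed surface $\Gamma(t)$ using the Green-type formula \eqref{eq:int-by-parts}, writing
\begin{equation*}
  b(v_h; z - \pi_h z, \eta) = \int_{\Gamma(t)} \nabla_\Gamma(z - \pi_h z) \cdot \B(v_h)\nabla_\Gamma \eta \dd\sigma = -\int_{\Gamma(t)} (z - \pi_h z)\, \nabla_\Gamma \cdot \bigl(\B(v_h)\nabla_\Gamma \eta\bigr) \dd\sigma,
\end{equation*}
since $\Gamma(t)$ has no boundary and the integrand in the boundary term vanishes after projecting $\B(v_h)\nabla_\Gamma\eta$ onto the tangent space. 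Bounding by Cauchy–Schwarz and using the $L^2$ estimate \eqref{eq:104} together with $\eta \in H^2$ and $\B(v_h)$ smooth (via \myref[Lemma]{lem:vh-bound} applied to $v_h$) then gives the $c h^2 \norm{z}_{H^2(\Gamma(t))} \norm{\eta}_{H^2(\Gamma(t))}$ contribution. Collecting this with the earlier $O(h^2)$ terms yields \eqref{eq:T-bound2}. The only delicate step is the integration by parts, where one must verify that the second-order derivatives of $\B(v_h)\nabla_\Gamma \eta$ make sense under the standing $C^2$-smoothness of $v$ and the $H^2$-regularity of $\eta$.
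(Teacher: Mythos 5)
Your decomposition of $T_h$ is exactly the paper's: substitute \eqref{eq:61}, peel off the $a$ versus $a_h$ discrepancy with \eqref{eq:amd-bound} and \eqref{eq:ch-Pih-md-stability}, add and subtract $b(v_h;\pi_h z,\varphi_h)$, and handle the residual $b(v_h; z-\pi_h z,\varphi_h)$ crudely for \eqref{eq:T-bound} and via an $\eta$-splitting plus integration by parts for \eqref{eq:T-bound2}. The first bound is therefore fine. But there is a genuine gap in your argument for \eqref{eq:T-bound2}: you integrate by parts in $b(v_h; z-\pi_h z,\eta)$ with the tensor $\B(v_h)$ still present. The discrete material velocity $v_h$ is built from the piecewise-linear velocity $V_h$ via \eqref{eq:75}, so $\nabla_\Gamma v_h$, and hence $\B(v_h)$, is only defined element-wise on the lifted triangulation and jumps across element edges. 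The quantity $\nabla_\Gamma\cdot\bigl(\B(v_h)\nabla_\Gamma\eta\bigr)$ does not exist globally, and an element-wise integration by parts leaves uncontrolled co-normal jump terms on the edges. Your closing remark that the step is justified "under the standing $C^2$-smoothness of $v$" conflates $v$ with $v_h$; \myref[Lemma]{lem:vh-bound} only gives an $L^\infty$ bound on $\B_h(V_h)$, not differentiability of $\B(v_h)$.

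The paper avoids this by inserting one more splitting before introducing $\eta$: it writes $b(v_h;\pi_h z-z,\varphi_h) = b(v_h-v;\pi_h z-z,\varphi_h) + b(v;\pi_h z-z,\varphi_h-\eta) + b(v;\pi_h z-z,\eta)$. The first piece is $O(h^2)$ because \eqref{eq:92} gives $\abs{\nabla_\Gamma(v-v_h)}\le ch$ while \eqref{eq:104} contributes another factor of $h$; the integration by parts is then performed only with the globally smooth $\B(v)$. You should adopt this extra step. A secondary, more minor issue: since $\Gamma(t)$ is curved, the partial integration formula \eqref{eq:int-by-parts} produces a mean-curvature term $\int_{\Gamma(t)}\sum_{i,j} H\nu_j\B(v)_{ij}\,(z-\pi_h z)\,\underline{D}_i\eta\dd\sigma$ in addition to the divergence term; it does not vanish by "projecting onto the tangent space," but it is harmless since it is bounded by $c\norm{z-\pi_h z}_{L^2(\Gamma(t))}\norm{\eta}_{H^1(\Gamma(t))}$ and hence also yields an $O(h^2)$ contribution.
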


\begin{proof}
  Using \eqref{eq:102} and \eqref{eq:61}, we see for $\phi_h \in S_h(t)$, with lift $\varphi_h \in S_h^\ell(t)$,
  \begin{equation*}
    \begin{aligned}
      T_h( \varphi_h ) & = a( \md_h \pi_h z, \varphi_h ) - a( \md_h z, \varphi_h ) \\
      & = b( v_h; z - \pi_h z, \varphi_h ) + \big( a( \md_h \pi_h z, \varphi_h ) - a_h( \md_h \Pi_h z, \phi_h ) \big) \\
      & \qquad + \big( b( v_h; \pi_h z, \varphi_h ) - b_h( V_h; \Pi_h z, \phi_h ) \big).
    \end{aligned}
  \end{equation*}
  Using our bound on the Ritz projection \eqref{eq:104}, and two geometric estimates \eqref{eq:ch-b-bound} and \eqref{eq:amd-bound}, we have that
  \begin{equation*}
    \begin{aligned}
      \abs{ T_h( \varphi_h ) }
      & \le c h \norm{ z }_{H^2(\Gamma(t))} \norm{ \nabla_\Gamma \varphi_h }_{L^2(\Gamma(t))} \\
      & \qquad + c h^2 \big( \norm{ \nabla_\Gamma \md_h \pi_h z }_{L^2(\Gamma(t))} + \norm{ \nabla_\Gamma \pi_h z }_{L^2(\Gamma(t))} \big) \norm{ \nabla_\Gamma \varphi_h }_{L^2(\Gamma(t))} \\
      & \le c h \left( \norm{ z }_{H^2(\Gamma(t))} + \norm{ \md z }_{H^2(\Gamma(t))} \right) \norm{ \nabla_\Gamma \varphi_h }_{L^2(\Gamma(t))}.
    \end{aligned}
  \end{equation*}
  
  We can improve this estimate by comparing $v_h$ to the smooth velocity $v$ and introducing a smooth function $\eta \in H^2(\Gamma(t))$. Then, we split the first term in $T_h(\varphi_h)$ into
  \begin{align*}
    & b( v_h; \pi_h z - z, \varphi_h ) \\
    & \quad = b( v_h - v; \pi_h z - z, \varphi_h ) + b( v; \pi_h z - z, \varphi_h - \eta ) + b( v; \pi_h z - z, \eta ).
  \end{align*}
  Using the smoothness of $\eta$, the final term, $b( v; \pi_h z - z, \eta )$, is bounded using an integration by parts argument given by \cite[p.~21]{DziEll13}:
  \begin{align*}
    b( v; \varphi, \eta ) & = \int_{\Gamma(t)} \sum_{i,j=1}^{n+1} H \nu_j \B(v)_{ij} \, \varphi \, \underline{D}_i \eta \dd \sigma
    - \int_{\Gamma(t)} \varphi \sum_{i,j=1}^{n+1} \underline{D}_j \big( \B(v)_{ij} \underline{D}_i \eta \big) \dd \sigma.
  \end{align*}
  Hence, we obtain
  \begin{equation*}
    \abs{ b( v; \varphi, \eta ) } \le c \norm{ \varphi }_{L^2(\Gamma(t))} \norm{ \eta }_{H^2(\Gamma(t))}.
  \end{equation*}
  Combining these calculations with \eqref{eq:92} and \eqref{eq:104}, we get
  \begin{equation*}
    \begin{aligned}
      \abs{ b( v_h; \pi_h z - z, \varphi_h ) }
      & \le c h \norm{ z }_{H^2(\Gamma(t))} \norm{ \nabla_\Gamma ( \varphi_h - \eta ) }_{L^2(\Gamma(t))} \\
      & \qquad + c h^2 \norm{ z }_{H^2(\Gamma(t))} \big( \norm{ \nabla_\Gamma \varphi_h }_{L^2(\Gamma(t))} + \norm{ \eta }_{H^2(\Gamma(t))} \big).
    \end{aligned}
  \end{equation*}
  Hence, we have
  \begin{equation*}
    \begin{aligned}
      \abs{ T_h( \varphi_h ) } 
      & \le c h \norm{ z }_{H^2(\Gamma(t)} \norm{ \nabla_\Gamma ( \varphi_h - \eta ) }_{L^2(\Gamma(t))} + c h^2 \norm{ z }_{H^2(\Gamma(t))} \norm{ \eta }_{H^2(\Gamma(t))} \\
      & \quad + c h^2 \left( \norm{ z }_{H^2(\Gamma(t))} + \norm{ \md z }_{H^2(\Gamma(t))} \right) \norm{ \nabla_\Gamma \varphi_h }_{L^2(\Gamma(t))},
    \end{aligned}
  \end{equation*}
  which is the second estimate. \qed
\end{proof}

These results allow us to show an estimate for the difference between the material derivative of a function and its Ritz projection.

\begin{lemma}
  \label{lem:md-Pih-bound}
  For $z \colon \G_T \to \R$ with $z, \md z \in H^2(\Gamma(t))$, we have
  \begin{equation}
    \label{eq:md-Pih-bound}
    \begin{aligned}
      & \norm{ \md_h( \pi_h z - z ) }_{L^2(\Gamma(t))} + h \norm{ \nabla_\Gamma \md_h (\pi_h z - z ) }_{L^2(\Gamma(t))} \\
      & \qquad\quad \le c h^2 \big( \norm{ z }_{H^2(\Gamma(t))} + \norm{ \md z }_{H^2(\Gamma(t))} \big).
    \end{aligned}
  \end{equation}
\end{lemma}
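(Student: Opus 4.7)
My plan is to decompose $\md_h(\pi_h z - z)$ into a finite element piece and two smooth error pieces, estimate the latter directly, and control the finite element piece via the functional $T_h$ from \eqref{eq:Th-defn}. Concretely, write $\md_h(\pi_h z - z) = \alpha + \beta + \gamma$, where
\[
  \alpha := \md_h \pi_h z - \pi_h \md z, \qquad \beta := \pi_h \md z - \md z, \qquad \gamma := \md z - \md_h z.
\]
Applying \myref[Theorem]{thm:Pih-bound} to $\md z$ together with \eqref{eq:93}--\eqref{eq:94} yields $\norm{\beta}_{L^2(\Gamma(t))} + h \norm{\nabla_\Gamma \beta}_{L^2(\Gamma(t))} \le c h^2 \norm{\md z}_{H^2(\Gamma(t))}$ and the analogous bound for $\gamma$ with $\norm{z}_{H^2(\Gamma(t))}$. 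Moreover $\alpha \in S_h^\ell(t)$ by \eqref{eq:81}, so only $\alpha$ remains to bound.

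For the $H^1$-seminorm, I test against $\alpha$ itself: since $a(\md_h(\pi_h z - z), \alpha) = T_h(\alpha)$ by \eqref{eq:Th-defn}, the decomposition gives
\[
  \norm{\nabla_\Gamma \alpha}_{L^2(\Gamma(t))}^2 = T_h(\alpha) - a(\beta, \alpha) - a(\gamma, \alpha).
\]
Estimating $T_h(\alpha)$ by \eqref{eq:T-bound} and the remaining terms by Cauchy-Schwarz with the $H^1$-seminorm bounds on $\beta, \gamma$ yields $\norm{\nabla_\Gamma \alpha}_{L^2} \le c h (\norm{z}_{H^2} + \norm{\md z}_{H^2})$, which covers the gradient half of \eqref{eq:md-Pih-bound}.

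For the $L^2$ bound, I run an Aubin-Nitsche duality argument. Split $\alpha = (\alpha - \bar\alpha) + \bar\alpha$, where $\bar\alpha := |\Gamma(t)|^{-1} \int_{\Gamma(t)} \alpha \dd\sigma$. For the mean-zero part, solve the dual Neumann problem $-\Delta_\Gamma \chi = \alpha - \bar\alpha$ with $\int_{\Gamma(t)} \chi \dd\sigma = 0$, giving $\chi \in H^2(\Gamma(t))$ with $\norm{\chi}_{H^2} \le c \norm{\alpha - \bar\alpha}_{L^2}$. Then
\[
  \norm{\alpha - \bar\alpha}_{L^2}^2 = a(\alpha, \chi) = a(\alpha, \chi - \pi_h \chi) + T_h(\pi_h \chi) - a(\beta, \pi_h \chi) - a(\gamma, \pi_h \chi),
\]
and each summand is bounded by $c h^2 (\norm{z}_{H^2} + \norm{\md z}_{H^2}) \norm{\chi}_{H^2}$: the first via the gradient estimate just proved, $T_h(\pi_h \chi)$ via \eqref{eq:T-bound2} with $\eta = \chi$, $a(\beta, \pi_h \chi)$ after splitting $\pi_h \chi = \chi + (\pi_h \chi - \chi)$ and integrating by parts against the smooth piece using the $L^2$ bound in \eqref{eq:104}, and $a(\gamma, \pi_h \chi)$ directly from \eqref{eq:94}. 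For the mean $\bar\alpha$, I combine the mass-preservation property of $\pi_h$ on $\Gamma_h(t)$ with the transport formulae on both surfaces to reduce $\int \alpha \dd\sigma$ to contributions involving $(z - \pi_h z)\nabla_\Gamma \cdot v$ and $\pi_h z \, \nabla_\Gamma \cdot (v - v_h)$; integration by parts on the latter converts the merely $O(h)$ bound on $\nabla_\Gamma(v - v_h)$ from \eqref{eq:92} into an $O(h^2)$ direct estimate, giving $|\bar\alpha| \le c h^2 (\norm{z}_{H^2} + \norm{\md z}_{H^2})$.

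The main obstacle is controlling $\bar\alpha$: although $\alpha$ is the natural error quantity, it is not intrinsically mean-zero, so the estimate requires combining both transport formulae with all three geometric lemmas \eqref{eq:87}, \eqref{eq:ch-g-bound} and \eqref{eq:92}, and applying integration by parts to $v - v_h$ to sidestep differentiating the unfavourable $O(h)$ factor. By contrast, the mean-zero duality step is standard but crucially exploits the sharpened bound \eqref{eq:T-bound2} (rather than \eqref{eq:T-bound}) to save the extra power of $h$.
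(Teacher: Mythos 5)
Your argument is correct and follows essentially the same route as the paper: the gradient bound is obtained through the functional $T_h$ and \eqref{eq:T-bound}, and the $L^2$ bound through an Aubin--Nitsche duality argument using the sharpened estimate \eqref{eq:T-bound2} together with a mass-conservation/transport computation for the (nonzero) mean. The only differences are cosmetic --- the paper compares against the interpolant $I_h(\md z)$ rather than $\pi_h (\md z)$ and applies duality directly to $e = \md_h(\pi_h z - z)$ minus its mean, which keeps the velocity contribution in the benign form $\int_{\Gamma(t)} (\pi_h z - z)\, \nabla_\Gamma \cdot v_h \dd \sigma$ and thereby avoids your extra integration by parts against $v - v_h$ (and note that your mean estimate also needs \eqref{eq:ch-mdh-muh-bound}, not just \eqref{eq:87}, to handle the term $\dt \int_{\Gamma(t)} (\pi_h z - z) \dd \sigma$).
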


\begin{proof}
  We start by rewriting the error as
  \begin{equation}
    \label{eq:ch-mdpih-1}
    \begin{aligned}
      & a( \md_h ( \pi_h z - z ), \md_h ( \pi_h z - z ) ) \\
      & \quad = a( \md_h ( \pi_h z - z ), \md_h \pi_h z - I_h ( \md z ) ) + a( \md_h ( \pi_h z - z ), I_h( \md z ) - \md z ) \\
      & \quad\qquad + a( \md_h ( \pi_h z - z ), \md z - \md_h z ).
    \end{aligned}
  \end{equation}
  We can bound the first term on the right-hand side using \eqref{eq:T-bound} by
  \begin{align*}
    & \abs{ a( \md_h ( \pi_h z - z ), \md_h \pi_h z - I_h ( \md z ) ) } = \abs{ T_h( \md_h \pi_h z - I_h ( \md z ) ) } \\
    & \quad \le c h \big( \norm{ \md z }_{H^2(\Gamma(t))} + \norm{ z }_{H^2(\Gamma(t))} \big) \norm{ \nabla_\Gamma ( \md_h \pi_h z - I_h ( \md z ) ) }_{L^2(\Gamma(t))} \\
    & \quad \le c h \big( \norm{ \md z }_{H^2(\Gamma(t))} + \norm{ z }_{H^2(\Gamma(t))} \big) \norm{ \nabla_\Gamma \md_h ( \pi_h z - z ) }_{L^2(\Gamma(t))} \\
    & \quad\qquad + c h \norm{ \nabla_\Gamma \md_h ( \pi_h z - z ) }_{L^2(\Gamma(t))}^2.
  \end{align*}
  The second term is bounded using the approximation property \eqref{eq:ch-approx}:
  \begin{align*}
    \abs{ a( \md_h ( \pi_h z - z ), I_h( \md z ) - \md z ) } \le c h \norm{ \nabla_\Gamma \md_h ( \pi_h z - z ) }_{L^2(\Gamma(t))} \norm{ \md z }_{H^2(\Gamma(t))}.
  \end{align*}
  Finally, we use our estimate of the difference of material derivatives \eqref{eq:93} to bound the third term:
  \begin{align*}
    \abs{ a( \md_h ( \pi_h z - z ), \md z - \md_h z ) } \le c h^2 \norm{ \nabla_\Gamma \md_h ( \pi_h z - z ) }_{L^2(\Gamma(t))} \norm{ z }_{H^2(\Gamma(t))}.
  \end{align*}
  Combining these three bounds in \eqref{eq:ch-mdpih-1}, we get the desired gradient norm bound for $h$ sufficiently small.

  To show the $L^2$ bound, we use the Aubin-Nitsche trick. We start by writing $e = \md_h ( \pi_h z - z )$, then $e$ is in $L^2$ so can be set as the right-hand side for the dual problem: Find $\zeta \in H^1(\Gamma(t))$ such that
  \begin{equation}
    \label{eq:ch-dual-problem}
    a( \varphi, \zeta ) = m( e - c_0, \varphi ) \quad \mbox{ for all } \varphi \in H^1(\Gamma(t)) \mbox{, and } \int_{\Gamma(t)} \zeta \dd \sigma = 0,
  \end{equation}
  where $c_0 = \frac{1}{\abs{\Gamma(t)}} \int_{\Gamma(t)} e \dd \sigma$. We know \cite{Aub82} that \eqref{eq:ch-dual-problem} has a unique solution and satisfies the regularity result
  \begin{equation}
     \label{eq:ch-dual-reg}
     \norm{ \zeta }_{H^2(\Gamma(t))} \le c \norm{ e }_{L^2(\Gamma(t))}.
   \end{equation}
   We note that from $\int_{\Gamma_h(t)} \Pi_h z \dd \sigma_h = \int_{\Gamma(t)} z \dd \sigma$, that
   \begin{align*}
     \abs{ \Gamma(t) } \abs{ c_0 } & = \int_{\Gamma(t)} \md_h ( \pi_h z - z ) \dd \sigma \\
     & = \dt \int_{\Gamma(t)} \pi_h z - z \dd \sigma - \int_{\Gamma(t)} ( \pi_h z - z ) \nabla_\Gamma \cdot v_h \dd \sigma.
   \end{align*}
   We remark that from \eqref{eq:87} and \eqref{eq:ch-mdh-muh-bound}, we have
   \begin{align*}
     \dt \int_{\Gamma(t)} \pi_h z - z \dd \sigma 
     & = \dt \left( \int_{\Gamma(t)} \pi_h z \left( 1 - \frac{1}{\mu_h} \right) \dd \sigma \right) \\
     & \le c h^2 \big( \norm{ z }_{H^2(\Gamma(t))} + \norm{ \md z }_{H^2(\Gamma(t))} \big),
   \end{align*}
   and using \eqref{eq:104}, we infer
   \begin{equation*}
     \int_{\Gamma(t)} ( \pi_h z - z ) \nabla_\Gamma \cdot v_h \dd \sigma
     \le c \norm{ \pi_h z - z }_{L^2(\Gamma(t))} \le c h^2 \norm{ z }_{H^2(\Gamma(t))}.
   \end{equation*}
   This implies
   \begin{equation*}
     \abs{c_0} \le c h^2 \big( \norm{z}_{H^2(\Gamma(t))} + \norm{ \md z }_{H^2(\Gamma(t))} \big).
   \end{equation*}
   
   These calculations lead to
   \begin{equation}
     \label{eq:mdPi-bound}
     m( e, e ) - \abs{\Gamma(t)}^2 c_0^2 = a( \zeta, e ) = a( \zeta - I_h \zeta, e ) + T_h( I_h \zeta ).
   \end{equation}
   The first term on the right-hand side is bounded using the approximation property \eqref{eq:ch-approx} and the gradient norm bound on $e$, together with the dual regularity result \eqref{eq:ch-dual-reg}:
   \begin{align*}
     \abs{ a( \zeta - I_h, e ) }
     & \le c h^2 \norm{ e }_{L^2(\Gamma(t))} \big( \norm{ z }_{H^2(\Gamma(t))} + \norm{ \md z }_{H^2(\Gamma(t))} \big).
   \end{align*}
   The second term is estimated using the improved bound \eqref{eq:T-bound2} on $T_h( I_h \zeta )$ with $\eta = \zeta$. Applying the approximation \eqref{eq:ch-approx} we see
   \begin{align*}
     \abs{ T_h( I_h \zeta ) }
     & \le c h^2 \big( \norm{ z }_{H^2(\Gamma(t))} + \norm{ \md z }_{H^2(\Gamma(t))} \big) \norm{ e }_{L^2(\Gamma(t))}.
   \end{align*}
   Applying these two bounds in \eqref{eq:mdPi-bound} gives the desired result. \qed
\end{proof}

%--- well posedness ---------------------------------------------------------%

\section{Well-posedness of the continuous problem}
\label{sec:well-posedn-cont}

We use this section to show some properties of the continuous scheme based on the energy estimates coming from \myref[Theorem]{thm:ch-fem-wp} along with further some estimates. We will use these properties in later sections but they are also important results in their own right.

\subsection{Improved bounds on the finite element scheme}

In order to derive some improved bounds on $\md_h U_h$ and $W_h$, we will assume that $U_{h,0} = \Pi_h u_0$ with $u_0 \in H^2(\Gamma_0)$. It is clear that assumption \eqref{eq:ch-init-assumption} still holds in this case. In fact, we will make use of the bound
\begin{equation}
  \label{eq:smoothu0-bound}
  \E_0^h + \tilde{C}_0(U_0) \le c_\eps \big( 1 + \norm{ u_0 }_{H^2(\Gamma_0)}^2 + \norm{ u_0 }_{H^2(\Gamma_0)}^4 \big) + \tilde{\tilde{C}}_1(u_0) =: \tilde{C}_1(u_0),
\end{equation}
where
\begin{equation*}
  \tilde{\tilde{C}}_1(u_0) := c_\eps \left( \abs{ \int_{\Gamma_{0}} u_0 \dd \sigma } + \abs{ \int_{\Gamma_{0}} u_0 \dd \sigma }^2 + \abs{ \int_{\Gamma_{0}} u_0 \dd \sigma }^3 \right).
\end{equation*}
This implies the constant $C_0(U_0)$ from Theorem~\ref{thm:ch-fem-wp} can be bounded by
\begin{align*}
  C_0(U_0) & \le \max \left\{ \frac{ \exp(cT) \tilde{C}_1(u_0) }{ \sqrt{1 - \left( \exp( 2 c T ) - 1 \right) \left( h \tilde{C}_1(u_0) + h^2 \tilde{C}_1(u_0)^2 \right) }}, \tilde{C}_1(u_0)\right\} \\
  & =: C_1(u_0).
\end{align*}
This is not essential for well-posedness of the finite element method but will be used for the well-posedness results for the continuous problem.

First, we need a bound on $W_h|_{t=0}$:
\begin{lemma}
  \label{lem:ch-fem-w0-bound}
  Under the assumption that $u_0 \in H^2(\Gamma_0)$, the following bound holds for $W_h|_{t=0}$:
  \begin{equation}
    \label{eq:ch-fem-w0-bound}
    \norm{ W_h(0, \cdot) }_{L^2(\Gamma_h(0))} \le c_\eps \big( \norm{ u_0 }_{H^2(\Gamma_0)} + \norm{ u_0 }_{H^2(\Gamma_0)}^3 \big).
  \end{equation}
\end{lemma}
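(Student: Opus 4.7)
The plan is to test the second finite element equation \eqref{eq:ch-fem-u} at $t = 0$ against $\phi_h = W_h(\cdot,0)$, which gives
\begin{equation*}
  \norm{W_h(\cdot,0)}_{L^2(\Gamma_h(0))}^2 = \eps\, a_h\bigl(U_0, W_h(\cdot,0)\bigr) + \tfrac{1}{\eps}\, m_h\bigl(\psi'(U_0), W_h(\cdot,0)\bigr),
\end{equation*}
and then bound each term on the right using $U_0 = \Pi_h u_0$ with $u_0 \in H^2(\Gamma_0)$. The $L^2$ norm of $W_h$ on the left absorbs a matching factor on the right, yielding a linear (not quadratic) bound on $\norm{W_h}_{L^2}$.

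For the first term, I would use the definition of the Ritz projection \eqref{eq:102}: since $U_0 = \Pi_h u_0$, we have $a_h(U_0, W_h) = a(u_0, w_h)$, where $w_h = W_h^\ell$ is the lift. Because $u_0 \in H^2(\Gamma_0)$ and $\Gamma_0$ is closed, the Green's formula \eqref{eq:greens-form} gives $a(u_0, w_h) = -m(\Delta_\Gamma u_0, w_h)$. Cauchy--Schwarz together with the norm equivalence of the lift \eqref{eq:79} then yields
\begin{equation*}
  \eps\,\abs{a_h(U_0, W_h)} \le c\,\eps\, \norm{u_0}_{H^2(\Gamma_0)}\, \norm{W_h}_{L^2(\Gamma_h(0))}.
\end{equation*}

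For the second term, I would use $\psi'(z) = z^3 - z$ and the pointwise bound $|\psi'(U_0)| \le c(|U_0|^3 + |U_0|)$, combined with the Sobolev embedding $H^1(\Gamma_h(0)) \hookrightarrow L^6(\Gamma_h(0))$ from \myref[Lemma]{lem:sobolev-embedh} (valid for $n=1,2,3$), to obtain
\begin{equation*}
  \norm{\psi'(U_0)}_{L^2(\Gamma_h(0))} \le c\bigl(\norm{U_0}_{H^1(\Gamma_h(0))}^3 + \norm{U_0}_{H^1(\Gamma_h(0))}\bigr).
\end{equation*}
Then the $H^1$-stability of the Ritz projection \eqref{eq:89} (in lifted form, combined with the lift equivalence) gives $\norm{U_0}_{H^1(\Gamma_h(0))} \le c\,\norm{u_0}_{H^1(\Gamma_0)} \le c\,\norm{u_0}_{H^2(\Gamma_0)}$, so
\begin{equation*}
  \tfrac{1}{\eps}\,\abs{m_h(\psi'(U_0), W_h)} \le \tfrac{c}{\eps}\bigl(\norm{u_0}_{H^2(\Gamma_0)}^3 + \norm{u_0}_{H^2(\Gamma_0)}\bigr)\,\norm{W_h}_{L^2(\Gamma_h(0))}.
\end{equation*}

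Combining the two estimates and dividing through by $\norm{W_h(\cdot,0)}_{L^2(\Gamma_h(0))}$ delivers the bound \eqref{eq:ch-fem-w0-bound}. The only mildly delicate step is justifying $a_h(\Pi_h u_0, W_h) = a(u_0, w_h)$ together with integration by parts; everything else reduces to Cauchy--Schwarz plus a Sobolev embedding, so there is no real obstacle provided one verifies that $u_0 \in H^2(\Gamma_0)$ is indeed needed (to make $\Delta_\Gamma u_0 \in L^2$) and that the geometric distortion from the lift contributes only absolute constants here.
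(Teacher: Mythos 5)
Your proposal is correct and follows essentially the same route as the paper: test \eqref{eq:ch-fem-u} at $t=0$ with $W_h(\cdot,0)$, use the Ritz projection property and Green's formula to reduce $\eps a_h(U_{h,0},W_h)$ to $-\eps m(\Delta_\Gamma u_0, w_h)$, and control the nonlinear term via the $L^6$ Sobolev embedding together with the $H^1$-stability of $\Pi_h$ and the lift equivalence. The only detail the paper makes explicit that you gloss over is why the equation holds \emph{at} $t=0$ (namely that the nodal values $\alpha,\beta$ are $C^1$ in time by Theorem~\ref{thm:ch-fem-wp}), but this is a minor point.
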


\begin{proof}
  Since $\alpha, \beta$ are $C^1([0,T];\R^N)$ in time (\myref[Theorem]{thm:ch-fem-wp}), we known that \eqref{eq:ch-fem-u} holds at time $t = 0$. We see that from the choice $U_{h,0} = \Pi_h u_0$, using Green's formula \eqref{eq:greens-form}, we have
  \begin{equation*}
    a_h( U_{h,0}, W_h(0,\cdot) ) = a( u_0, w_h( 0, \cdot) ) = - m( \Delta_\Gamma u_0, w_h( 0, \cdot ) ).
  \end{equation*}
  This implies that
  \begin{align*}
    m_h( W_h( 0, \cdot ), W_h( 0, \cdot ) )
    & = \eps a_h( U_{h,0}, W_h( 0, \cdot ) ) + \frac1\eps m_h( \psi'( U_{h,0} ), W_h( 0, \cdot ) ) \\
    & \le c_\eps \big( \norm{ u_0 }_{H^2(\Gamma_0)} + \norm{ u_0 }_{H^2(\Gamma_0)}^3 \big) \norm{ W_h( 0, \cdot ) }_{L^2(\Gamma_h(0))}.
  \end{align*}
  In the last line we have used \eqref{eq:79} and the Sobolev embedding of $H^1(\Gamma(t)) \hookrightarrow L^6(\Gamma(t))$ (\myref[Lemma]{lem:sobolev-embed}). \qed
\end{proof}

From \myref[Theorem]{thm:ch-fem-wp}, we see that $\beta \in C^1([0,T], \R^N)$ so $\md_h W_h$ exists. Hence, we may take the time derivative of \eqref{eq:ch-fem-u} to see, for $\phi_h \in S_h^T$,
\begin{equation}
  \label{eq:ch-fem-u-dt}
  \begin{aligned}
    & \eps \big( a_h( \md_h U_h, \phi_h ) + b_h( V_h; U_h, \phi_h ) \big) \\
    & \quad + \frac1\eps \big( m_h( \psi''(U_h) \md_h U_h, \phi_h ) + g_h( V_h; \psi'(U_h), \phi_h ) \big) \\
    & \quad - \big( m_h( \md_h W_h, \phi_h ) + g_h( V_h; W_h, \phi_h ) \big) = 0.
  \end{aligned}
\end{equation}

\begin{lemma}
  \label{lem:ch-fem-bound-improved}
  Under the assumption that $u_0 \in H^2(\Gamma_0)$, we have the bound
  \begin{equation}
    \begin{aligned}
      & \eps \int_0^T \norm{ \md_h U_h }_{L^2(\Gamma_h(t))}^2 \dd t
      + \sup_{t \in (0,T)} \norm{ W_h }_{L^2(\Gamma_h(t))}^2  \le C_2(u_0).
    \end{aligned}
  \end{equation}
  with
  $C_2(u_0)$ given by
  \begin{equation*}
    C_2(u_0) := c_\eps \big( \norm{ u_0 }_{H^2(\Gamma_0)} + \norm{ u_0 }_{H^2(\Gamma_0)}^3 + C_1(u_0) + C_1(u_0)^2  \big).
  \end{equation*}
\end{lemma}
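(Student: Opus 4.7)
The plan is to test the time-differentiated equation (\ref{eq:ch-fem-u-dt}) with $W_h \in S_h(t)$ and to combine the result with (\ref{eq:ch-fem-w'}) tested with $\md_h U_h$, so that $\eps \norm{\md_h U_h}^2_{L^2(\Gamma_h(t))}$ appears on the left-hand side of a differential inequality for $\norm{W_h}^2_{L^2(\Gamma_h(t))}$. Applying the transport identity $m_h(\md_h W_h, W_h) = \tfrac12 \dt m_h(W_h, W_h) - \tfrac12 g_h(V_h; W_h, W_h)$ to the first test, and using the second test to write $a_h(\md_h U_h, W_h) = -\norm{\md_h U_h}^2_{L^2(\Gamma_h(t))} - g_h(V_h; U_h, \md_h U_h)$, one arrives at the key identity
\begin{align*}
  \tfrac12 \dt \norm{W_h}^2_{L^2(\Gamma_h(t))} + \eps \norm{\md_h U_h}^2_{L^2(\Gamma_h(t))} & = -\eps g_h(V_h; U_h, \md_h U_h) + \eps b_h(V_h; U_h, W_h) \\
  & \quad + \tfrac{1}{\eps} m_h(\psi''(U_h) \md_h U_h, W_h) \\
  & \quad + \tfrac{1}{\eps} g_h(V_h; \psi'(U_h), W_h) - \tfrac12 g_h(V_h; W_h, W_h).
\end{align*}

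The remaining work is to bound every right-hand term by $\tfrac{\eps}{4} \norm{\md_h U_h}^2_{L^2(\Gamma_h(t))} + c_\eps ( 1 + \norm{W_h}^2_{L^2(\Gamma_h(t))} + \norm{\nabla_{\Gamma_h} W_h}^2_{L^2(\Gamma_h(t))} )$, so that the $\md_h U_h$ contributions absorb into the left. The geometric $g_h$ and $b_h$ contributions are routine using \myref[Lemma]{lem:vh-bound}, Young's inequality, and the energy bound (\ref{eq:72}), together with the Sobolev embedding $H^1(\Gamma_h(t)) \hookrightarrow L^6(\Gamma_h(t))$ from \myref[Lemma]{lem:sobolev-embedh} (needed to control $\norm{\psi'(U_h)}_{L^2(\Gamma_h(t))} \le c(1+\norm{U_h}_{H^1(\Gamma_h(t))}^3)$). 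The main obstacle is the cubic nonlinear term $\tfrac{1}{\eps} m_h(\psi''(U_h) \md_h U_h, W_h)$: expanding $\psi''(U_h) = 3U_h^2 - 1$ and applying Hölder's inequality with exponents $(3,2,6)$ produces the bound $\tfrac{c}{\eps} \norm{U_h}^2_{L^6(\Gamma_h(t))} \norm{\md_h U_h}_{L^2(\Gamma_h(t))} \norm{W_h}_{L^6(\Gamma_h(t))}$. Embedding $\norm{W_h}_{L^6} \le c \norm{W_h}_{H^1}$, using the energy bound $\norm{U_h}_{H^1(\Gamma_h(t))}^4 \le c\, C_1(u_0)^2$, and finishing with Young's inequality delivers the desired decomposition.

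After absorbing the $\md_h U_h$ contributions and integrating on $[0,t]$, one obtains
\begin{equation*}
  \norm{W_h(t)}^2_{L^2(\Gamma_h(t))} + \eps \int_0^t \norm{\md_h U_h}^2_{L^2(\Gamma_h(s))} \dd s \le \norm{W_h(0)}^2_{L^2(\Gamma_h(0))} + c_\eps \int_0^t \bigl(1 + \norm{W_h(s)}^2_{L^2(\Gamma_h(s))} + \norm{\nabla_{\Gamma_h} W_h(s)}^2_{L^2(\Gamma_h(s))}\bigr) \dd s.
\end{equation*}
The integral $\int_0^T \norm{\nabla_{\Gamma_h} W_h}^2 \dd t$ is controlled by $C_1(u_0)$ via (\ref{eq:72}), while $\norm{W_h(\cdot,0)}^2_{L^2(\Gamma_h(0))}$ is controlled by $c_\eps(\norm{u_0}_{H^2(\Gamma_0)} + \norm{u_0}_{H^2(\Gamma_0)}^3)^2$ via \myref[Lemma]{lem:ch-fem-w0-bound}; this is precisely where the assumption $u_0 \in H^2(\Gamma_0)$ is consumed. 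A standard Gronwall inequality applied to $t \mapsto \norm{W_h(t)}^2_{L^2(\Gamma_h(t))}$ then delivers simultaneously $\sup_{t \in (0,T)} \norm{W_h}^2_{L^2(\Gamma_h(t))}$ and the $L^2$-in-time estimate on $\sqrt\eps\, \md_h U_h$, with a constant $C_2(u_0)$ of the form stated.
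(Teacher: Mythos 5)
Your proposal follows the paper's proof essentially verbatim: the same combination of \eqref{eq:ch-fem-u-dt} tested with $W_h$ and \eqref{eq:ch-fem-w'} tested with $\eps\,\md_h U_h$, the same transport identity to produce $\frac12\dt\norm{W_h}^2_{L^2(\Gamma_h(t))}$, the same H\"older--Sobolev--Young treatment of the cubic term $m_h(\psi''(U_h)\md_h U_h, W_h)$, and the same Gronwall closure using \myref[Lemma]{lem:ch-fem-w0-bound} and the energy bound \eqref{eq:72}. The only (immaterial) difference is that you bound $\norm{U_h}_{H^1(\Gamma_h(t))}^4$ by a constant before rather than after the Gronwall step, which affects only the precise power of $C_1(u_0)$ in the final constant, not the result.
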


\begin{proof}
  We start by subtracting \eqref{eq:ch-fem-u-dt} tested with $W_h$ from \eqref{eq:ch-fem-w'} tested with $\eps \md_h U_h$ and use the transport formula \eqref{eq:mh-transport} to arrive at
  \begin{equation}
    \label{eq:ch-imp-bound-2}
    \begin{aligned}
      & \eps m_h( \md_h U_h, \md_h U_h ) + \frac12 \dt m_h( W_h, W_h ) \\
      & \quad = - \eps \big( g_h( V_h; \md_h U_h, U_h ) + b_h( V_h; U_h, W_h ) \big) \\
      & \quad\qquad + \frac1\eps \big( m_h( \psi''(U_h) \md_h U_h, W_h ) + g_h( V_h; \psi'(U_h), W_h ) \big) - \frac12 g_h( V_h; W_h, W_h ).
    \end{aligned}
  \end{equation}
  Note that using a H\"{o}lder inequality, Young's inequality with $\eps$, and the Sobolev embedding (\myref[Lemma]{lem:sobolev-embedh}) we have
  \begin{align*}
    & \abs{ m_h( \psi''(U_h) \md_h U_h, W_h ) } \\
    & \quad = \abs{ \int_{\Gamma_h(t)} \psi''( U_h ) \md_h U_h W_h \dd \sigma_h }
    = \abs{ \int_{\Gamma_h(t)} ( 3 U_h^2 - 1 ) \md_h U_h W_h \dd \sigma_h } \\
    & \quad \le \frac{\eps}{4} \norm{ \md_h U_h }_{L^2(\Gamma_h(t))}^2 
    + c_\eps \big( \norm{ U_h }_{H^1(\Gamma_h(t))}^2 \norm{ W_h }_{H^1(\Gamma_h(t))}^2 + \norm{ W_h }_{L^2(\Gamma_h(t))}^2 \big).
  \end{align*}
  Applying this estimate in \eqref{eq:ch-imp-bound-2}, we have
  \begin{align*}
    & \eps m_h( \md_h U_h, \md_h U_h ) + \dt m_h( W_h, W_h ) \\
    & \quad \le c_\eps \left( \norm{ U_h }_{H^1(\Gamma_h(t))}^2
    + \norm{ W_h }_{H^1(\Gamma_h(t))}^2 
    + \norm{ U_h }_{H^1(\Gamma_h(t))}^2 \norm{ W_h }_{H^1(\Gamma_h(t))}^2 \right).
  \end{align*}
  Integrating in time using a Gronwall inequality gives us
  \begin{align*}
    & \eps \int_0^T \norm{ \md_h U_h }_{L^2(\Gamma_h(t))}^2 \dd t
    + \sup_{t \in (0,T)} \norm{ W_h }_{L^2(\Gamma_h(t))}^2 \\
    & \quad \le \norm{ W_h( \cdot, 0 ) }_{L^2(\Gamma_h(t))}^2 + c_\eps \int_0^T \left( \norm{ U_h }_{H^1(\Gamma_h(t))}^2 + \norm{ W_h }_{H^1(\Gamma_h(t))}^2  \right) \dd t \\
    & \quad\qquad + c_\eps \sup_{t \in (0,T)} \norm{ U_h }_{H^1(\Gamma_h(t))}^2 \int_0^T \norm{ W_h }_{H^1(\Gamma_h(t))}^2 \dd t.
  \end{align*}
  Applying the bounds from \myref[Theorem]{thm:ch-fem-wp}, \myref[Lemma]{lem:ch-fem-w0-bound} and \eqref{eq:smoothu0-bound} completes the proof. \qed
\end{proof}

\subsection{Existence}

The idea of the existence proof is to show that the lift of the solutions to finite element scheme \eqref{eq:ch-fem} converges, along a subsequence, to a solution of the continuous equations.

We suppose that $u_0 \in H^2(\Gamma_0)$ is a given function. In this section, we will take $U_{h,0} = \Pi_h u_0$ with $\Pi_h$ the Ritz projection defined in \eqref{eq:102}. Since the Ritz projection is stable in $H^1$, the stability bound in \myref[Theorem]{thm:ch-fem-wp} holds independently of $h$. Furthermore, the stability bounds from \myref[Lemma]{lem:ch-norm-equiv} imply we may transform this bound to $\{ \Gamma(t) \}$ and bound the lifts $u_h = U_h^\ell$ and $w_h = W_h^\ell$ by
\begin{align*}
  & \sup_{t \in (0,T)} \int_{\Gamma(t)} \frac{\eps}{2} \abs{ \nabla_{\Gamma} u_h }^2 + \frac1\eps \psi( u_h ) \dd \sigma + \int_0^T \norm{ w_h }_{H^1(\Gamma(t))}^2 \dd t \le C_1(u_0).
\end{align*}
Our assumption that $u_0 \in H^2(\Gamma_0)$ allows the use of the improved bounds in \myref[Lemma]{lem:ch-fem-bound-improved}. Using similar lifting arguments we have
\begin{equation*}
  \int_0^T \norm{ \md u_h }_{L^2(\Gamma(t))}^2 \dd t \le C_2(u_0).
\end{equation*}
These bounds, along with the conservation of mass property \eqref{eq:mass-conserv}, imply that $u_h$ is uniformly bounded in $L^\infty_{H^1} \cap H^1(\G_T)$ and $w_h$ in $L^2_{H^1}$. Hence, we may extract subsequences (for which we will still use the subscript $h$), and functions $\bar{u}$ and $\bar{w}$ with $\bar{u} \in L^\infty_{H^1} \cap H^1(\G_T)$, and $\bar{w} \in L^2_{H^1}$ such that
\begin{equation}
  \label{eq:ch-weak-conv1}
  \begin{aligned}
    u_h \weakto \bar{u} & \quad \mbox{ weakly in } H^1(\G_T) \qquad
    w_h \weakto \bar{w} & \quad \mbox{ weakly in } L^2_{H^1}.
  \end{aligned}
\end{equation}
We remark that these results imply $\md \bar{u} \in L^2_{L^2}$ and $\md u_h \weakto \md \bar{u}$ weakly in $L^2_{L^2}$. Furthermore, from the compactness result (\myref[Proposition]{prop:compact-embed}) we infer that we may take a further subsequence (still denoted $u_h$) such that
\begin{equation*}
%  \label{eq:ch-ae-conv}
  u_h \to \bar{u} \quad \mbox{ almost everywhere in } \G_T.
\end{equation*}
Using a Dominated Convergence Theorem-type argument \cite[Lemma~8.3]{Rob01}, since $\norm{ \psi'(u_h) }_{L^2(\G_T)} \le c \norm{ u_h }_{H^1(\G_T)}^3$ is bounded independently of $h$, we infer that
\begin{equation}
  \label{eq:ch-psi-conv}
  \psi'(u_h) \weakto \psi'(\bar{u}) \quad \mbox{ weakly in } L^2_{L^2}.
\end{equation}

We will show that $\bar{u}$ and $\bar{w}$ satisfy \eqref{eq:ch-weak}. For $\varphi \in L^2_{H^1}$, we write $\phi_h = \Pi_h \varphi$, where $\Pi_h$ is the Ritz-projection \eqref{eq:102}, and $\varphi_h = \phi_h^\ell = \pi_h \varphi$. 

Using \eqref{eq:ch-fem}, we have
\begin{equation}
  \label{eq:ch-ex-1}
  \begin{aligned}
    & m( \md \bar{u}, \varphi ) + g( v; \bar{u}, \varphi ) + a( w, \varphi ) \\
    & \quad = \big( m( \md \bar{u}, \varphi ) - m_h( \md_h U_h, \phi_h ) \big)
    + \big( g( v; \bar{u}, \varphi ) - g_h( V_h; U_h, \phi_h ) \big) \\
    & \quad\qquad + \big( a( w, \varphi ) - a_h( W_h, \phi_h ) \big).
  \end{aligned}
\end{equation}
and
\begin{equation}
  \label{eq:ch-ex-2}
  \begin{aligned}
    & \eps a( \bar{u}, \varphi ) + \frac1\eps m( \psi'( \bar{u} ), \varphi ) - m( \bar{w}, \varphi ) \\
    & \quad = \eps \big( a( \bar{u}, \varphi ) - a_h( U_h, \phi_h ) \big)
    + \frac1\eps \big( m( \psi'( \bar{u} ), \varphi ) - m_h( \psi'(U_h), \phi_h ) \big) \\
    & \quad\qquad - \big( m( \bar{w}, \varphi ) - m_h( W_h, \phi_h ) \big).
  \end{aligned}
\end{equation}

We may use the geometric estimates shown in Section~\ref{sec:geometric-estimates} and the bounds on the Ritz projection from Section~\ref{sec:ritz-projection} to see to bound the terms on the right-hand sides of these equations. We will denote by $c(h)$ a generic constant depending on $h$, which may also depend on $\eps$, such that $c(h) \to 0$ as $h \to 0$. Integrating in time, this implies
\begin{align*}
  & \int_0^T m( \md \bar{u}, \varphi ) + g( v; \bar{u}, \varphi ) + a( \bar{w}, \varphi ) \dd t \\
  & \quad \le \int_0^T m( \md \bar{u} - \md u_h, \varphi ) + g( v; \bar{u} - u_h, \varphi ) + a( \bar{w} - w_h, \varphi ) \dd t \\
  & \quad\quad + c(h) \int_0^T \big( \norm{ \md_h u_h }_{L^2(\Gamma(t))} + \norm{ u_h }_{H^1(\Gamma(t))} + \norm{ \nabla_\Gamma w_h }_{L^2(\Gamma(t))} \big) \norm{ \varphi }_{H^1(\Gamma(t))} \dd t,
\end{align*}
and
\begin{align*}
  & \int_0^T \eps a( \bar{u}, \varphi ) + \frac1\eps m( \psi'(\bar{u}), \varphi ) - m( \bar{w}, \varphi ) \dd t \\
  & \quad \le \int_0^T \eps a( \bar{u} - u_h, \varphi ) + \frac1\eps m( \psi'(\bar{u}) - \psi'(u_h), \varphi ) - m( \bar{w} - w_h, \varphi ) \dd t \\
  & \quad\qquad + c(h) \int_0^T \big( \norm{ u_h }_{H^1(\Gamma(t))} + \norm{ w_h }_{L^2(\Gamma(t))} \big) \norm{ \varphi }_{H^1(\Gamma(t))} \dd t.
\end{align*}
We may send $h \to 0$ in the right-hand sides of both previous equations, and use the convergence results \eqref{eq:ch-weak-conv1} and \eqref{eq:ch-psi-conv}, so that for all $\varphi \in L^2_{H^1}$ we arrive at
\begin{align*}
  \int_0^T m( \md \bar{u}, \varphi ) + g( v; \bar{u}, \varphi ) + a( \bar{w}, \varphi ) \dd t & = 0 \\
  \int_0^T \eps a( \bar{u}, \varphi ) + \frac1\eps m( \psi'(u), \varphi ) - m( w, \varphi ) & = 0.
\end{align*}
Finally, we use \myref[Lemma]{lem:tiredy} to transform this equality into a almost everywhere in time equality so that the pair $\bar{u}, \bar{w}$ satisfy \eqref{eq:ch-weak}.

To show that $\bar{u}$ achieves the initial condition, we start by choosing $\varphi \in C^2( \bar\G_T )$ and continue with the notation $\varphi_h = \pi_h \varphi$. Using the discrete transport formula \eqref{eq:83}, the lift of the finite element solution $u_h$ satisfies
\begin{equation*}
  \int_0^T m( u_h, \varphi_h ) \dot\alpha \dd t 
  = - \int_0^T \big( m( \md_h u_h, \varphi_h ) + m( u_h, \md_h \varphi_h ) + g( v_h; u_h, \varphi_h ) \big) \alpha \dd t,
\end{equation*}
for all $\alpha \in C_c^\infty(0,T)$. Using similar limiting arguments as above, with the addition of \eqref{eq:md-Pih-bound}, we obtain the identity
\begin{equation*}
  \int_0^T m( \bar{u}, \varphi ) \dot \alpha \dd t
  = - \int_0^T \big( m( \md \bar{u}, \varphi ) + m( \bar{u}, \md \varphi ) + g( v; \bar{u}, \varphi ) \big) \alpha \dd t.
\end{equation*}
In fact, by density of $C^2(\G_T)$ functions in $H^1(\G_T)$ \cite[Theorem~2.4]{Heb00}, we see that this equality holds for all $\varphi \in H^1(\G_T)$. This implies that $m( \bar{u}, \varphi )$ is weakly differentiable as a function on $(0,T)$ with weak derivative $m( \md \bar{u}, \varphi ) + m( \bar{u}, \md \varphi ) + g( v; \bar{u}, \varphi )$. Since $\bar{u}, \varphi \in H^1(\G_T)$, this weak derivative is a function in $L^1(0,T)$, and hence we infer that $m( \bar{u}, \varphi )$ is absolutely continuous on $[0,T]$ \cite[Section~4.9, Theorem~1]{EvaGar92}. In particular, $\norm{ \bar{u} }_{L^2(\Gamma(t))}$ is absolutely continuous, which means that we can interpret $\bar{u}(\cdot,0)$ as an $L^2(\Gamma_0)$ function. The absolute continuity of $m(\bar{u},\varphi)$ for $\varphi \in C^2(\G_T)$ also implies that
\begin{equation}
  \label{eq:clever}
  \begin{aligned}
    & m\big( \bar{u}(\cdot, t), \varphi(\cdot, t) \big)
    - m\big( \bar{u}(\cdot, 0), \varphi(\cdot, 0) \big) \\
    & \quad\qquad = \int_0^t m( \md \bar{u}, \varphi ) + g( v; \bar{u}, \varphi ) + m( \bar{u}, \md \varphi ) \dd s.
  \end{aligned}
\end{equation}

Next, we choose $\varphi \in C^2(\G_T)$ with $\varphi( \cdot, T ) = 0$. It is clear that $\varphi \in L^2_{H^1}$, hence we can use the limiting equation and \eqref{eq:clever} to see that
\begin{equation*}
  \int_0^T - m( \bar{u}, \md \varphi ) + a( \bar{w}, \varphi ) \dd t = m( \bar{u}(\cdot,0), \varphi(\cdot,0)).
\end{equation*}
We can do the same in the finite element scheme for $\phi_h = \Pi_h \varphi$, using the transport formula \eqref{eq:mh-transport}:
\begin{equation*}
  \int_0^T - m_h( U_h, \md_h \phi_h ) + a_h( W_h, \phi_h ) \dd t = m_h( \Pi_h u_0, \phi_h( \cdot,0 )).
\end{equation*}
The above calculations show that we are able to take the limit $h \to 0$ (in the appropriate sense) to see that
\begin{equation*}
  \int_0^T - m( \bar{u}, \md \varphi ) + a( \bar{w}, \varphi ) \dd t = m( u_0, \varphi( \cdot, 0 ) ).
\end{equation*}
Therefore, by comparing terms, we have shown that $\bar{u}(\cdot,0) = u_0$ almost everywhere in $\Gamma_0$ by the Fundamental Lemma of the Calculus of Variations.

Hence we have shown the following result:

\begin{theorem}
  \label{thm:ch-wp}
  Given $u_0 \in H^2(\Gamma_0)$ there exists a weak solution pair $(u,w)$ of the Cahn-Hilliard equation in the sense of \myref[Definition]{def:ch-solution}. Furthermore the solution satisfies the energy bound
  \begin{equation}
    \label{eq:ch-wp-bound}
    \begin{aligned}
      & \sup_{t \in (0,T)} \int_{\Gamma(t))} \frac{\eps}{2} \abs{ \nabla_\Gamma u}^2 + \frac1\eps \psi(u) \dd \sigma + \int_0^T \norm{ w }_{H^1(\Gamma(t))}^2 \dd t \le C_2(u_0).
    \end{aligned}
  \end{equation}
\end{theorem}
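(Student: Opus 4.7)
The plan is a Galerkin-style compactness argument using the finite element scheme \eqref{eq:ch-fem} as the approximating sequence. First I would fix $u_0 \in H^2(\Gamma_0)$ and choose the discrete initial data $U_{h,0} = \Pi_h u_0$, which by stability of the Ritz projection makes $\E_0^h$ uniformly bounded in $h$, so \eqref{eq:ch-init-assumption} holds with a constant depending only on $\norm{u_0}_{H^2(\Gamma_0)}$. Combining \myref[Theorem]{thm:ch-fem-wp} and \myref[Lemma]{lem:ch-fem-bound-improved} then provides uniform (in $h$) bounds on $U_h$ in $L^\infty_{H^1}$, on $W_h$ in $L^2_{H^1}$, and on $\md_h U_h$ in $L^2_{L^2}$. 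Transferring these to the lifted quantities $u_h = U_h^\ell$ and $w_h = W_h^\ell$ via \myref[Lemma]{lem:ch-norm-equiv}, I obtain uniform bounds in $L^\infty_{H^1} \cap H^1(\G_T)$ and $L^2_{H^1}$ respectively.

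Next, I would extract a subsequence such that $u_h \weakto \bar u$ weakly in $H^1(\G_T)$ and $w_h \weakto \bar w$ weakly in $L^2_{H^1}$, with $\bar u \in L^\infty_{H^1} \cap H^1(\G_T)$ and $\bar w \in L^2_{H^1}$. The compact embedding $H^1(\G_T) \hookrightarrow L^2(\G_T)$ (\myref[Proposition]{prop:compact-embed}) then upgrades this to strong $L^2$ convergence and, along a further subsequence, a.e.\ convergence. A Dominated Convergence argument together with the cubic growth bound on $\psi'$ and the Sobolev embedding $H^1(\Gamma(t)) \hookrightarrow L^6(\Gamma(t))$ (\myref[Lemma]{lem:sobolev-embed}) yields $\psi'(u_h) \weakto \psi'(\bar u)$ weakly in $L^2_{L^2}$, which is the essential nonlinear step.

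To pass to the limit, for $\varphi \in L^2_{H^1}$ I would use $\phi_h = \Pi_h \varphi$ as test function in \eqref{eq:ch-fem} (integrated against arbitrary $\alpha \in C([0,T])$). Splitting each bilinear form as discrete form minus continuous form plus continuous form, the geometric estimates of Section~\myref{sec:geometric-estimates} together with the Ritz projection bounds \eqref{eq:104} and \eqref{eq:md-Pih-bound} control the discretisation errors by $c(h) \to 0$, while the weak convergences control the remainders. This gives the equations \eqref{eq:ch-weak} in integrated form against any $\alpha$, and \myref[Lemma]{lem:tiredy} then converts them into the required a.e.\ in time identities. The energy bound \eqref{eq:ch-wp-bound} follows from the uniform bound on $u_h$ and $w_h$ together with weak lower semicontinuity of the norms involved.

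The initial condition requires the most care and will be the main obstacle. I would follow the clever calculation sketched in the text: test with $\varphi \in C^2(\bar\G_T)$ using the transport formula \eqref{eq:83} to derive a weak ODE for $t \mapsto m(\bar u, \varphi)$, extended by density to $\varphi \in H^1(\G_T)$ using \cite[Theorem~2.4]{Heb00}. This shows $m(\bar u, \varphi)$ is absolutely continuous, giving meaning to $\bar u(\cdot,0)$ as an $L^2(\Gamma_0)$ trace. Then, choosing test functions with $\varphi(\cdot,T) = 0$ and comparing the limiting identity with the same computation done at the discrete level (where $U_h(\cdot,0) = \Pi_h u_0 \to u_0$ in $L^2(\Gamma_0)$), the Fundamental Lemma of the Calculus of Variations forces $\bar u(\cdot,0) = u_0$ almost everywhere on $\Gamma_0$. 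The delicate point here is controlling the difference between $\md \varphi$ and $\md_h \pi_h \varphi$ under the limit, for which \myref[Lemma]{lem:md-Pih-bound} is precisely designed. \qed
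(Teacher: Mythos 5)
Your proposal is correct and follows essentially the same route as the paper: Ritz-projected initial data, the uniform bounds of Theorem~\ref{thm:ch-fem-wp} and Lemma~\ref{lem:ch-fem-bound-improved}, lifting, weak/a.e.\ compactness, the dominated-convergence treatment of $\psi'$, passage to the limit against $\Pi_h\varphi$ with the geometric and Ritz estimates, Lemma~\ref{lem:tiredy} for the a.e.-in-time identities, and the absolute-continuity argument for the initial trace. The only cosmetic gloss is that the $\psi(u)$ term in \eqref{eq:ch-wp-bound} is not a norm, so one should invoke Fatou's lemma with the a.e.\ convergence rather than weak lower semicontinuity for that contribution.
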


\subsection{Uniqueness}

To show the uniqueness result, we require an inverse Laplacian on $\Gamma(t)$. For $z \in L^2(\Gamma(t))$ with $\int_{\Gamma(t)} z \dd \sigma = 0$, we define $\G z$ the inverse Laplacian of $z$ as the unique solution of
\begin{equation}
  \label{eq:ch-inv-lap}
  a( \G z, \varphi ) = m( z, \varphi ) \quad \mbox{ for all } \varphi \in H^1(\Gamma(t)) \mbox{, and } \int_{\Gamma(t)} \G z \dd \sigma = 0.
\end{equation}

We will write 
\begin{equation*}
  \norm{ z }_{-1} := \norm{ \nabla_\Gamma \G z }_{L^2(\Gamma(t))} = a( \G z, \G z )^{\frac12}.
\end{equation*}
and remark that
\begin{equation*}
  \norm{ z }_{-1}^2 = m( \G z, z ).
\end{equation*}

It is clear that if $z \in L^2(\Gamma(t))$ then $\G z \in H^1(\Gamma(t))$. We also have a similar result for the material derivative of $\G z$.

\begin{lemma}
  If $z \in H^1(\G_T)$, with $\int_{\Gamma(t)} z \dd \sigma = 0$, then $\G z \in H^1(\G_T)$.
\end{lemma}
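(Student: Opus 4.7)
The goal is to establish $\G z \in L^2(\G_T)$, $\nabla_\Gamma \G z \in L^2(\G_T)$ and $\md \G z \in L^2_{L^2}$; by the equivalence of norms stated after \myref[Proposition]{prop:compact-embed}, these three facts together yield $\G z \in H^1(\G_T)$.

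First I would dispose of the spatial part. For each fixed $t$, the Lax--Milgram theorem applied to \eqref{eq:ch-inv-lap} on the subspace of mean-zero functions in $H^1(\Gamma(t))$, together with the Poincar\'e inequality, gives
\begin{equation*}
  \norm{\G z(\cdot,t)}_{H^1(\Gamma(t))} \le c\, \norm{z(\cdot,t)}_{L^2(\Gamma(t))}.
\end{equation*}
Since $z \in H^1(\G_T) \subset L^2(\G_T) \cong L^2_{L^2}$, integrating in $t$ yields $\G z \in L^2_{H^1}$, so both $\G z$ and $\nabla_\Gamma \G z$ lie in $L^2(\G_T)$.

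Next I would construct $\md \G z$ by formally differentiating \eqref{eq:ch-inv-lap} in time. Given $\varphi \in H^1(\Gamma(t))$, extend it to $\tilde\varphi \colon \G_T \to \R$ with $\md \tilde\varphi = 0$ using \myref[Lemma]{lem:pull-back}. Applying the transport formulae \eqref{eq:96} and \eqref{eq:103} to both sides of $a(\G z, \tilde\varphi) = m(z, \tilde\varphi)$ suggests that the material derivative of $\G z$ should satisfy
\begin{equation*}
  a(\md \G z, \varphi) = m(\md z, \varphi) + g(v; z, \varphi) - b(v; \G z, \varphi) \quad \text{for all } \varphi \in H^1(\Gamma(t)),
\end{equation*}
while differentiating the constraint $\int_{\Gamma(t)} \G z \dd \sigma = 0$ by \eqref{eq:transport} forces $\int_{\Gamma(t)} \md \G z \dd \sigma = - \int_{\Gamma(t)} \G z \, \nabla_\Gamma \cdot v \dd \sigma$. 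The compatibility condition (testing the right-hand side against the constant $1$) is automatic, since $b(v;\G z, 1) = 0$ and $m(\md z, 1) + g(v; z, 1) = \dt \int_{\Gamma(t)} z \dd \sigma = 0$. Accordingly, for each $t$ define $\zeta(\cdot,t) \in H^1(\Gamma(t))$ as the unique solution of this variational problem with the prescribed mean. Testing with $\zeta$ minus its mean and applying Poincar\'e gives
\begin{equation*}
  \norm{\zeta(\cdot,t)}_{H^1(\Gamma(t))} \le c\,\big( \norm{\md z}_{L^2(\Gamma(t))} + \norm{z}_{L^2(\Gamma(t))} + \norm{\G z}_{H^1(\Gamma(t))} \big),
\end{equation*}
which, integrated in $t$ and combined with the first step and $\md z \in L^2_{L^2}$, yields $\zeta \in L^2_{H^1} \subset L^2_{L^2}$.

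The main obstacle, as usual with such constructions, is to verify that $\zeta$ really is $\md \G z$ and not merely a candidate. I would do this by testing \eqref{eq:ch-inv-lap} against $\alpha(t)\tilde\varphi$ for arbitrary $\alpha \in C_c^\infty(0,T)$ and $\tilde\varphi$ as above, integrating in $t$, and using $\md(\alpha\tilde\varphi) = \dot\alpha\tilde\varphi$ together with the transport formulae \eqref{eq:96} and \eqref{eq:103} to shift the time derivative off $\G z$. This produces the integrated identity
\begin{equation*}
  \int_0^T \alpha\, \big( a(\zeta,\tilde\varphi) + b(v;\G z,\tilde\varphi) - m(\md z,\tilde\varphi) - g(v;z,\tilde\varphi) \big) \dd t = 0
\end{equation*}
by subtracting the variational equation for $\zeta$. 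An argument in the spirit of \myref[Lemma]{lem:tiredy} (or the Fundamental Lemma applied to $\alpha$) then identifies $\zeta$ with $\md \G z$ almost everywhere in $\G_T$. Combined with the bound from the previous paragraph this gives $\md \G z \in L^2_{L^2}$, and the equivalence of norms delivers $\G z \in H^1(\G_T)$. \qed
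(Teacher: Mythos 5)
Your proposal is correct and follows essentially the same route as the paper: differentiate the defining equation \eqref{eq:ch-inv-lap} in time via the transport formulae, use the pull-back extension with vanishing material derivative to kill the $\md\xi$ terms, check the compatibility condition against constants, and read off the $H^1(\Gamma(t))$ bound on $\md\G z$ from the resulting elliptic problem before integrating in time. Your additional step of first constructing the candidate $\zeta$ and then rigorously identifying it with $\md \G z$ via test functions $\alpha(t)\tilde\varphi$ and the Fundamental Lemma is a welcome tightening of a point the paper treats formally, but it does not change the substance of the argument.
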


\begin{proof}
  It is clear that $\G z \in L^2_{H^1}$ for $z \in L^2_{H^1}$. It is left to show $\md \G z \in L^2_{L^2}$. We start by taking a time derivative of \eqref{eq:ch-inv-lap} so that for $\xi \in H^1(\G_T)$:
  \begin{equation*}
    a( \md \G z, \xi ) + a( \G z, \md \xi ) + b( v; \G z, \xi )
    = m( \md z, \xi ) + m( z, \md \xi ) + g( v; z, \xi ).
  \end{equation*}
  From \myref[Lemma]{lem:pull-back}, given $\varphi \in H^1(\Gamma(t^*))$, we can construct $\tilde\varphi \colon \G_T \to \R$, with $\tilde\varphi \in H^1(\Gamma(t))$ for all $t \in [0,T]$ and $\md \tilde\varphi = 0$. Thus, we have that
  \begin{equation*}
    a( \md \G z, \tilde\varphi ) + b( v; \G z, \tilde\varphi )
    = m( \md z, \tilde\varphi ) + g( v; z, \tilde\varphi ) \quad \mbox{ for } t \in (0,T),
  \end{equation*}
  and, in particular, at $t = t^*$,
  \begin{equation*}
    a( \md \G z, \varphi ) + b( v; \G z, \varphi )
    = m( \md z, \varphi ) + g( v; z, \varphi ).
  \end{equation*}
  Also, we have that
  \begin{equation*}
    m( \md z, 1 ) + g( v; z, 1 ) - b( v; \G z, 1 ) = \dt \int_{\Gamma(t)} z \dd \sigma = 0.
  \end{equation*}
  These calculations imply that $\md \G z$ solves the elliptic problem:
  \begin{equation*}
    a( \md \G z, \varphi ) = m( \md z, \varphi ) + g( v; z, \varphi ) - b( v; \G z, \varphi ) \quad \mbox{ for all } \varphi \in H^1(\Gamma(t^*)).
  \end{equation*}
  This implies that $\md \G z \in H^1(\Gamma(t^*))$ with the bound
  \begin{equation*}
    \norm{ \md \G z }_{H^1(\Gamma(t^*))} \le c \left( \norm{ \md z }_{L^2(\Gamma(t^*))} + \norm{ z }_{L^2(\Gamma(t^*))} + \norm{ z }_{-1} \right).
  \end{equation*}
  Integrating in time gives the desired result. \qed
\end{proof}

\begin{theorem}
  There is at most one solution to \eqref{eq:ch-weak}.
\end{theorem}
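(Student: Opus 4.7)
Suppose $(u_1,w_1)$ and $(u_2,w_2)$ are two weak solutions in the sense of \myref[Definition]{def:ch-solution} with the same initial data. Set $U := u_1-u_2$ and $W := w_1-w_2$, which satisfy $U(\cdot,0)=0$ and, for a.e. $t$,
\begin{align*}
  m(\md U,\varphi) + g(v;U,\varphi) + a(W,\varphi) & = 0, \\
  \eps\, a(U,\varphi) + \frac{1}{\eps} m\big(\psi'(u_1)-\psi'(u_2),\varphi\big) - m(W,\varphi) & = 0,
\end{align*}
for all $\varphi \in L^2_{H^1}$. The plan is to run the standard Cahn-Hilliard duality test, testing the first equation with $\mathcal{G} U$ and the second with $U$, and then to track the extra geometric terms that arise from the moving domain.

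First I would verify that $\mathcal{G} U$ is available: taking $\varphi = 1$ in the first equation and using \eqref{eq:transport} shows $\frac{d}{dt}\int_{\Gamma(t)} U \dd\sigma = 0$, so $U(\cdot,t)$ has zero mean for a.e.\ $t$. By the preceding lemma, $\mathcal{G} U \in H^1(\G_T)$, so $\mathcal{G} U$ is an admissible test function with a well-defined material derivative. Using $\varphi = \mathcal{G} U$ in the first equation and the defining identity $a(W,\mathcal{G} U) = m(U,W)$, together with $\varphi = U$ in the second equation, I obtain upon adding
\begin{equation*}
  m(\md U,\mathcal{G} U) + g(v;U,\mathcal{G} U) + \eps\, a(U,U) + \frac{1}{\eps} m\big(\psi'(u_1)-\psi'(u_2),U\big) = 0.
\end{equation*}

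The central step — and main technical obstacle — is to rewrite $m(\md U,\mathcal{G} U)$ as (a multiple of) $\frac{d}{dt}\|U\|_{-1}^2$ plus controllable geometric remainders. Differentiating the identity $a(\mathcal{G} U,\mathcal{G} U)=\|U\|_{-1}^2$ via the transport formula \eqref{eq:103} gives
\begin{equation*}
  \tfrac{d}{dt}\|U\|_{-1}^2 = 2a(\md\mathcal{G} U,\mathcal{G} U) + b(v;\mathcal{G} U,\mathcal{G} U) = 2m(U,\md\mathcal{G} U) + b(v;\mathcal{G} U,\mathcal{G} U),
\end{equation*}
while differentiating $m(\mathcal{G} U,U)$ via \eqref{eq:96} and combining with the above yields
\begin{equation*}
  m(\md U,\mathcal{G} U) = \tfrac{1}{2}\tfrac{d}{dt}\|U\|_{-1}^2 + \tfrac{1}{2} b(v;\mathcal{G} U,\mathcal{G} U) - g(v;\mathcal{G} U,U).
\end{equation*}
Since $g(v;\cdot,\cdot)$ is symmetric in its last two arguments, the $g$-terms cancel in the summed identity, leaving
\begin{equation*}
  \tfrac{1}{2}\tfrac{d}{dt}\|U\|_{-1}^2 + \eps\, a(U,U) + \tfrac{1}{\eps} m\big(\psi'(u_1)-\psi'(u_2),U\big) + \tfrac{1}{2} b(v;\mathcal{G} U,\mathcal{G} U) = 0.
\end{equation*}

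The remaining estimates are routine. For the nonlinearity, $\psi'(z)=z^3-z$ gives $m(\psi'(u_1)-\psi'(u_2),U) = m((u_1^2+u_1u_2+u_2^2)U,U) - m(U,U) \ge -\|U\|_{L^2(\Gamma(t))}^2$. The interpolation inequality
\begin{equation*}
  \|U\|_{L^2(\Gamma(t))}^2 = a(\mathcal{G} U,U) \le \|U\|_{-1}\,\|\nabla_\Gamma U\|_{L^2(\Gamma(t))}
\end{equation*}
combined with Young's inequality absorbs the negative $L^2$ contribution into $\eps\, a(U,U)$ at the cost of a $c_\eps\|U\|_{-1}^2$ term. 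The geometric tensor $\B(v)$ is bounded by our assumption $v\in C^2(\N_T)$, so $|b(v;\mathcal{G} U,\mathcal{G} U)|\le c\|U\|_{-1}^2$. Collecting terms gives
\begin{equation*}
  \tfrac{d}{dt}\|U\|_{-1}^2 + \eps\, a(U,U) \le c_\eps \|U\|_{-1}^2,
\end{equation*}
and Gronwall's lemma together with $\|U(\cdot,0)\|_{-1}=0$ forces $U\equiv 0$. Finally, plugging $U=0$ into the second difference equation and testing with $\varphi=W$ gives $m(W,W)=0$, hence $W\equiv 0$, completing the uniqueness proof. \qed
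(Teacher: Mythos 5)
Your proposal is correct and follows essentially the same route as the paper: conservation of the mean of $U$, the duality test with $\mathcal{G}U$ and $U$, monotonicity of $z\mapsto z^3$ plus the interpolation $\|U\|_{L^2}^2\le\|U\|_{-1}\|\nabla_\Gamma U\|_{L^2}$ to absorb the $-z$ part, control of the transport/geometric remainders by $c\|U\|_{-1}^2$, and Gronwall with Poincar\'e. The only cosmetic difference is that you assemble the identity for $m(\md U,\mathcal{G}U)$ up front so the $g$-terms cancel explicitly, whereas the paper applies \eqref{eq:96} first and treats $m(\eta^u,\md\mathcal{G}\eta^u)$ separately; the resulting computation is identical.
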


\begin{proof}
  We suppose that $(u_1, w_1)$ and $(u_2, w_2)$ are solutions to \eqref{eq:ch-weak}. We will write $\eta^u = u_1 - u_2$ and $\eta^w = w_1 - w_2$. For $\varphi \in L^2_{H^1}$, we know that
  \begin{subequations}
    \begin{align}
      \label{eq:ch-unique-u}
      m( \md \eta^u, \varphi ) + g( v; \eta^u, \varphi ) + a( \eta^w, \varphi ) & = 0 \\
      \label{eq:ch-unique-w}
      \eps a( \eta^u, \varphi ) + \frac1\eps( \psi'(u_1) - \psi'(u_2), \varphi ) - m( \eta^w, \varphi ) & = 0.
    \end{align}
  \end{subequations}
  Testing \eqref{eq:ch-unique-u} with $\varphi = 1$ tells us that
  \begin{equation*}
    \int_{\Gamma(t)} \eta^u \dd \sigma = \int_{\Gamma_0} \eta^u \dd \sigma = 0,
  \end{equation*}
  Hence, since $\G \eta^u$ is well defined and $\G \eta^u \in H^1(\G_T)$, we may test the first equation with $\G \eta^u$, and apply \eqref{eq:96}, to obtain
  \begin{equation}
    \label{eq:ch-unique-2}
    \dt \norm{ \eta^u }_{-1}^2 + m( \eta^w, \eta^u ) = m( \eta^u, \md \G \eta^u ).
  \end{equation}
  Next, using the monotonicity of $z \mapsto z^3$, testing the second equation with $\eta^u$ gives
  \begin{equation}
    \label{eq:ch-unique-1}
    \eps a ( \eta^u, \eta^u ) - \frac1\eps m( \eta^u, \eta^u ) \le m( \eta^w, \eta^u ).
  \end{equation}
  Taking the sum of \eqref{eq:ch-unique-1} and \eqref{eq:ch-unique-2}, we obtain
  \begin{equation*}
    \dt \norm{ \eta^u }_{-1}^2 + \eps \norm{ \nabla_\Gamma \eta^u }_{L^2(\Gamma(t))}^2
    \le \frac1\eps m( \eta^u, \eta^u ) + m( \eta^u, \md \G \eta^u ).
  \end{equation*}
  For the first term on the right-hand side, we see that
  \begin{equation*}
    \frac{1}{\eps} m( \eta^u, \eta^u ) = \frac{1}{\eps} a( \eta^u, \G \eta^u )
    \le \frac{\eps}{2} \norm{ \nabla_\Gamma \eta^u }_{L^2(\Gamma(t))}^2 + c_\eps \norm{ \eta^u }_{-1}^2,
  \end{equation*}
  and for the second, we have
  \begin{align*}
    m( \eta^u, \md \G \eta^u ) = a( \G \eta^u, \md \G \eta^u )
    & = \frac{1}{2} \dt a( \G \eta^u, \G \eta^u ) - \frac{1}{2} b( v; \G \eta^u, \G \eta^u ) \\
    & \le \frac{1}{2} \dt \norm{ \eta^u }_{-1}^2 + c \norm{ \eta^u }_{-1}^2.
  \end{align*}
  Combining these terms, we obtain the estimate
  \begin{equation*}
    \dt \norm{ \eta^u }_{-1}^2 + \eps \norm{ \nabla_\Gamma \eta^u }_{L^2(\Gamma(t))}^2
    \le c_\eps \norm{ \eta^u }_{-1}^2.
  \end{equation*}
  We next use a Gronwall inequality and integration in time to see
  \begin{equation*}
    \sup_{t \in (0,T)} \norm{ \eta^u }_{-1}^2 
    + \eps \int_0^T \norm{ \nabla_\Gamma \eta^u }_{L^2(\Gamma(t))}^2
    \le c_\eps \norm{ \eta^u |_{t=0} }_{-1}^2 = 0.
  \end{equation*}
  Since $\int_{\Gamma(t)} \eta^u \dd \sigma = 0$, we apply a Poincar\'e inequality to arrive at
  \begin{equation*}
    \int_0^T \norm{ \eta^u }_{L^2(\Gamma(t))}^2 \dd t \le \int_0^T \norm{ \nabla_\Gamma \eta^u }_{L^2(\Gamma(t))}^2 \dd t = 0.
  \end{equation*}
  This shows that $u_1 = u_2$.
  
  Now, we know that $\eta^u = 0$ and thus testing \eqref{eq:ch-unique-u} with $\eta^w$ gives
  \begin{equation*}
    m( \eta^w, \eta^w ) = \eps a( \eta^u, \eta^w ) + \frac{1}{\eps} m( \psi'(u_1) - \psi'(u_2), \eta^w ) = 0.
  \end{equation*}
  This shows that $w_1 = w_2$. \qed
\end{proof}

\subsection{Regularity}
\label{sec:ch-reg}

In this section, we show that the solution enjoys $H^2$ regularity.

\begin{theorem}
  [Regularity]
  Let $u_0 \in H^2(\Gamma_0)$ and $(u,w)$ be the solution pair of \eqref{eq:ch-weak}, then $u \in L^\infty_{H^2}$ and $w \in L^2_{H^2}$, with the bounds
  \begin{equation}
    \eps \sup_{t \in (0,T)} \norm{ u }_{H^2(\Gamma(t))}^2 + \int_0^T \norm{ w }_{H^2(\Gamma(t))}^2 \le C_2(u_0).
  \end{equation}
\end{theorem}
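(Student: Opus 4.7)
The plan is to extract two elliptic identities from the weak formulation and apply standard Laplace–Beltrami regularity time-slicewise. From \eqref{eq:ch-weak-u}, taking $\varphi \in H^1(\Gamma(t))$ gives, for a.e.\ $t$, the weak equation
\begin{equation*}
  a(u,\varphi) = \tfrac{1}{\eps} m\!\left( w - \tfrac{1}{\eps}\psi'(u),\, \varphi \right),
\end{equation*}
which is the weak Laplace–Beltrami equation $-\eps\Delta_\Gamma u = w - \tfrac{1}{\eps}\psi'(u)$ on $\Gamma(t)$. Similarly, \eqref{eq:ch-weak-w} yields $-\Delta_\Gamma w = \md u + u\,\nabla_\Gamma\cdot v$. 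Once I show that the right-hand sides are controlled in $L^\infty(0,T;L^2(\Gamma(t)))$ and $L^2(0,T;L^2(\Gamma(t)))$ respectively, elliptic regularity of the form $\norm{z}_{H^2(\Gamma(t))} \le c\big(\norm{\Delta_\Gamma z}_{L^2(\Gamma(t))} + \norm{z}_{L^2(\Gamma(t))}\big)$ on the closed smooth surface $\Gamma(t)$ (whose constant is uniform in $t$ by the smoothness assumptions on the evolution) delivers the two desired bounds.

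The first preliminary step is to promote the improved discrete bound from \myref[Lemma]{lem:ch-fem-bound-improved} to the continuous limit. Since $C_2(u_0)$ is independent of $h$, the lifts $w_h$ are uniformly bounded in $L^\infty(0,T;L^2(\Gamma(t)))$ and $\md u_h$ in $L^2(0,T;L^2(\Gamma(t)))$. By weak-$\ast$ (resp.\ weak) lower semicontinuity under the convergences established in \myref[Theorem]{thm:ch-wp}, the limit $(\bar u,\bar w)=(u,w)$ inherits
\begin{equation*}
  \esssup_{t\in(0,T)} \norm{w}_{L^2(\Gamma(t))}^2 + \int_0^T \norm{\md u}_{L^2(\Gamma(t))}^2 \dd t \le C_2(u_0).
\end{equation*}

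For the first regularity claim, bound the RHS of $-\eps\Delta_\Gamma u = w - \tfrac{1}{\eps}\psi'(u)$: from \myref[Lemma]{lem:sobolev-embed}, $H^1(\Gamma(t))\hookrightarrow L^6(\Gamma(t))$ in all dimensions $n\le 3$, so
\begin{equation*}
  \norm{\psi'(u)}_{L^2(\Gamma(t))} \le c\big(\norm{u}_{H^1(\Gamma(t))}^3 + \norm{u}_{H^1(\Gamma(t))}\big),
\end{equation*}
which lies in $L^\infty(0,T)$ by \myref[Theorem]{thm:ch-wp}. Combined with $w\in L^\infty_{L^2}$, elliptic regularity and the Poincaré-type control of $u$ via its conserved mean give
\begin{equation*}
  \eps\sup_{t} \norm{u}_{H^2(\Gamma(t))}^2 \le c_\eps C_2(u_0).
\end{equation*}
For the second claim, $-\Delta_\Gamma w = \md u + u\,\nabla_\Gamma\cdot v$ has right-hand side bounded in $L^2_{L^2}$ since $\md u\in L^2_{L^2}$ and $\norm{u\,\nabla_\Gamma\cdot v}_{L^2(\Gamma(t))} \le c\norm{v}_{C^1(\N_T)}\norm{u}_{H^1(\Gamma(t))}$ is in $L^\infty(0,T)$; combined with $w\in L^2_{H^1}\subset L^2_{L^2}$, elliptic regularity yields $\int_0^T\norm{w}_{H^2(\Gamma(t))}^2\dd t \le c C_2(u_0)$.

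The only nonroutine point is the passage to the limit for the improved discrete bound: one must check that both $\md u_h \weakto \md u$ in $L^2_{L^2}$ (already noted after \eqref{eq:ch-weak-conv1}) and $w_h \weakto w$ in $L^2_{L^2}$ are compatible with a weak-$\ast$ extraction that preserves the $L^\infty_{L^2}$ bound on $w$. Since the constant $C_2(u_0)$ is $h$-independent, a standard Banach–Alaoglu argument on $L^\infty(0,T;L^2(\Gamma(t)))$ (via the equivalent space-time description from \myref[Proposition]{prop:compact-embed}) provides the weak-$\ast$ limit, whose uniqueness forces it to coincide with $w$ already identified; the bound then passes by lower semicontinuity. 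Once this is in place, the two elliptic arguments are routine and combine to give exactly the stated estimate.
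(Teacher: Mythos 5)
Your proposal is correct and follows essentially the same route as the paper: pass the improved discrete bounds of \myref[Lemma]{lem:ch-fem-bound-improved} to the limit to get $w \in L^\infty_{L^2}$ and $\md u \in L^2_{L^2}$, read off the two elliptic equations $-\eps\Delta_\Gamma u = w - \tfrac1\eps\psi'(u)$ and $-\Delta_\Gamma w = \md u + u\,\nabla_\Gamma\cdot v$ at almost every time, and invoke standard Laplace--Beltrami regularity slicewise. The only difference is that you spell out the weak-$\ast$ lower-semicontinuity step for the $L^\infty_{L^2}$ bound on $w$, which the paper leaves implicit.
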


\begin{proof}
  Using the improved estimates from \myref[Lemma]{lem:ch-fem-bound-improved}, we have that
  \begin{equation}
    \label{eq:ch-reg-1}
    \begin{aligned}
      & \eps \int_0^T \norm{ \md u }_{L^2(\Gamma(t))}^2 + \sup_{t \in (0,T)} \norm{ w }_{L^2(\Gamma(t))}^2 \le C_2(u_0).
    \end{aligned}
  \end{equation}
  Now, we can translate the fact that $(u,w)$ are solutions of \eqref{eq:ch-weak} into
  \begin{align*}
    \eps a( u, \varphi ) & = m( f_1, \varphi ) \\
    a( w, \varphi ) & = m( f_2, \varphi ) \quad \mbox{ for all } \varphi \in H^1(\Gamma(t)),
  \end{align*}
  for $f_1 = w - \frac{1}{\eps} \psi'(u)$ and $f_2 = \md u + u \nabla_{\Gamma} \cdot v$. Notice that 
  \begin{equation*}
    \int_{\Gamma(t)} f_1 \dd \sigma = \int_{\Gamma(t)} f_2 \dd \sigma = 0.
  \end{equation*}
  The above improved bounds combined with the bounds in \myref[Theorem]{thm:ch-wp} gives $f_1 \in L^\infty_{L^2}$ and $f_2 \in L^2_{L^2}$. Standard theory of elliptic partial differential equations \cite{Aub82} gives $u \in L^\infty_{H^2}$ and $w \in L^2_{H^2}$. The proof is completed by using the bounds in \eqref{eq:ch-wp-bound} and \eqref{eq:ch-reg-1} on $f_1$ and $f_2$. \qed
\end{proof}

%--- analysis of the finite element scheme ----------------------------------%

\section{Error analysis of finite element scheme}

In this section, we show an error bound for the surface finite element method described in \myref[Section]{sec:finite-elem-appr}. The proof relies on decomposing the errors into errors between the smooth solution and Ritz projection and between the Ritz projection and discrete solution. In contrast to previous studies of partial differential equations on surfaces \cite{Dzi88,DziEll07,DziEll13}, we show an error bound on $\Gamma_h(t)$ instead of $\Gamma(t)$. This allows an easier treatment of the non-linear terms.

We will assume that $u_0, u$ and $w$ are bounded in the following norms
\begin{equation}
  \label{eq:ch-error-assump}
  \norm{u_0}_{H^2(\Gamma_0)}^2 + \sup_{t \in (0,T)} \norm{ u }_{H^2(\Gamma(t))}^2 + \int_0^T \norm{ w }_{H^2(\Gamma(t))}^2 + \norm{ \md u }_{H^2(\Gamma(t))}^2 \dd t < + \infty.
\end{equation}
\myref[Section]{sec:ch-reg} shows how to bound some of these terms. Again, we will assume that the initial condition of the finite element scheme is given by the Ritz projection:
\begin{equation}
  \label{eq:115}
  U_{h,0} = \Pi_h u_0.
\end{equation}

The error bound we will show is stated as follows:

\begin{theorem}
  \label{thm:ch-error}
  Let $u,w$ solve \eqref{eq:ch} and satisfy \eqref{eq:ch-error-assump}. Let $U_h, W_h$ solve \eqref{eq:ch-fem} with initial condition \eqref{eq:115}. We have that
  \begin{equation}
    \label{eq:3}
    \eps \sup_{t \in (0,T)} \norm{ u^{-\ell} - U_h }_{L^2(\Gamma_h(t))}^2
    + \int_0^T \norm{ w^{-\ell} - W_h }_{L^2(\Gamma_h(t))}^2
    \le C h^4,
  \end{equation}
  and
  \begin{equation}
    \label{eq:1}
    \eps \sup_{t \in (0,T)} \norm{ \nabla_{\Gamma_h} ( u^{-\ell} - U_h ) }_{L^2(\Gamma_h(t))}^2
    + \int_0^T \norm{ \nabla_{\Gamma_h} ( w^{-\ell} - W_h ) }_{L^2(\Gamma_h(t))}^2
    \le C h^2,
  \end{equation}
  with
  $C$ given by
  \begin{align*}
    C & =  c_\eps \sup_{t \in [0,T)} \norm{ u }_{H^2(\Gamma(t))}^2 + c_\eps \int_0^T \big(  \norm{ \md u }_{H^2(\Gamma(t))}^2 + \norm{ w }_{H^2(\Gamma(t))}^2 \big) \dd t.
  \end{align*} 
\end{theorem}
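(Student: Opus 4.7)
The plan is to prove the theorem by splitting the error through the Ritz projection and carrying out an energy estimate on the resulting discrete error system, with the cubic nonlinearity handled by a time-integration manoeuvre and an $L^\infty$ bootstrap.

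\medskip
\noindent\textbf{Step 1 (splitting).} Write $u^{-\ell} - U_h = \rho_u + \theta_u$ with $\rho_u := u^{-\ell} - \Pi_h u$ and $\theta_u := \Pi_h u - U_h \in S_h(t)$, and analogously for $w$. The Ritz parts $\rho_u,\rho_w$ as well as $\md_h \rho_u$ are already $O(h^2)$ in $L^2$ and $O(h)$ in $H^1$ by Theorem~\ref{thm:Pih-bound} and Lemma~\ref{lem:md-Pih-bound}, so it remains to bound $\theta_u,\theta_w$.

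\medskip
\noindent\textbf{Step 2 (error equations).} Testing the continuous weak form \eqref{eq:ch-weak} with the lift $\phi_h^\ell$ of $\phi_h \in S_h(t)$, subtracting \eqref{eq:ch-fem}, and using the defining property of $\Pi_h$ to annihilate the leading stiffness term, I obtain
\begin{align*}
  m_h(\md_h \theta_u,\phi_h) + g_h(V_h;\theta_u,\phi_h) + a_h(\theta_w,\phi_h) &= R_1(\phi_h), \\
  \eps\, a_h(\theta_u,\phi_h) - m_h(\theta_w,\phi_h) + \tfrac{1}{\eps} m_h\bigl(\psi'(U_h)-\psi'(\Pi_h u),\phi_h\bigr) &= R_2(\phi_h),
\end{align*}
where $R_1, R_2$ collect the geometric perturbation defects controlled by Lemma~\ref{lem:ch-geometric-bounds}, \eqref{eq:mmd-bound}, \eqref{eq:amd-bound}, and \eqref{eq:ch-mpsi-bound}, together with Ritz defects on $u, \md u, w$; each is $O(h^2)$ in suitable norms of $\phi_h$ and $\md_h \phi_h$.

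\medskip
\noindent\textbf{Step 3 (energy identity).} Test the first error equation with $\theta_w$ and the second with $\md_h \theta_u$, then add; the mixed $\pm m_h(\md_h\theta_u,\theta_w)$ terms cancel. Applying the transport formula \eqref{eq:ah-transport} to exchange $a_h(\theta_u,\md_h\theta_u)$ for $\tfrac12 \tfrac{d}{dt} a_h(\theta_u,\theta_u)$ modulo a $b_h$ term yields
\begin{align*}
  \frac{\eps}{2} \frac{d}{dt} a_h(\theta_u,\theta_u) + a_h(\theta_w,\theta_w)
  &= \frac{\eps}{2} b_h(V_h;\theta_u,\theta_u) - g_h(V_h;\theta_u,\theta_w) \\
  &\quad - \frac{1}{\eps} m_h\bigl(\psi'(U_h)-\psi'(\Pi_h u), \md_h\theta_u\bigr) + R_1(\theta_w) + R_2(\md_h\theta_u).
\end{align*}

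\medskip
\noindent\textbf{Step 4 (main obstacle: the nonlinear term).} The critical difficulty is the appearance of $\md_h\theta_u$ on the right-hand side paired with the cubic nonlinearity, since no useful bound on $\md_h\theta_u$ in $L^2$ is available from the LHS of the energy identity. Using $\psi'(z) = z^3 - z$, write
\begin{equation*}
  \psi'(U_h) - \psi'(\Pi_h u) = -F\,\theta_u, \qquad F := U_h^2 + U_h\,\Pi_h u + (\Pi_h u)^2 - 1,
\end{equation*}
and integrate in time, using the discrete transport identity \eqref{eq:mh-transport} applied to $m_h(F\theta_u,\theta_u)$, to replace $\int_0^t m_h(F\theta_u,\md_h\theta_u)\,ds$ by the single boundary value $\tfrac12 m_h(F\theta_u,\theta_u)(t)$ (the $t=0$ contribution vanishes because $\theta_u(\cdot,0)=0$) plus integrals involving only $\theta_u$, $\md_h F$, and $\nabla_{\Gamma_h}\cdot V_h$. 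The same idea applied to the $R_2(\md_h\theta_u)$ piece (whose integrand is $\md_h$ paired with a known smooth function) removes the $\md_h\theta_u$ in that remainder as well.

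Bounding $\|F\|_{L^\infty}$ requires $\|U_h\|_{L^\infty}$: $\Pi_h u$ is in $L^\infty$ by the Corollary to Theorem~\ref{thm:Pih-bound}, while $U_h$ is handled by a bootstrap. Assume $\|\theta_u\|_{L^\infty(\Gamma_h(t))}\le 1$ on a maximal subinterval $[0,t^\ast]\subset[0,T]$; close the energy estimate to obtain $\|\nabla_{\Gamma_h}\theta_u\|_{L^\infty(L^2)} \le Ch^2$; an inverse inequality for quasi-uniform meshes combined with the Sobolev embedding of Lemma~\ref{lem:sobolev-embedh} then yields $\|\theta_u\|_{L^\infty(\Gamma_h(t))} \le c h^{\alpha}$ with $\alpha>0$ in dimensions $n=1,2,3$, so the bootstrap closes for $h$ sufficiently small and $t^\ast = T$.

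\medskip
\noindent\textbf{Step 5 (Gronwall and $L^2$-bounds).} After bounding the right-hand side with Young's inequality, Step~4 for the nonlinear term, Lemma~\ref{lem:vh-bound} for the geometric quantities $b_h$ and $\nabla_{\Gamma_h}\cdot V_h$, and the $O(h^2)$ estimates on $R_1,R_2$, I arrive at
\begin{equation*}
  \frac{d}{dt}\bigl(\eps\, a_h(\theta_u,\theta_u)\bigr) + a_h(\theta_w,\theta_w) \le C\,\eps\, a_h(\theta_u,\theta_u) + C h^4,
\end{equation*}
and Gronwall gives $\sup_t \eps\, a_h(\theta_u,\theta_u) + \int_0^T a_h(\theta_w,\theta_w)\,dt \le Ch^4$. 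This is the $H^1$-estimate \eqref{eq:1} for the discrete parts. For the $L^2$-bound on $\theta_u$, mass conservation together with the Ritz identity $\int_{\Gamma_h(t)} \Pi_h u \dd\sigma_h = \int_{\Gamma(t)} u \dd\sigma$ and continuous mass conservation give $\int_{\Gamma_h(t)} \theta_u \dd\sigma_h = 0$, so Poincar\'e yields $\|\theta_u\|_{L^2(\Gamma_h(t))} \le Ch^2$. For $\theta_w$, testing the second error equation with $\phi_h = 1$ shows its mean is $O(h^2)$ pointwise in $t$, so a Poincar\'e--Wirtinger inequality produces $\int_0^T \|\theta_w\|_{L^2(\Gamma_h(t))}^2\,dt \le Ch^4$. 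A triangle inequality with the Ritz estimates of Step~1 then delivers \eqref{eq:3} and completes \eqref{eq:1}.

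\medskip
The hardest part is Step~4: the interaction of the cubic nonlinearity with the material derivative of the discrete error forces both an integration-in-time trick and an $L^\infty$ bootstrap; in contrast, the geometric perturbation terms are routine given the battery of estimates collected in Section~\ref{sec:geometric-estimates}.
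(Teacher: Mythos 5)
Your Steps 1 and 2 coincide with the paper's argument, and you are right that a uniform $L^\infty$ bound on $U_h$ is needed to globalise the Lipschitz property of $\psi'$ (the paper obtains it a priori in Theorem~\ref{thm:ptwise-bound} from the improved energy estimates and the $L^\infty$-stability of $\Pi_h$, rather than by a bootstrap; that difference is minor). The genuine problem is the choice of test functions in Step 3. The paper tests the first error equation with $\eps\theta^u$ and the second with $\theta^w$: the stiffness cross-terms cancel, the left-hand side becomes $\eps\dt m_h(\theta^u,\theta^u)+m_h(\theta^w,\theta^w)$, and the nonlinearity lands on $\theta^w$, where $\frac1\eps\abs{m_h(\psi'(\Pi_h u)-\psi'(U_h),\theta^w)}\le \frac{c}{\eps}\norm{\theta^u}_{L^2}\norm{\theta^w}_{L^2}$ is absorbed by Young's inequality into $m_h(\theta^w,\theta^w)$; a further test of the second equation with $\theta^u$ controls the $\nabla_{\Gamma_h}\theta^u$ term coming from $E_3$, and the $H^1$ bound \eqref{eq:1} is then obtained afterwards by an inverse inequality. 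Your pairing $(\theta_w,\md_h\theta_u)$ puts the cubic term against $\md_h\theta_u$, and the repair you propose does not close.

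Concretely, integrating $\frac1\eps\int_0^t m_h(F\theta_u,\md_h\theta_u)\dd s$ by parts in time via \eqref{eq:mh-transport} does not remove the difficulty; it converts it into the endpoint term $\frac{1}{2\eps}m_h(F\theta_u,\theta_u)(t)$, which is of size $\frac{c}{\eps}\norm{\theta_u(t)}_{L^2(\Gamma_h(t))}^2$ with $c$ depending on $\norm{U_h}_{L^\infty}+\norm{\Pi_h u}_{L^\infty}$ but carrying no smallness. The only quantity your inequality controls at the top time $t$ is $\frac{\eps}{2}a_h(\theta_u,\theta_u)(t)$, so absorption would require $\norm{\theta_u}_{L^2}^2\le\delta\,\eps^2\norm{\nabla_{\Gamma_h}\theta_u}_{L^2}^2$ with $\delta$ small, which Poincar\'e does not provide for fixed $\eps$; Gronwall is of no help for a term evaluated at the endpoint, and splitting $F=G-1$ with $G\ge 0$ leaves the monotone part with the wrong sign once it is moved across. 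Your $L^\infty$ bootstrap does not rescue this either, since the obstruction is a fixed-constant absorption failure, not a lack of smallness in $h$. A second, independent gap is $R_2(\md_h\theta_u)$: the defect terms of the type $m_h(\Pi_h w,\phi_h)-m(w,\varphi_h)$ tested with $\phi_h=\md_h\theta_u$ can only be integrated by parts in time at the cost of a bound on $\md_h\Pi_h w$, i.e.\ time regularity of $w$ that is not part of hypothesis \eqref{eq:ch-error-assump}. Switching to the paper's pairing removes both problems simultaneously, and your Step 5 observations on the means of $\theta_u$ and $\theta_w$ then become exactly the Poincar\'e ingredients the paper uses.
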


\subsection{Pointwise bound on the discrete solution}

In the following error analysis, a pointwise bound on the discrete solution uniformly in space and time will be extremely useful. This will allow us to convert the local Lipschitz property of $\psi$ and $\psi'$ into global results.

\begin{theorem}
  \label{thm:ptwise-bound}
  The discrete solution $U_h$ is bounded uniformly in space and time, independently of $h$, and we have the bound
  \begin{equation}
    \label{eq:107}
    \sup_{t \in (0,T)} \norm{ U_h }_{L^\infty(\Gamma_h(t))}^2 \le C_2(u_0).
  \end{equation}
\end{theorem}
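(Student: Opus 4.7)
The plan is to combine the energy estimates already proved with Sobolev embeddings and elliptic regularity on the smooth surface, then transfer back to $\Gamma_h(t)$ via the $L^\infty$ stability of the Ritz projection. From \myref[Theorem]{thm:ch-fem-wp} together with \myref[Lemma]{lem:ch-fem-bound-improved}, and using the present assumption $U_{h,0} = \Pi_h u_0$ with $u_0 \in H^2(\Gamma_0)$, we have the uniform bounds
\begin{equation*}
  \sup_{t \in (0,T)} \bigl( \norm{U_h}_{H^1(\Gamma_h(t))} + \norm{W_h}_{L^2(\Gamma_h(t))} \bigr) \le C_\eps.
\end{equation*}
Lifting via \myref[Lemma]{lem:ch-norm-equiv} and invoking the Sobolev embedding $H^1(\Gamma(t)) \hookrightarrow L^6(\Gamma(t))$ from \myref[Lemma]{lem:sobolev-embed} (available precisely because $n \le 3$) yields $\norm{\psi'(u_h)}_{L^2(\Gamma(t))} \le C_\eps$ uniformly in $h$ and $t$.

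Next, I would introduce an auxiliary continuous problem. At each time $t$, let $\hat z(\cdot,t) \in H^1(\Gamma(t))$ be the unique mean-adjusted solution of
\begin{equation*}
  \eps\, a(\hat z, \varphi) = m\bigl( w_h - \tfrac1\eps \psi'(u_h) - c(t), \varphi \bigr) \quad \mbox{ for all } \varphi \in H^1(\Gamma(t)),
\end{equation*}
with $c(t)$ chosen to enforce compatibility and with $\int_{\Gamma(t)} \hat z \dd \sigma = \int_{\Gamma(t)} u_h \dd \sigma$. Since the right-hand side is bounded in $L^2(\Gamma(t))$ uniformly, standard elliptic regularity for the Laplace--Beltrami operator on a smooth closed surface gives $\hat z \in H^2(\Gamma(t))$ with $\norm{\hat z}_{H^2(\Gamma(t))} \le C_\eps$. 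The corollary to \myref[Theorem]{thm:Pih-bound} then produces the bound $\norm{\Pi_h \hat z}_{L^\infty(\Gamma_h(t))} \le c\norm{\hat z}_{H^2(\Gamma(t))} \le C_\eps$.

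Finally, I would compare $U_h$ to $\Pi_h \hat z$. The finite element equation \eqref{eq:ch-fem-u} and the defining equation for $\hat z$ (combined via the Ritz projection identity \eqref{eq:102}) show that $e_h := U_h - \Pi_h \hat z \in S_h(t)$ satisfies a discrete elliptic problem whose residual consists solely of geometric perturbation terms of order $h^2$ (controlled by \myref[Lemma]{lem:ch-geometric-bounds} and \eqref{eq:ch-mpsi-bound}). Testing with $e_h$ and using mass conservation to apply a Poincar\'e inequality yields a smallness estimate on $\norm{e_h}_{H^1(\Gamma_h(t))}$; the inverse inequality on the quasi-uniform mesh then converts this into an $L^\infty$ bound on $e_h$. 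Combining with the bound on $\norm{\Pi_h \hat z}_{L^\infty}$ gives the desired estimate. The main obstacle is the apparent circularity of needing an $L^\infty$ control on $U_h$ to linearise the cubic term $\psi'(U_h)$; this is circumvented by working with $\psi'(u_h)$ rather than $\psi'(U_h)$ in the auxiliary problem, so that only the $L^2$ bound on $u_h^3$ coming from the $L^6$ embedding is required, and by exploiting that the dimensional constraint $n \le 3$ simultaneously unlocks $H^1 \hookrightarrow L^6$ and $H^2 \hookrightarrow L^\infty$.
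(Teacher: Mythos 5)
Your proposal is correct and follows essentially the same route as the paper: an auxiliary elliptic problem on $\Gamma(t)$ with data $W_h - \frac1\eps \psi'(U_h)$, elliptic regularity, and the $L^\infty$-stability of the Ritz projection, with the $L^6$ embedding (valid since $n \le 3$) supplying the uniform $L^2$ bound on the cubic term. The only difference is that the paper normalises the data as $\tilde{F}_h = F_h^\ell/\mu_h^\ell$ rather than subtracting a constant $c(t)$, which makes $U_h$ \emph{exactly} the Ritz projection of the auxiliary solution and so dispenses with your final comparison step (residual estimate, Poincar\'e, inverse inequality) --- though that step does go through, yielding an extra $O(h^{2-n/2})$ perturbation that is harmless for $n \le 3$.
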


\begin{proof}
  Let $F_h = W_h - \frac1\eps \psi'(U_h)$, then $F_h \in L^\infty(0,T; L^2(\Gamma_h(t)))$ with the estimate
  \begin{equation}
    \label{eq:114}
    \sup_{t \in (0,T)} \norm{ F_h }_{L^2(\Gamma_h(t))}^2 \le C_2(u_0).
  \end{equation}
  This follows immediately from \myref[Theorem]{thm:ch-fem-wp} and \myref[Lemma]{lem:ch-fem-bound-improved} combined with a Sobolev inequality (\myref[Lemma]{lem:sobolev-embedh}) and \eqref{eq:smoothu0-bound}.
  Furthermore, since $\phi_h = 1$ is an admissible test function in \eqref{eq:ch-fem-u}, the mean value of $F_h$ is zero:
  \begin{equation}
    \label{eq:62}
    \int_{\Gamma_h(t)} F_h \dd \sigma_h = 0.
  \end{equation}  
  
  We define $\tilde{F}_h = F_h^\ell / \mu_h^\ell$, so that
  \begin{equation*}
    \int_{\Gamma(t)} \tilde{F}_h \dd \sigma = \int_{\Gamma(t)} F_h^\ell \frac{1}{\mu_h^\ell} \dd \sigma = \int_{\Gamma_h(t)} F_h \dd \sigma_h = 0.
  \end{equation*}
  Let $\bar{u} \colon \G_T \to \R$ solve
  \begin{equation*}
    - \eps \Delta_\Gamma \bar{u} = \tilde{F}_h \quad \mbox{ on } \Gamma(t), \mbox{ and } \int_{\Gamma(t)} \bar{u} \dd \sigma = \int_{\Gamma_h(t)} U_h \dd \sigma_h \mbox{ for each } t \in (0,T).
  \end{equation*}
  Then it is clear that $\Pi_h \bar{u} = U_h$. Standard elliptic theory \cite{Aub82} and the $L^\infty$ bound on $\Pi_h$ \eqref{eq:Pih-infty} gives that
  \begin{align*}
    \norm{ U_h }_{L^\infty(\Gamma_h(t))} & = \norm{ \Pi_h \bar{u} }_{L^\infty(\Gamma_h(t))} \le c \norm{ \bar{u} }_{H^2(\Gamma(t))} \\ 
    & \le c \norm{ \tilde{F}_h }_{L^2(\Gamma(t))} \le c \norm{ F_h }_{L^2(\Gamma_h(t))}.
  \end{align*}
  We apply this inequality uniformly in time, with \eqref{eq:114}, to give the desired estimate. \qed
\end{proof}

\subsection{Splitting the error}

We split the error into two parts using the Ritz projection $\Pi_h$ from \myref[Section]{sec:ritz-projection}:
\begin{align*}
  u^{-\ell} - U_h & = ( u^{-\ell} - \Pi_h u ) + ( \Pi_h u - U_h ) = \rho^u + \theta^u \\
  w^{-\ell} - W_h & = ( w^{-\ell} - \Pi_h w ) + ( \Pi_h w - W_h ) = \rho^w + \theta^w.
\end{align*}
We note that from \myref[Theorem]{thm:Pih-bound}, we already have estimates for $\rho^u$ and $\rho^w$ and it is left to bound $\theta^u$ and $\theta^w$. Notice that, the assumptions in \eqref{eq:ch-error-assump} imply that $\theta^u \in S_h^T$ and  $\theta^w \in \tilde{S}_h^T$.

To derive equations for $\theta^u$ and $\theta^w$, we start by rewriting \eqref{eq:ch-fem-w} using the definition of $\Pi_h$ and \eqref{eq:ch-w} to obtain for $\phi_h \in S_h^T$ with lift $\varphi_h \in S_h^{\ell,T}$ that
\begin{equation}
  \label{eq:theta-u}
  \begin{aligned}
    & \dt m_h( \theta^u, \phi_h ) + a_h( \theta^w, \phi_h ) - m_h( \theta^u, \md_h \phi_h) \\
    & \quad = \big( m_h( \md_h \Pi_h u, \phi_h ) - m( \md_h u, \varphi_h ) \big)
    + \big( g_h( V_h; \Pi_h u, \phi_h ) - g( v_h; u, \varphi_h ) \big) \\
    & \quad\qquad + m( u, \md \varphi_h - \md_h \varphi_h ) \\
    & \quad =: E_1( \phi_h ) + E_2( \phi_h ) + E_3( \phi_h ).
  \end{aligned}
\end{equation}
Next, we rewrite \eqref{eq:ch-fem-u} using \eqref{eq:ch-u} this time to see for $\phi_h \in \tilde{S}_h^T$ with lift $\varphi_h \in \tilde{S}_h^{\ell,T}$ that
\begin{equation}
  \label{eq:theta-w}
  \begin{aligned}
    & \eps a_h( \theta^u, \phi_h ) + \frac1\eps m_h( \psi'( \Pi_h u) - \psi'(U_h), \phi_h )
    - m_h( \theta^w, \phi_h ) \\
    & \quad = \frac1\eps \big( m_h( \psi'(\Pi_h u), \phi_h ) - m( \psi'(u), \varphi_h ) \big)
    - \big( m_h (\Pi_h w, \phi_h ) - m( w, \varphi_h ) \big) \\
    & \quad =: E_4( \phi_h ) + E_5( \phi_h ).
  \end{aligned}
\end{equation}

The quantities $E_j(\phi_h)$, for $j=1,\ldots,5$, are consistency terms involving the approximation properties of the finite element spaces and the geometric perturbation.

\begin{lemma}
  \label{lem:E-bound}
  For $\phi_h \in S_h^T$ we have
  \begin{align}
    \label{eq:E1}
    \abs{ E_1( \phi_h ) } & \le c h^2 \big( \norm{ \md u }_{H^2(\Gamma(t))}
    + \norm{ u }_{H^2(\Gamma(t))} \big) \norm{ \phi_h }_{L^2(\Gamma_h(t))} \\
    \label{eq:E2}
    \abs{ E_2( \phi_h ) } & \le c h^2 \norm{ u }_{H^2(\Gamma(t))} \norm{ \phi_h }_{L^2(\Gamma_h(t))} \\
    \label{eq:E3}
    \abs{ E_3( \phi_h ) } & \le c h^2 \norm{ u }_{L^2(\Gamma(t))} \norm{ \phi_h }_{H^1(\Gamma_h(t))},
  \end{align}
  and for $\phi_h \in \tilde{S}_h^T$:
  \begin{align}
    \label{eq:E4}
    \abs{ E_4( \phi_h ) } & \le c \frac{h^2}{\eps} \norm{ u }_{H^2(\Gamma(t))}
    \norm{ \phi_h }_{L^2(\Gamma_h(t))} \\
    \label{eq:E5}
    \abs{ E_5( \phi_h ) } & \le c h^2 \norm{ w }_{H^2(\Gamma(t))} \norm{ \phi_h }_{L^2(\Gamma_h(t))}.
  \end{align}
\end{lemma}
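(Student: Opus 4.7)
The plan is to treat each $E_j$ by the same recipe: insert and subtract the analogous quantity evaluated on $\Gamma(t)$ using the lift of the finite element function in place of $u$ (or $w$). This splits every $E_j$ into a purely \emph{geometric} piece (continuous versus discrete bilinear form applied to the same finite element pair) plus a purely \emph{approximation} piece (smooth versus Ritz-projected function on $\Gamma(t)$). The geometric pieces will be absorbed by the $O(h^2)$ bounds in \myref[Lemma]{lem:ch-geometric-bounds} together with \eqref{eq:ch-mpsi-bound} and \eqref{eq:mmd-bound}; the approximation pieces will be handled by \myref[Theorem]{thm:Pih-bound} and \myref[Lemma]{lem:md-Pih-bound}.

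For $E_1$, I write $E_1(\phi_h) = \bigl[m_h(\md_h \Pi_h u, \phi_h) - m(\md_h \pi_h u, \varphi_h)\bigr] + m(\md_h(\pi_h u - u), \varphi_h) + m(\md_h u - \md u, \varphi_h)$. The first bracket is bounded by \eqref{eq:mmd-bound} using that $\norm{\md_h \Pi_h u}_{L^2(\Gamma_h(t))}$ is controlled by $\norm{\md u}_{L^2(\Gamma(t))}$ and $\norm{u}_{H^1(\Gamma(t))}$ via a triangle inequality combined with \eqref{eq:md-Pih-bound} and \eqref{eq:93}; the second bracket is \eqref{eq:md-Pih-bound} directly; the third is \eqref{eq:93}. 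For $E_2$, the splitting $E_2(\phi_h) = \bigl[g_h(V_h;\Pi_h u,\phi_h) - g(v_h;\pi_h u,\varphi_h)\bigr] + g(v_h;\pi_h u - u,\varphi_h)$ uses \eqref{eq:ch-g-bound} together with $H^1$-stability of $\Pi_h$ for the first piece and the $L^2$ bound on $\pi_h u - u$ from \eqref{eq:104} for the second. For $E_3$, observing that $\md \varphi_h - \md_h \varphi_h = (v - v_h)\cdot \nabla_\Gamma \varphi_h$, the estimate \eqref{eq:92} immediately yields $\abs{E_3(\phi_h)} \le c h^2 \norm{u}_{L^2(\Gamma(t))} \norm{\nabla_\Gamma \varphi_h}_{L^2(\Gamma(t))}$.

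For $E_4$, I split in the same way: $E_4(\phi_h) = \frac{1}{\eps}\bigl[m_h(\psi'(\Pi_h u),\phi_h) - m(\psi'(\pi_h u),\varphi_h)\bigr] + \frac{1}{\eps}m(\psi'(\pi_h u) - \psi'(u),\varphi_h)$. The geometric piece is bounded by \eqref{eq:ch-mpsi-bound}. For the approximation piece I use that $\psi'(z) = z^3 - z$ is locally Lipschitz, giving $|\psi'(a)-\psi'(b)| \le C(1 + a^2 + b^2)|a-b|$, and then combine the $L^\infty$ bound on $\pi_h u$ from \eqref{eq:Pih-infty} with the $L^\infty$ bound on $u$ coming from the embedding $H^2 \hookrightarrow L^\infty$, together with \eqref{eq:104}. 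The resulting constant depends polynomially on $\norm{u}_{H^2(\Gamma(t))}$, absorbed into the generic $c$ (the excerpt's convention). The term $E_5$ is the simplest: splitting into $-\bigl[m_h(\Pi_h w,\phi_h) - m(\pi_h w,\varphi_h)\bigr] - m(\pi_h w - w,\varphi_h)$, we apply \eqref{eq:ch-m-bound} plus $L^2$-stability of $\Pi_h$ and \eqref{eq:104}.

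The main obstacle, as I see it, is $E_1$: controlling $\norm{\md_h \Pi_h u}_{L^2(\Gamma_h(t))}$ so that the geometric perturbation from \eqref{eq:mmd-bound} produces the clean $h^2 (\norm{u}_{H^2} + \norm{\md u}_{H^2})$ on the right-hand side rather than a higher-order polynomial or a stray $h^{-1}$. This is precisely where \myref[Lemma]{lem:md-Pih-bound} is essential: it supplies the $L^2$ bound for $\md_h(\pi_h u - u)$ that lets us split off $\md u$ cleanly and absorb the remainder using \eqref{eq:93}. Once this is done, every other bound is a direct application of the geometric estimates plus Ritz projection errors.
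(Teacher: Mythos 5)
Your proposal is correct and follows exactly the route the paper takes: the paper's own proof is the one-line observation that the lemma is "a combination of the geometric bounds from \myref[Section]{sec:geometric-estimates} and the bounds of $\Pi_h$ from \myref[Theorem]{thm:Pih-bound} and \myref[Lemma]{lem:md-Pih-bound}", and your term-by-term insert-and-subtract splittings (geometric piece via \myref[Lemma]{lem:ch-geometric-bounds}, \eqref{eq:ch-mpsi-bound}, \eqref{eq:mmd-bound}; approximation piece via \eqref{eq:104}, \eqref{eq:Pih-infty}, \eqref{eq:md-Pih-bound}, \eqref{eq:92}, \eqref{eq:93}) are precisely the intended execution. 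One harmless quibble: $E_1(\phi_h)$ as defined in \eqref{eq:theta-u} ends in $m(\md_h u,\varphi_h)$ rather than $m(\md u,\varphi_h)$, so your first two pieces already reconstitute $E_1$ and the extra term $m(\md_h u - \md u,\varphi_h)$ does not belong to it --- but since that term is itself $O(h^2)$ by \eqref{eq:93}, nothing breaks.
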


\begin{proof}
  The proof is a combination of the geometric bounds from \myref[Section]{sec:geometric-estimates} and the bounds of $\Pi_h$ from \myref[Theorem]{thm:Pih-bound} and \myref[Lemma]{lem:md-Pih-bound}. \qed
\end{proof}

\subsection{Error bounds}

In this section, we derive bounds on $\theta^u$ and $\theta^w$ based on the error equations derived in the previous section and natural energy methods for the partial differential equation system go on to show the final error estimate.

To bound $\theta^u$ and $\theta^w$ we start by testing \eqref{eq:theta-u} with $\eps \theta^u$ and \eqref{eq:theta-w} with $\theta^w$ and subtract to see that
\begin{align*}
  & \eps \dt m_h( \theta^u, \theta^u ) + m_h( \theta^w, \theta^w ) \\
  & \quad = \eps m_h( \theta^u, \md_h \theta^u ) 
  + \frac1\eps m_h( \psi'(\Pi_h u) - \psi'(U_h), \theta^w ) \\
  & \quad\qquad + E_1( \eps \theta^u ) + E_2( \eps \theta^u ) +E_3( \eps \theta^u ) - E_4( \theta^w ) - E_5( \theta^w ).
\end{align*}
Applying \myref[Lemma]{lem:E-bound} and the transport lemma \eqref{eq:mh-transport}, with the local Lipschitz property of $\psi'$, this result gives that
\begin{equation}
  \begin{aligned}
    & \frac\eps2 \dt m_h( \theta^u, \theta^u ) + m_h( \theta^w, \theta^w ) \\
    & \quad \le c \frac\eps2 \norm{ \theta^u }_{L^2(\Gamma_h(t))}^2
    + \frac1\eps \norm{ \theta^u }_{L^2(\Gamma_h(t))} \norm{ \theta^w }_{L^2(\Gamma_h(t))} \\
    & \quad\qquad + c \eps h^2 \big( \norm{ \md u }_{H^2(\Gamma(t))} + \norm{ u }_{H^2(\Gamma(t))} \big) \norm{ \theta^u }_{L^2(\Gamma_h(t))} \\
    & \quad\qquad + \frac{c h^2}{\eps} \big( \norm{ u }_{H^2(\Gamma(t))} + \norm{ w }_{H^2(\Gamma(t))} \big) \norm{ \theta^w }_{L^2(\Gamma_h(t))} \\
    & \quad\qquad + c \eps h^2 \norm{ u }_{H^2(\Gamma(t))} \norm{ \nabla_{\Gamma_h} \theta^u }_{L^2(\Gamma_h(t))}.
  \end{aligned}
\end{equation}
We apply a Young's inequality to find that
\begin{equation}
  \label{eq:theta-uw}
  \begin{aligned}
    & \eps \dt \norm{ \theta^u }_{L^2(\Gamma_h(t))}^2
    + \norm{ \theta^w }_{L^2(\Gamma_h(t))}^2 \\
    & \quad \le \frac{1}{\eps^2} \norm{ \theta^u }_{L^2(\Gamma_h(t))}^2 + c \eps \norm{ \nabla_{\Gamma_h} \theta^u }_{L^2(\Gamma_h(t))}^2 \\
    & \quad\qquad + \frac{c h^4}{\eps^2} \big( \norm{ \md u }_{H^2(\Gamma(t))}^2 + \norm{ u }_{H^2(\Gamma(t))}^2 + \norm{ w }_{H^2(\Gamma(t))}^2 \big).
  \end{aligned}
\end{equation}

Next, in order to bound the $\nabla_{\Gamma_h} \theta^u$ term in the previous equation, we test \eqref{eq:theta-w} with $\theta^u$. Using \myref[Theorem]{thm:Pih-bound} and \myref[Lemma]{lem:E-bound} and the $L^\infty$ bound on $u$ and $U_h$, we have for some $\delta > 0$,
\begin{equation}
  \label{eq:grad-theta-u}
  \begin{aligned}
    \eps a_h( \theta^u, \theta^u ) 
    & = m( \theta^w, \theta^u ) - \frac1\eps m_h( \psi'( \Pi_h u) - \psi'(U_h), \theta^u )
    + E_4( \theta^u ) + E_5( \theta^u ) \\
    & \le c \frac1\eps \norm{ \theta^u }_{L^2(\Gamma_h(t))}^2 + \norm{ \theta^w }_{L^2(\Gamma_h(t))} \norm{ \theta^u }_{L^2(\Gamma_h(t))} \\
    & \qquad + c \frac{h^2}{\eps} \big( \norm{ u }_{H^2(\Gamma(t))}
    + \norm{ w }_{H^2(\Gamma(t))} \big) \norm{ \theta^u }_{L^2(\Gamma_h(t))} \\
    & \le c \frac1\eps \norm{ \theta^u }_{L^2(\Gamma_h(t))}^2 + \delta \norm{ \theta^w }_{L^2(\Gamma(t))}^2 \\
    & \qquad + c \frac{h^4}{\eps^2} \big( \norm{ u }_{H^2(\Gamma(t))}^2 + \norm{ w }_{H^2(\Gamma(t))}^2 \big).
    \end{aligned}
\end{equation}
Applying this bound in the right-hand side of \eqref{eq:theta-uw}, we may choose $\delta$ small enough so that
\begin{equation}
  \begin{aligned}
    & \eps \dt \norm{ \theta^u }_{L^2(\Gamma_h(t))}^2
    + \norm{ \theta^w }_{L^2(\Gamma_h(t))}^2 \\
    & \quad \le c \frac1\eps \norm{ \theta^u }_{L^2(\Gamma_h(t))}^2 
    + c \frac{h^4}{\eps^2} \big( \norm{ \md u }_{H^2(\Gamma(t))}^2
    + \norm{ u }_{H^2(\Gamma(t))}^2 + \norm{ w }_{H^2(\Gamma(t))}^2 \big).
  \end{aligned}
\end{equation}

We recall from \eqref{eq:115}: $U_{h,0} = \Pi_h u_0$, hence we have that $\theta^u|_{t=0} = \Pi_h u_0 - U_{h,0} = 0$. Applying a Gronwall inequality and integrating in time gives the following bounds on $\theta^u$ and $\theta^w$:
\begin{equation}
  \eps \sup_{t \in (0,T)} \norm{ \theta^u }_{L^2(\Gamma_h(t))}^2
  + \int_0^T \norm{ \theta^w }_{L^2(\Gamma_h(t))}^2 \dd t
  \le C  h^4,
\end{equation}
with $C = C( u, w, \eps, T )$ given by
\begin{equation*}
  C =  c_\eps \int_0^T \big(  \norm{ \md u }_{H^2(\Gamma(t))}^2
  + \norm{ u }_{H^2(\Gamma(t))}^2 + \norm{ w }_{H^2(\Gamma(t))}^2 \big) \dd t.
\end{equation*}

\begin{proof}[Proof of Theorem~\ref{thm:ch-error}]
  The previous bound can then be combined with the bounds on $\rho^u$ and $\rho^w$ from \myref[Theorem]{thm:Pih-bound} to give the $L^2$ error \eqref{eq:3}. One can also apply an inverse inequality to derive gradient bounds on $\theta^u$ and $\theta^w$ to give the $H^1$ error bound \eqref{eq:1}.\qed
\end{proof}

%--- numerical results ------------------------------------------------------%

\section{Numerical results}

The above finite element method discretised in time using semi-implicit time stepping. Given $U_0$ and a partition of time $0 = t_0, t_1, \ldots,t_M = T$, for $k=0,\ldots,M-1$, we find $(U_{k+1}, W_{k+1})$ as the solution the matrix system
\begin{align*}
  \M(t_{k+1}) U_{k+1} + ( t_{k+1} - t_k ) \S( t_{k+1} ) W_{k+1} & = \M(t_k) U_k \\
  \eps \S( t_{k+1} ) U_{k+1} - \M( t_{k+1} ) W_{k+1} & =  - \frac1\eps \Psi( U_{k} ).
\end{align*}
Full analysis of the fully discrete problem is left to future work. Based on ideas from \cite{DziEll12}, we expect stability subject to $\tau < \eps$ and convergence rate order $\tau + h^2$ for the discrete version of the norms in \myref[Theorem]{thm:ch-error}.

The method was implemented using the \textsf{ALBERTA} finite element toolbox \cite{SchSie05} and the full block linear system solved using a direct solver.

\subsection{Fourth-order linear problem}

We start by showing the derived orders of convergence can be achieved for a fourth order linear problem. We calculate with $\psi \equiv 0$ and choose $\eps = 0.1$. We couple $\tau \approx h^2$ to ensure we see the full order of convergence. The surface is given by $\Gamma(t) = \{ x \in \R^3 : \Phi(x,t)= 0\}$ with
\begin{equation}
  \label{eq:dziuk-sphere}
  \Phi(x,t) = \frac{x_1^2}{a(t)} + x_2^2 +x_3^2 - 1.
\end{equation}
We have chosen $a(t) = 1.0 + 0.25 \sin( 10 \pi t)$ and solve for $t \in (0,0.1)$. The exact solution is given by $u(x,t) = e^{-6t} x_1 x_2$, where right hand side $f$ is calculated from
\begin{equation*}
  f = u_t + v \cdot \nabla u + u \nabla_\Gamma \cdot v + \eps \Delta_\Gamma^2 u.
\end{equation*}
The convergence is shown in \myref[Table]{tab:ch-fourth-order} for the errors in the $L^2$ norm. The experimental order of convergence (eoc) is calculated via the formula \eqref{eq:eoc}: Given an error $E_i$ and $E_{i-1}$ at two different mesh sizes $h_i$ and $h_{i-1}$, we calculate the experimental order of convergence (eoc) by
\begin{equation}
  \label{eq:eoc}
  \mathrm{(eoc)}_i = \frac{ \log( E_i / E_{i-1} ) }{ \log( h_i / h_{i-1} ) }.
\end{equation}
The results for the $H^1$ norm are not shown here, however we observe first order convergence in $h$.

\begin{table}[tbh]
  \begin{center}

% u - u_h

  \begin{tabular}{ccc}
    \hline
    $h$ & $\norm{ u^{-\ell} - U_h }_{L^2(\Gamma_h(T))}$ & (eoc)  \\ 
    \hline
    $5.564983 \cdot 10^{-1}$ & $9.424750 \cdot 10^{-3}$ & ---  \\
    $2.866409 \cdot 10^{-1}$ & $3.001764 \cdot 10^{-3}$ & $1.724571$ \\
    $1.443332 \cdot 10^{-1}$ & $8.068147 \cdot 10^{-4}$ & $1.914955$ \\
    $7.229393 \cdot 10^{-2}$ & $2.033971 \cdot 10^{-4}$ & $1.993007$ \\
    \hline
  \end{tabular}

    \vspace{0.5cm}

% w - w_h

  \begin{tabular}{ccc}
    \hline
    $h$ & $\norm{ w^{-\ell} - W_h }_{L^2(\Gamma_h(T))}$ & (eoc) \\ 
    \hline
    $5.564983 \cdot 10^{-1}$ & $4.796888 \cdot 10^{-3}$ & ---  \\
    $2.866409 \cdot 10^{-1}$ & $1.432177 \cdot 10^{-3}$ & $1.821993$ \\
    $1.443332 \cdot 10^{-1}$ & $3.824468 \cdot 10^{-4}$ & $1.924429$ \\
    $7.229393 \cdot 10^{-2}$ & $9.651516 \cdot 10^{-5}$ & $1.991496$ \\
    \hline
  \end{tabular}

    \caption[Convergence for fourth-order linear problem]{Error table of the solution of a fourth-order linear problem with surface defined by \eqref{eq:dziuk-sphere}.}
    \label{tab:ch-fourth-order}
  \end{center}
\end{table}

\subsection{Cahn-Hilliard equation on a periodically evolving surface}

In this example, we consider the same surface as above but now with the full non-linearity as considered in the above analysis over the time interval $t \in (0,0.8)$.

The initial condition for the simulations was the interpolant of a small perturbation about zero given by
\begin{equation*}
  u_0 (x,y,z) = 0.1 \cos( 2 \pi x ) \cos( 2 \pi y ) \cos( 2 \pi z ).
\end{equation*}

We present two plots to show the behaviour of the numerical solution. First, in \myref[Figure]{fig:ch-conv}, we see that for short times we have good convergence of the solution. The second, \myref[Figure]{fig:ch-energy}, demonstrates that the energy does not decrease monotonically along solutions. Running for a longer time suggests that the solution converges to a time periodic solution. We show a plot of the solution at level 2 at different times in \myref[Figure]{fig:ch-dziuk}. The system is solved with a fixed time step of $10^{-4}$.

\begin{figure}[t]
  \centering
  \includegraphics{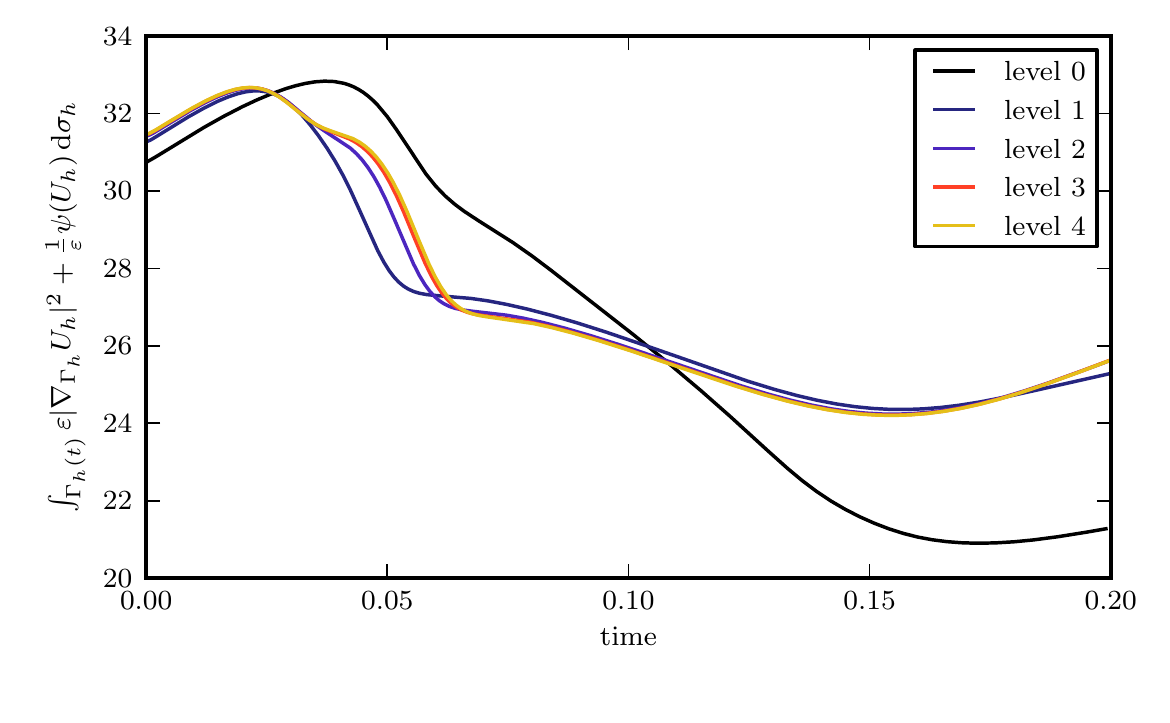}
  \caption[Convergence in energy for Cahn-Hilliard on an evolving surface]{A plot of the Ginzburg-Landau energy over five levels of refinement.}
  \label{fig:ch-conv}
\end{figure}

\begin{figure}[t]
  % column width: 338.0pt
  \centering
  \includegraphics{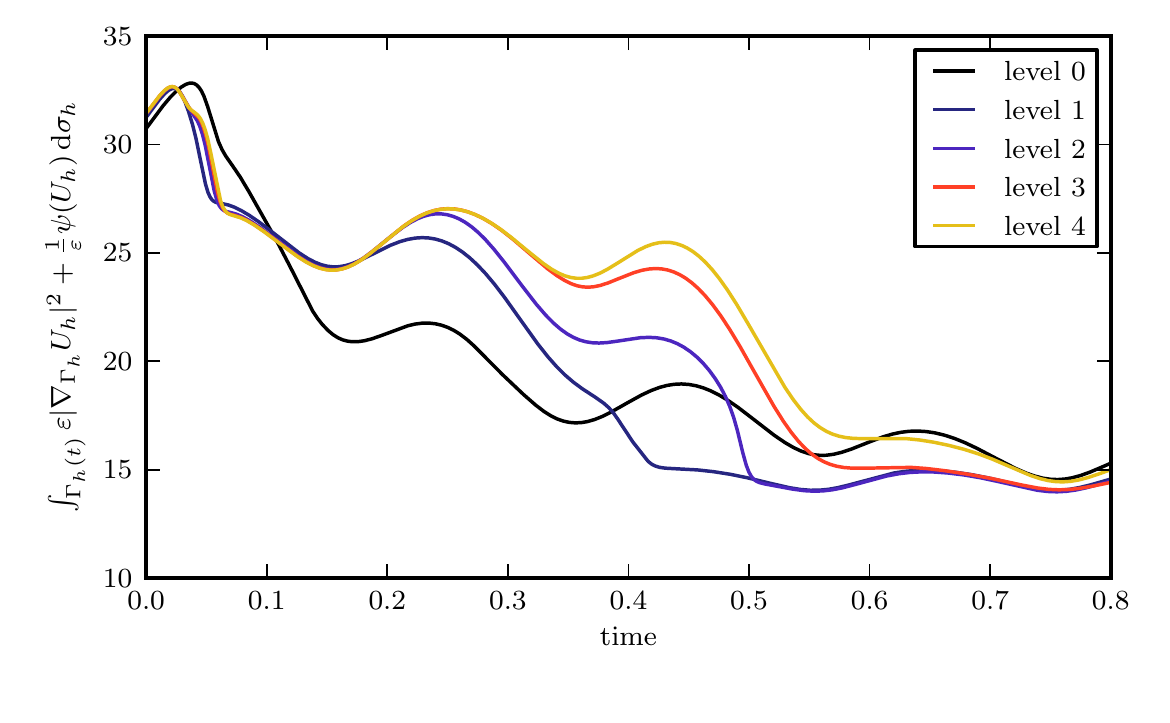}
  \caption[The decrease in energy for Cahn-Hilliard on an evolving surface]{A plot of the Ginzburg-Landau energy over five levels of refinement over a longer time interval.}
  \label{fig:ch-energy}
\end{figure}

\begin{figure}[p]
  \centering
  
  \begin{subfigure}[c]{0.3\textwidth}
    \centering
    \includegraphics[width=\textwidth]{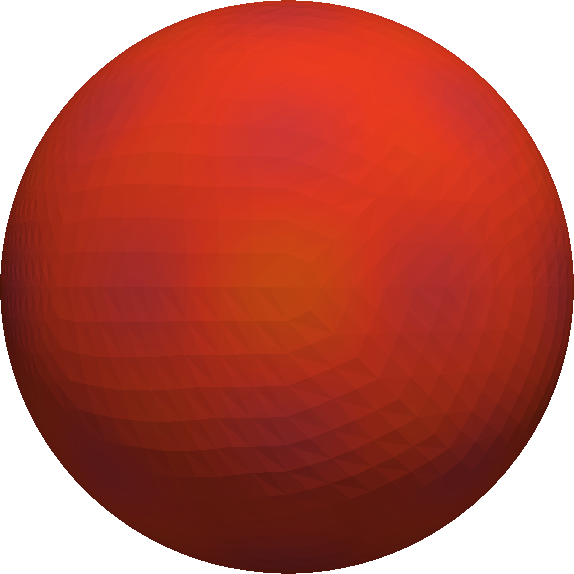}
  \end{subfigure}
  \quad
  \begin{subfigure}[c]{0.3\textwidth}
    \includegraphics[width=\textwidth]{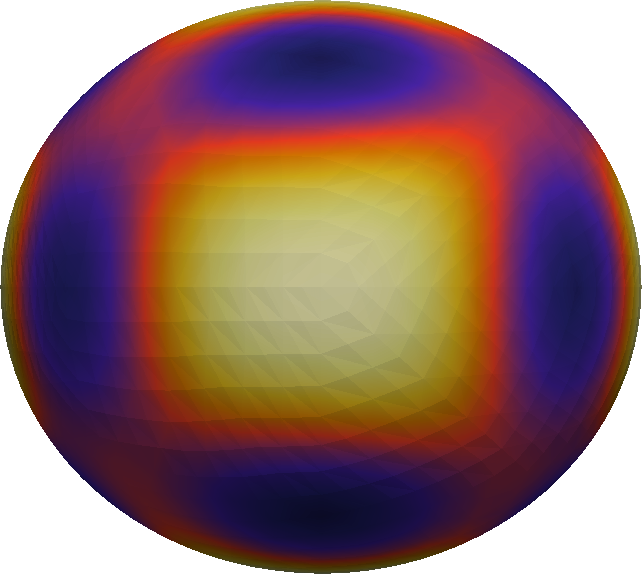}
  \end{subfigure}
  \quad
  \begin{subfigure}[c]{0.3\textwidth}
    \includegraphics[width=\textwidth]{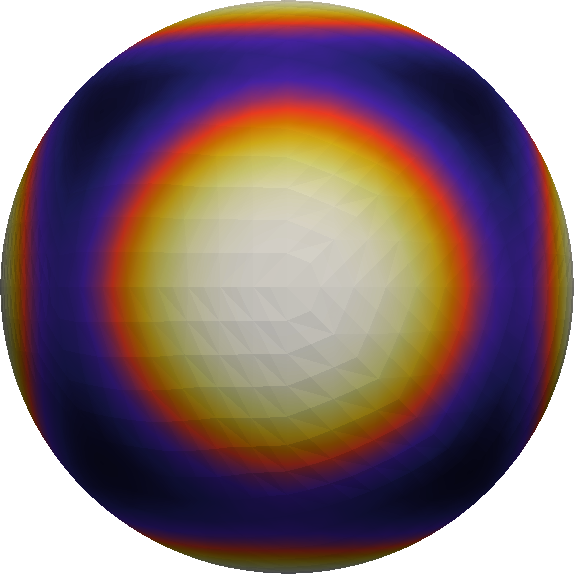}
  \end{subfigure}

  \vfill

  \begin{subfigure}[c]{0.3\textwidth}
    \includegraphics[width=\textwidth]{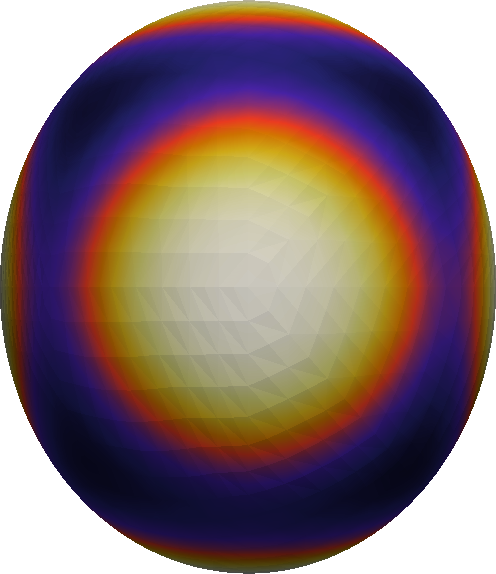}
  \end{subfigure}
  \quad
  \begin{subfigure}[c]{0.3\textwidth}
    \includegraphics[width=\textwidth]{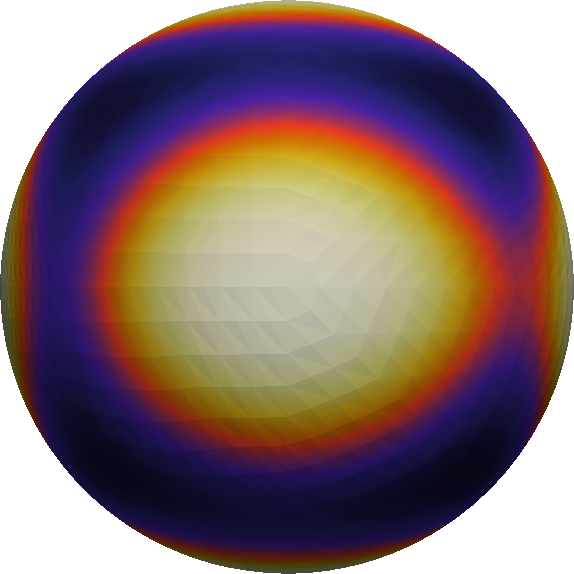}
  \end{subfigure}
  \quad
  \begin{subfigure}[c]{0.3\textwidth}
    \includegraphics[width=\textwidth]{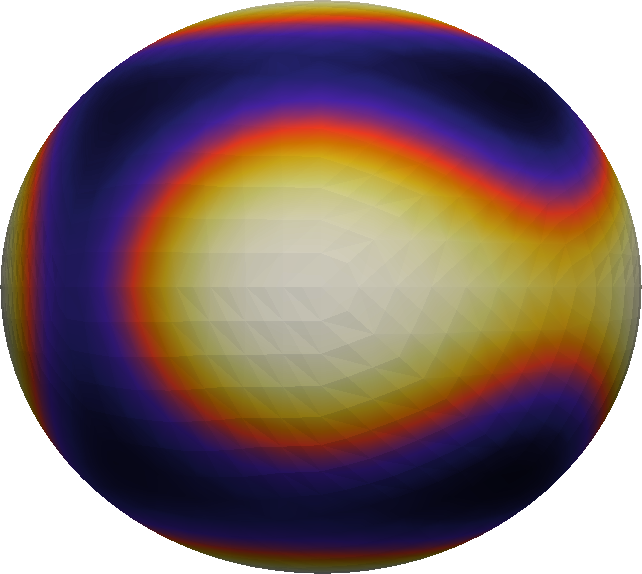}
  \end{subfigure}

  \vfill

  \begin{subfigure}[c]{0.3\textwidth}
    \includegraphics[width=\textwidth]{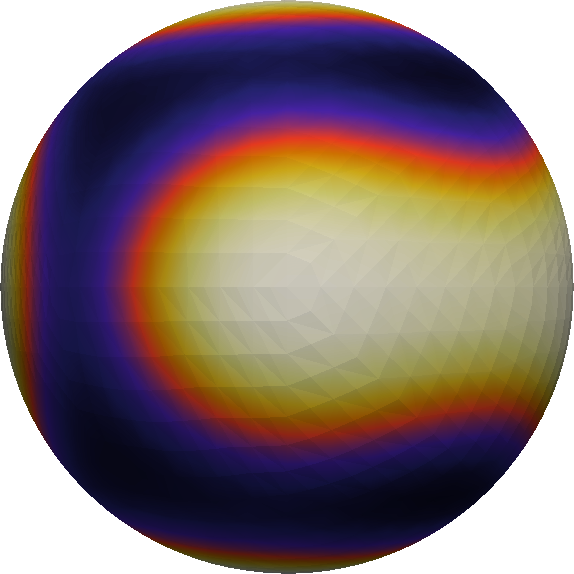}
  \end{subfigure}
  \quad
  \begin{subfigure}[c]{0.3\textwidth}  
    \includegraphics[width=\textwidth]{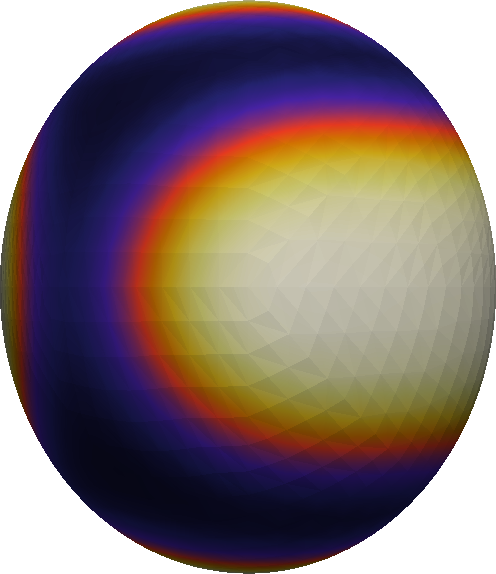}
  \end{subfigure}
  \quad
  \begin{subfigure}[c]{0.3\textwidth}
    \includegraphics[width=\textwidth]{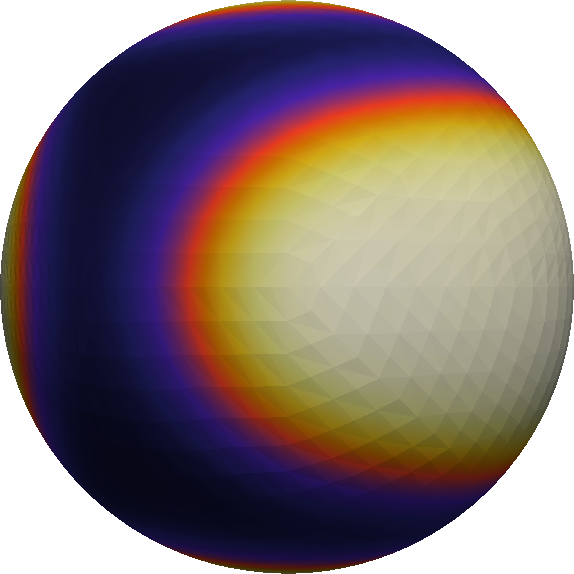}
  \end{subfigure}
  
  \vfill

  \begin{subfigure}[c]{0.3\textwidth}
    \includegraphics[width=\textwidth]{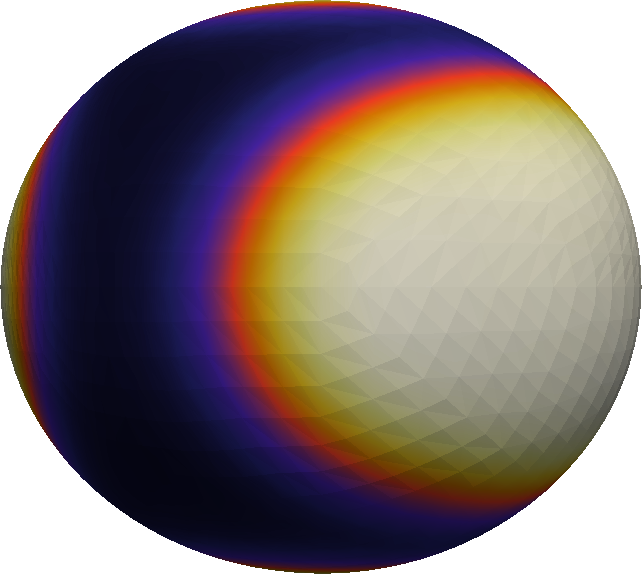}
  \end{subfigure}
  \quad
  \begin{subfigure}[c]{0.3\textwidth}
    \includegraphics[width=\textwidth]{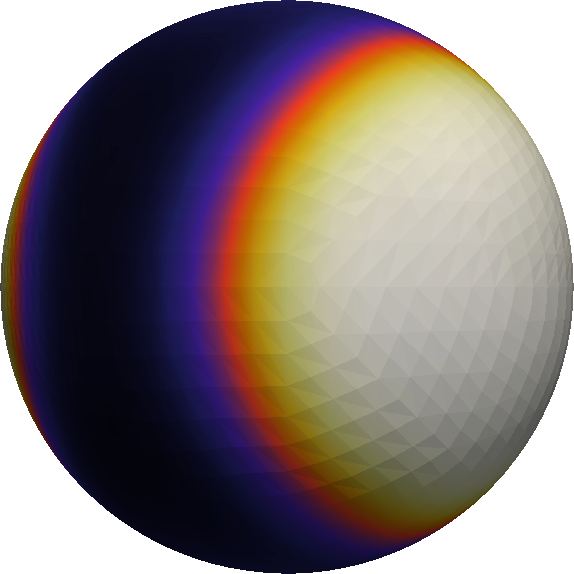}
  \end{subfigure}
  \quad
  \begin{subfigure}[c]{0.3\textwidth}
    \includegraphics[width=\textwidth]{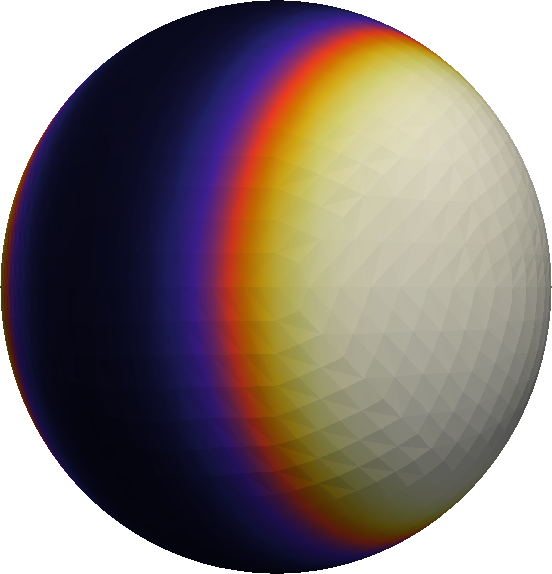}
  \end{subfigure}
  
  \caption[Plot of Cahn-Hilliard solution]{Plot of the solution of the Cahn-Hilliard equation at level two for time $t = 0.0, 0.05, 0.1, 0.15, 0.2, 0.25, 0.3, 0.35, 0.4, 0.45, 0.5, 1.0$. The colour scheme represents values between $-1$ and $1$.}
  \label{fig:ch-dziuk}
\end{figure}

\subsection{An example with tangential motion}

We show the flexibility of the method with an other example with larger surface deformation and tangential motion. The initial condition is taken to be a small random perturbation about zero. 

We take a surface given by the level set function
\begin{equation}
  \label{eq:ch-styles}
  \Phi(x,t) = x_1^2 + x_2^2 + a(t)^2 G( x_3^2 / L(t) ) - a(t)^2,
\end{equation}
where
\begin{align*}
  G(s) & = 200 s ( s - 199 / 100 ) \\
  a(t) & = 0.1 + 0.05 \sin( 2 \pi t ) \\
  L(t) & = 1 + 0.2 \sin( 4 \pi t ).
\end{align*}
In addition, we will prescribe a tangential velocity so that we will consider points moving according to
\begin{align*}
  X(t) = \left( X_1(0) \frac{a(t)}{a(0)},  X_2(0) \frac{a(t)}{a(0)},  X_3(0) \frac{L(t)}{L(0)} \right).
\end{align*}
We plot the solution at different times in \myref[Figure]{fig:ch-styles}. In particular, we notice that under this flow the nodes remain uniformly distributed.

\begin{figure}[p]
  \centering
  
  \begin{subfigure}[c]{0.30\textwidth}
    \includegraphics[width=\textwidth]{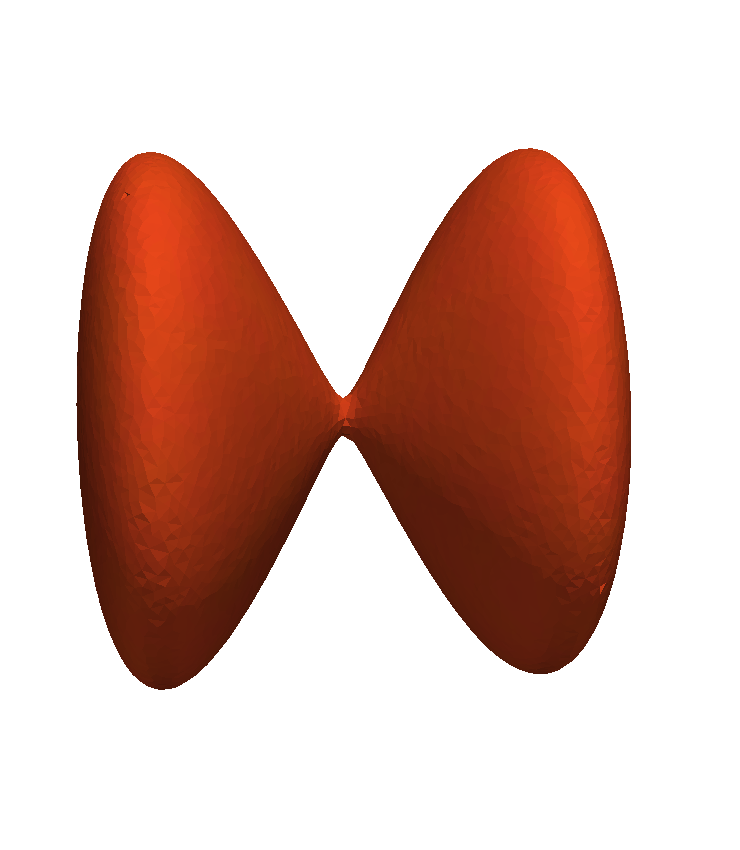}
  \end{subfigure}
  \qquad
  \begin{subfigure}[c]{0.30\textwidth}
    \includegraphics[width=\textwidth]{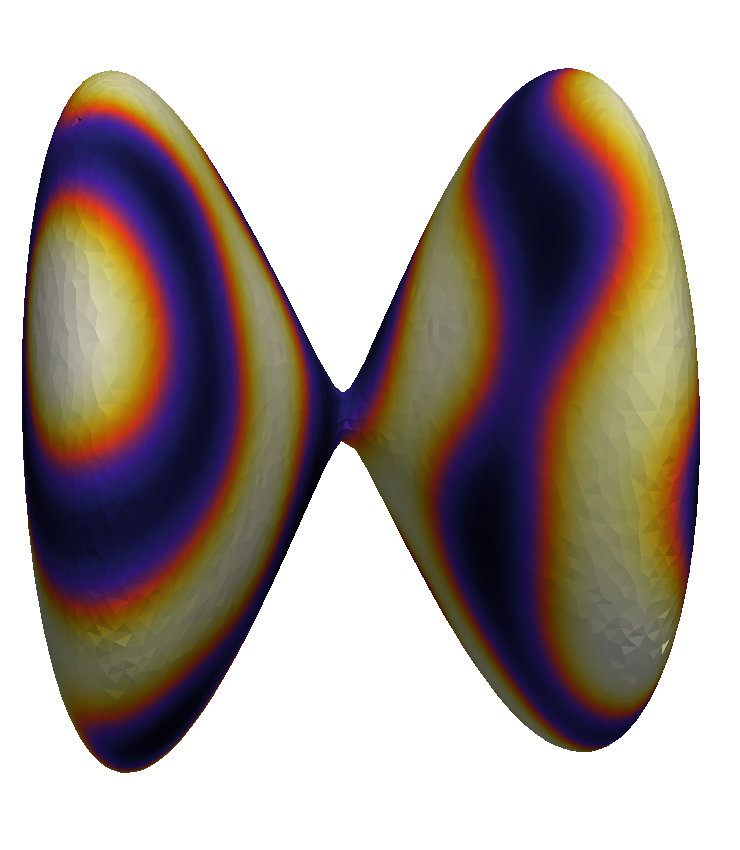}
  \end{subfigure}

  \begin{subfigure}[c]{0.30\textwidth}
    \includegraphics[width=\textwidth]{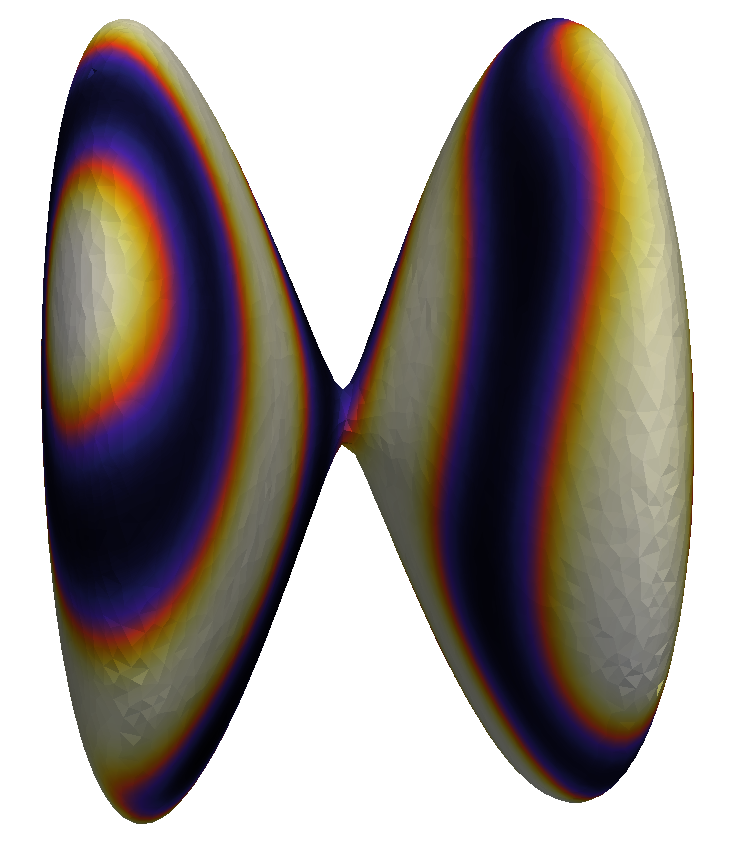}
  \end{subfigure}
  \qquad
  \begin{subfigure}[c]{0.30\textwidth}
    \includegraphics[width=\textwidth]{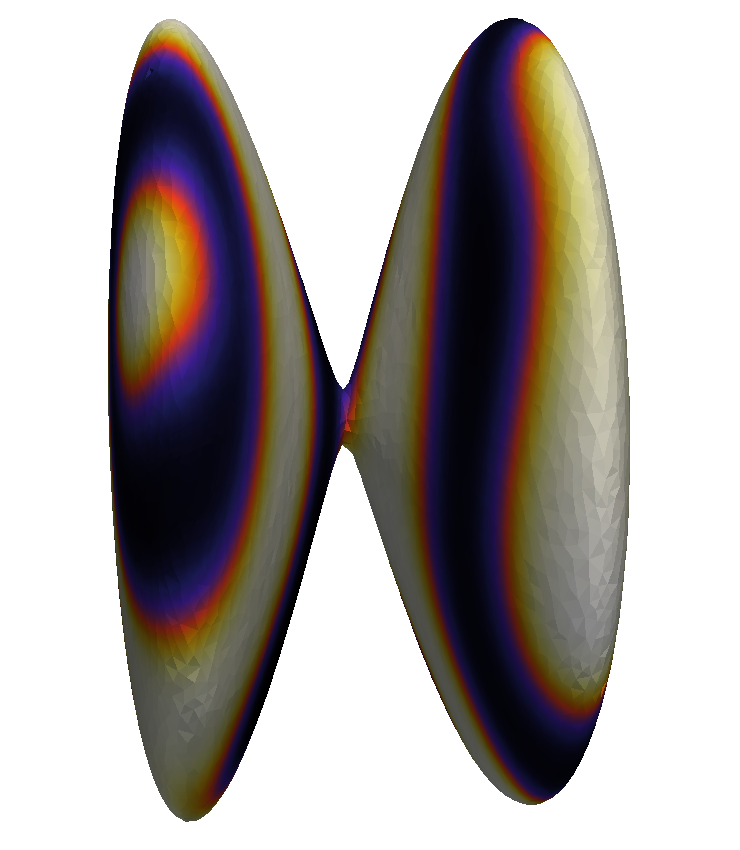}
  \end{subfigure}

  \begin{subfigure}[c]{0.30\textwidth}
    \includegraphics[width=\textwidth]{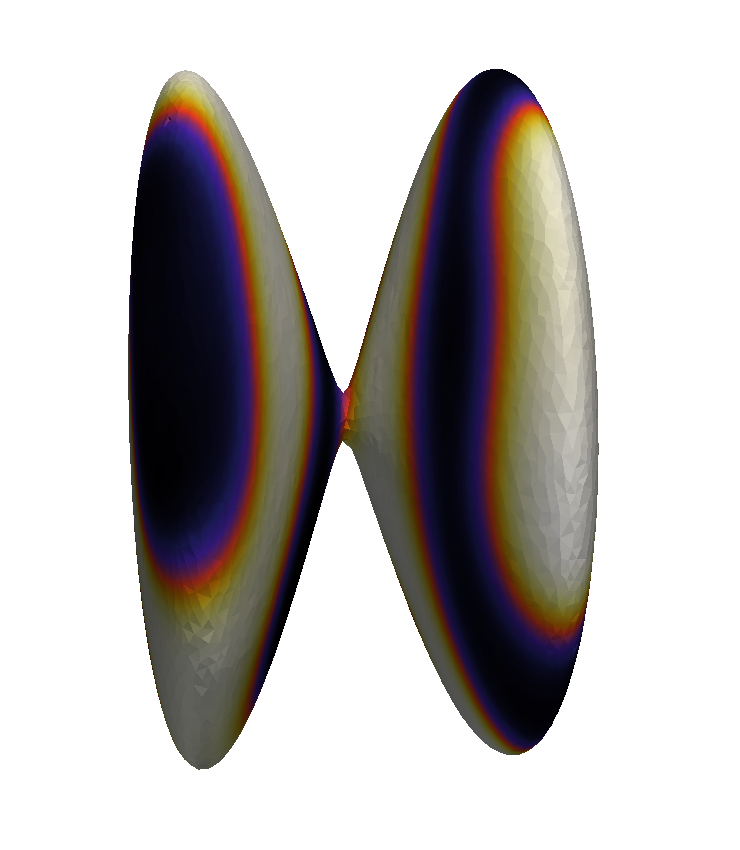}
  \end{subfigure}
  \qquad
  \begin{subfigure}[c]{0.30\textwidth}
    \includegraphics[width=\textwidth]{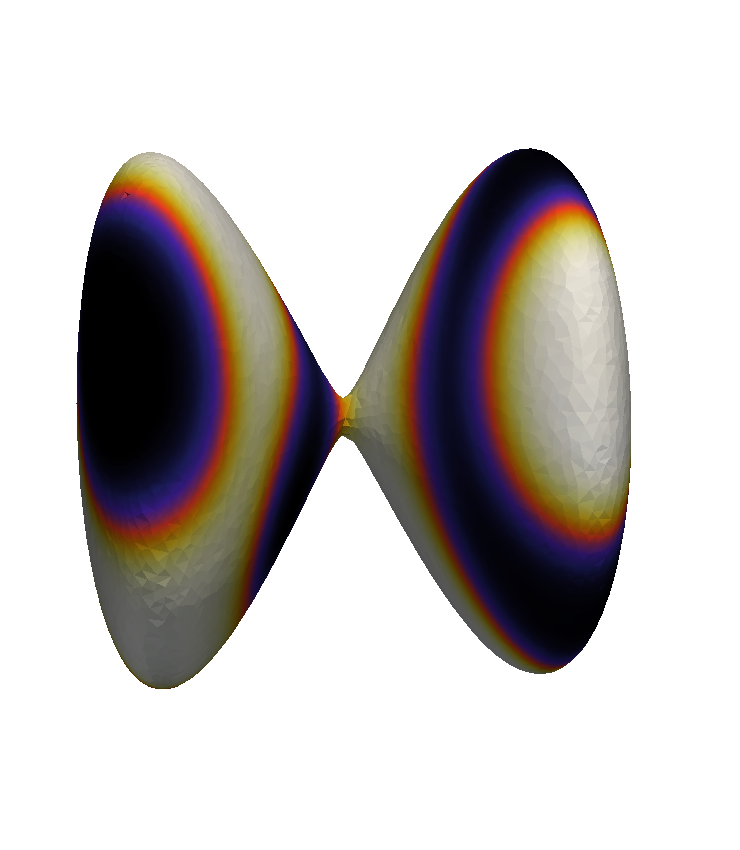}
  \end{subfigure}

  \begin{subfigure}[c]{0.30\textwidth}
    \includegraphics[width=\textwidth]{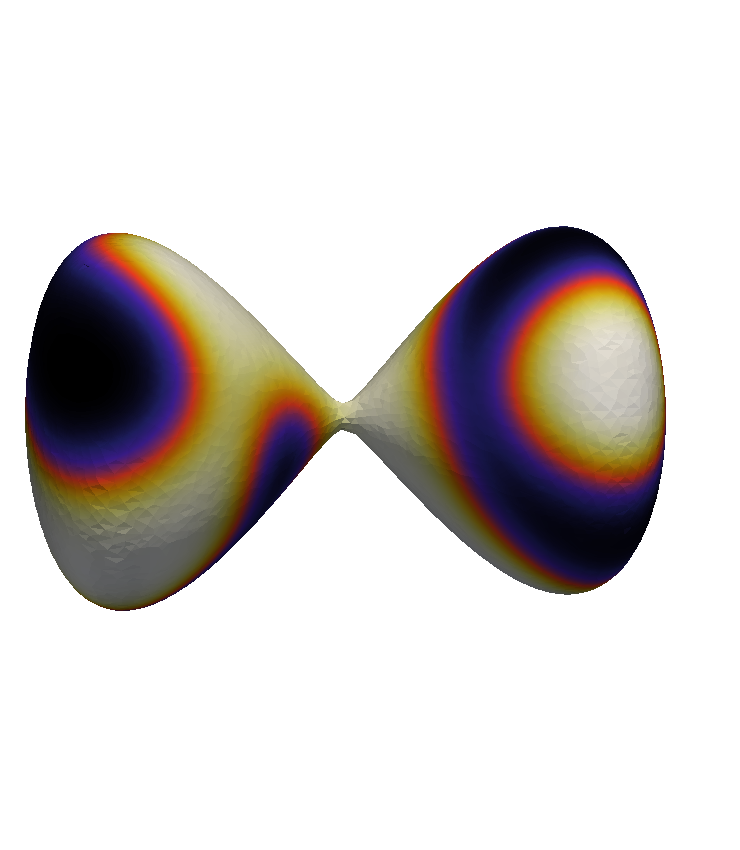}
  \end{subfigure}

  \caption[Example of Cahn-Hilliard on a evolving surface]{Plot of the solution on the surface defined by \eqref{eq:ch-styles} at times $t = 0, 0.1, 0.2, 0.3, 0.4, 0.5, 0.6$.}
  \label{fig:ch-styles}
\end{figure}

\begin{acknowledgements}
  The authors would like to thank Andrew Stuart and Endre S\"{u}lli for thoughtful comments and discussion which have improved this work greatly.
\end{acknowledgements}

\end{document}